\newtheorem{theorem}{Theorem}[section]
\newtheorem{lemma}[theorem]{Lemma}
\newtheorem{proposition}[theorem]{Proposition}
\newtheorem{corollary}[theorem]{Corollary}
\theoremstyle{definition}
\newtheorem{definition}[theorem]{Definition} 
\newtheorem{remark}[theorem]{Remark}
\theoremstyle{remark}
\newcommand{\C}{\mathbb{C}}
\newcommand{\R}{\mathbb{R}}
\newcommand{\N}{\mathbb{N}}
\newcommand{\Z}{\mathbb{Z}}
\newcommand{\SO}{\textnormal{\textbf{S}}_{0}}
\newcommand{\ghat}{\widehat{G}}
\newcommand{\s}{\textnormal{s}}
\newcommand{\NumWin}{n}
\newcommand{\NumDim}{d}
\newcommand{\La}{\Lambda}
\newcommand{\la}{\lambda}
\newcommand{\Lac}{\Lambda^\circ}
\newcommand{\lac}{\lambda^\circ}
\newcommand{\hhat}{\widehat{H}}
\newcommand{\restrict}[1]{\mathcal{R}_{#1}}
\newcommand{\period}[1]{\mathcal{P}_{#1}}
\newcommand{\cocy}{\textsf{c}}
\newcommand{\tr}{\textnormal{tr}}
\newcommand{\lmodule}{\mathcal{A}}
\newcommand{\rmodule}{\mathcal{B}}
\DeclareBoldMathCommand\boldlangle{\langle} 
\DeclareBoldMathCommand\boldrangle{\rangle}
\newcommand{\lhs}[2]{\prescript{}{\lmodule}{\boldlangle #1,#2\boldrangle}}
\newcommand{\rhs}[2]{{\boldlangle #1,#2\boldrangle}_{\!\rmodule}}
\newcommand{\glhs}[3]{\prescript{}{#3}{\boldlangle #1,#2\boldrangle}}
\newcommand{\grhs}[3]{{\boldlangle #1,#2\boldrangle}_{\!#3}}
\newcommand{\gmlhs}[3]{\prescript{}{#3}{{\textnormal{\textbf{[}}} #1,#2 \textnormal{\textbf{]}}}}
\newcommand{\gmrhs}[3]{ \textnormal{\textbf{[}} #1, #2 {\textnormal{\textbf{]}}}_{#3}}
\newcommand{\mvfun}[3]{
\ifthenelse{\equal{#2}{}}
	{\ifthenelse{\equal{#3}{}}  
    	{#1_{\bullet,\bullet}}{  
    	{#1_{\bullet,#3}}
    }}{ 
    {\ifthenelse{\equal{#3}{}}  
    	{#1_{#2,\bullet}}
    {  
    	{#1_{#2,#3}}}}}
} 
\newcommand{\vvfun}[2]{
\ifthenelse{\equal{#2}{}}
	{#1_{\bullet}}
  {#1_{#2}}
} 
\title{Sampling and periodization of generators of Heisenberg modules}
\author{Mads S.\ Jakobsen\thanks{Norwegian University of Science and Technology, Department of Mathematical Sciences, Trondheim, Norway, \mbox{E-mail: \protect\url{mads.jakobsen@ntnu.no}; \protect\url{franz.luef@ntnu.no}}}, Franz Luef\footnotemark[1]}
\begin{document}
\date{}
\maketitle

\begin{abstract}
This paper considers generators of Heisenberg modules in the case of twisted group $C^*$-algebras of closed subgroups of locally compact abelian groups and how the restrction and/or periodization of these generators yield generators for other Heisenberg modules. Since generators of Heisenberg modules are exactly the generators of (multi-window) Gabor frames, our methods are going to be from Gabor analy\-sis. In the latter setting the procedure of restriction and periodization of generators is well known. Our results extend this established part of Gabor analy\-sis  to the general setting of locally compact abelian groups. We give several concrete examples where we demonstrate some of the consequences of our results. Finally, we show that vector bundles over an irrational noncommutative torus may be approximated by vector bundles for finite-dimensional matrix algebras that converge to the irrational noncommutative torus with respect to the module norm of the generators, where the matrix algebras converge in the quantum Gromov-Hausdorff distance to the irrational noncommutative torus. 

\end{abstract}

\section{Introduction}

As shown in detail in \cite{jalu18} and \cite{lu11}, the construction of dual (multi-window) Gabor frame generators is equivalent to the construction of (matrix-valued) idempotent elements in twisted group $C^{*}$-algebras for closed subgroups of phase spaces represented by the Schr\"odinger representation of the Heisenberg group (as a special case we find the non-commutative tori generated by the translation and the modulation operator \cite{lu09,ri88}). Due to the mentioned equivalence, we present our results in a way that is understandable by members of both communities.

In the language of Gabor frames we generalize results on the sampling and periodization of dual Gabor frames generators developed in \cite{ja97-1,ka05,so07} from the Euclidean setting to the general setting of locally compact abelian (LCA) groups as well as to the multi-window case. For the Euclidean case our results widen the known theory, as the here developed results can also handle time-frequency shifts that come from general (not necessarily separable) subgroups of the time-frequency plane. The results given here also cover more abstract cases, e.g., sampling and periodization of Gabor frames for the square integrable functions over the adeles and over $\mathbb{Q}_{p}\times\R$ as constructed in \cite{enjalu18}. Furthermore, the results here can be applied to sample and periodize super (also known as vector valued) Gabor frames as well.

Concerning the Heisenberg modules, the results presented here and even those of \cite{ja97-1,ka05,so07} are completely new and have not been observed in the setting of the non-commutative geometry before. 
To give a good picture of what the results are, let us state a particular version of the known theory from \cite{so07} for the non-commutative torus $\mathcal{A}_{\theta}$.
We assume that $\theta$ is such that $\theta = a/M=b/N$ for some $a,b,M,N\in\N$ and take $d=aN$. The pre-$C^{*}$-algebra to this  $\mathcal{A}_{\theta}$ we realize as three different Banach algebras of operators. Specifically, we realize them as samples of the Schr\"odinger representations of the Heisenberg group that act on $L^{2}(\R)$, $\ell^{2}(a^{-1} \Z)$ and $\ell^{2}(\Z_{d})$, where $\Z_{d}=\Z/d \Z\cong\{0,1,\ldots,d-1\}$. Clearly, $\ell^{2}(\Z_{d}) \cong \C^{d}$. For convenience, we denote the algebras by $\lmodule_{\theta}^{\R}$, $\lmodule^{a^{-1} \Z}_{\theta}$ and $\lmodule^{\Z_d}_{\theta}$.
They are generated by the following unitary operators respectively,
\begin{align*} &U_{\R}f(t)  
= e^{2\pi i \theta t} f(t), \ \ V_{\R}f(t) 
= f(t-1), \ \ f\in L^{2}(\R), \ t\in\R,\\ 
&U_{a^{-1} \Z}f(t) 
= e^{2\pi i \theta t} f(t), \ \ V_{a^{-1}\Z}f(t) 
= f(t-1), \ \ f\in \ell^{2}(a^{-1}\Z), \ t\in a^{-1}\Z,\\ 
& U_{\Z_{d}} f(t) = e^{2\pi i b t/d} f(t), \ \ V_{\Z_{d}}f(t)= f(t-a), \ \ f\in\ell^{2}(\Z/d\Z), \ t\in \{0,1,\ldots,d-1\}.
\end{align*}
Observe that 
\[ U_{\R}V_{\R} = e^{2\pi i \theta} V_{\R} U_{\R}, \ \ U_{a^{-1}\Z}V_{a^{-1}\Z} = e^{2\pi i \theta} V_{a^{-1}\Z} U_{a^{-1}\Z}, \ \ U_{\Z_{d}}V_{\Z_d} = e^{2\pi i \theta} V_{\Z_d} U_{\Z_d}.\]
So $\lmodule_{\theta}^{\R}$, $\lmodule_{\theta}^{\R}$ and $\lmodule_{\theta}^{\R}$ are realizations of the non-commutative torus with parameter $\theta$.
To be consistent with later notation, we shall not so much use the operators $U$ and $V$ but rather the time-frequency shift operator $\pi$, defined as follows: 
\begin{enumerate}
    \item[(i)] For $(x,\omega)\in \R^{2}$ and $f\in L^{2}(\R)$ we define $\pi(x,\omega) f(t) = e^{2\pi i \omega t} f(t-x)$, $t\in\R$. Note that  $\pi(1,\theta)=U_{\R}V_{\R}$.
    \item [(ii)] For $(x,\omega)\in a^{-1}\Z\times[0,a)$ and $f\in \ell^{2}(a\Z)$ we define $\pi(x,\omega) f(t) = e^{2\pi i \omega t} f(t-x)$, $t\in a^{-1}\Z$. Note that $\pi(1,\theta) = U_{a^{-1}\Z} V_{a^{-1}\Z}$.
    \item[(iii)] For $(x,\omega)\in\Z_{d}\times\Z_{d}$ and $f\in \ell^{2}(\Z_{d})$ we define $\pi(x,\omega) f(t) = e^{2\pi i \omega t / d} f(t-x)$. Note that $\pi(a,b)=U_{\Z_{d}}V_{\Z_{d}}$. 
    
\end{enumerate}
From the time-frequency shifts we construct the following spaces,
\begin{align*}
    & \lmodule^{\R}_{\theta} = \big\{ {\bf a} \in \mathsf{B}(L^{2}(\R)) \, : \, {\bf{a}} =  \sum_{n,m\in\Z} \mathsf{a}(n,m) \, \pi(n,\theta m) \, , \ \mathsf{a}\in \ell^{1}(\Z^{2})\big\}, \\
    & \lmodule^{a^{-1}\Z}_{\theta} = \big\{ {\bf a} \in \mathsf{B}(\ell^{2}(a^{-1}\Z)) \, : \, {\bf{a}} =  \sum_{n\in\Z} \sum_{m=0}^{M-1} \mathsf{a}(n,m) \, \pi(n,\theta m) \, , \ \mathsf{a}\in \ell^{1}(\Z\times\Z_{M})\big\}, \\
    & \lmodule^{\Z_d}_{\theta} = \big\{ {\bf a} \in \mathsf{B}(\ell^{2}(\Z_{d})) \, : \, {\bf{a}} =   \sum_{n=0}^{N-1}\sum_{m=0}^{M-1}  \mathsf{a}(n,m) \, \pi(na,mb) \, , \ \mathsf{a}\in \ell^{1}(\Z_{N}\times\Z_{M})\big\}.
\end{align*}
The norm $\Vert \mathbf{a} \Vert = \Vert \mathsf{a} \Vert_{1}$ turns each of them into a Banach algebra with respect to composition of operators and the taking of $L^{2}$-adjoints.

For functions in \emph{Feichtinger's algebra} $\SO(\R)$ (see Section \ref{sec:fei-alg}), sequences in $\ell^{1}(a^{-1}\Z)$, and vectors in $\C^{d}$ we define a respective $\lmodule_{\theta}$-valued inner-product in the following way:
\begin{align*} \glhs{\,\cdot\,}{\,\cdot\,}{\R} & : \SO(\R)\times \SO(\R) \to \lmodule^{\R}_{\theta}, \ \glhs{f}{g}{\R} = \sum_{n,m\in\Z} \langle f, \pi(n,\theta m) g \rangle \, \pi(n,\theta m), \\
\glhs{\,\cdot\,}{\,\cdot\,}{a^{-1}\Z} & : \ell^{1}(a^{-1} \Z)\times \ell^{1}(a^{-1}\Z) \to \lmodule^{a^{-1}\Z}_{\theta}, \ \glhs{f}{g}{a^{-1}\Z} = \sum_{n\in\Z}\sum_{m=0}^{M-1} \langle f, \pi(n,\theta m) g \rangle \, \pi(n,\theta m) ,\\
\glhs{\,\cdot\,}{\,\cdot\,}{\Z_d} & : \C^{d}\times \C^{d} \to \lmodule^{\Z_d}_{\theta}, \ \glhs{f}{g}{\Z_d} = \sum_{n=0}^{N-1}\sum_{m=0}^{M-1} \langle f, \pi(na,mb) g \rangle \, \pi(na,mb),
\end{align*}
The un-annotated inner products are the usual ones on the Hilbert space $L^{2}$: for $f,g\in L^{2}(\R)$ (and particular for functions in $\SO(\R)$) we have $\langle f,g\rangle = \int_{R} f(t) \, \overline{g(t)} \, dt$, where $dt$ is the Lebesgue measure. For $f,g\in \ell^{2}(a^{-1}\Z)$ (and particular for sequences in $\ell^{1}(a^{-1}\Z)$) we have $\langle f,g\rangle = \sum_{t\in a^{-1}\Z} f(t) \, \overline{g(t)}$. For $f,g\in\C^{d}$ we have $\langle f,g\rangle = \sum_{t=0}^{d-1} f(t) \, \overline{g(t)}$.

The \emph{module norm} of a function in $\SO(\R)$, a sequence in $\ell^{1}(a^{-1}\Z)$ and a vector in $\C^{d}$ is given, respectively,  by
\begin{align*}
   & \Vert g \Vert_{\lmodule^{\R}_{\theta}} = \big\Vert \glhs{g}{g}{{\R}} \big\Vert_{\textnormal{op},L^{2}}^{1/2},\ \ g\in\SO(\R), \\
   & \Vert g \Vert_{\lmodule^{a^{-1}\Z}_{\theta}} = \big\Vert \glhs{g}{g}{{a^{-1}\Z}} \big\Vert_{\textnormal{op},\ell^{2}}^{1/2}, \ \ g\in\ell^{1}(a^{-1}\Z),\\
   & \Vert g \Vert_{\lmodule^{\Z_d}_{\theta}} = \big\Vert \glhs{g}{g}{\Z_d} \big\Vert_{\textnormal{op},\ell^{2}(\Z_{d})}^{1/2}, \ \ g\in\C^{d}.
\end{align*}

Established results in the theory of Gabor frames, and especially concerning the sampling and periodization of Gabor frame generators \cite{so07}, directly translate into the following statements.
\begin{theorem} \label{th:intro}
Let all notation be as above. If $g$ is a function in $\SO(\R)$ (or particularly in the Schwartz space) such that $\glhs{g}{g}{\R}$ is a projection in $\lmodule^{\R}_{\theta}$, then the following holds.
\begin{enumerate}
    \item[(i)] The module norm of $g$ satisfies
    \[ \Vert g \Vert_{\lmodule^{\R}_{\theta}} \le C := \theta^{-1} \sum_{m,n\in\Z} \vert \langle g, e^{2\pi i m (\cdot)} g( \cdot - n\theta^{-1})\rangle \vert.\]
    \item[(ii)] The sequence $\tilde{g}:= \{ \sqrt{a^{-1}} \, g(t)\}_{t\in a^{-1}\Z}$ belongs to $\ell^{1}(a^{-1}\Z)$ and is such that $\glhs{\tilde{g}}{\tilde{g}}{a^{-1}\Z}$ is a projection in $\lmodule^{a^{-1}\Z}_{\theta}$. Moreover, the module norm of $\tilde{g}$ satisfies $\Vert \tilde{g} \Vert_{\lmodule^{a^{-1}\Z}_{\theta}} \le C$.
    \item[(iii)] The finite sequence $\tilde{\tilde{g}}(t) :=  \sqrt{a^{-1}} \sum_{k\in\Z} g( a^{-1}(t-kd))$, $t\in\{0,1,\ldots, d-1\}$ belongs to $\C^{d}$ and is such that $\glhs{\tilde{\tilde{g}}}{\tilde{\tilde{g}}}{\Z_{d}}$ is a projection in $\lmodule^{\Z_{d}}_{\theta}$. Moreover, the module norm of $\tilde{\tilde{g}}$ satisfies $\Vert \tilde{\tilde{g}} \Vert_{\lmodule^{\Z_d}_{\theta}} \le C$. 
\end{enumerate}
\end{theorem}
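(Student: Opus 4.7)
The three parts are linked by natural contractive $\ast$-homomorphisms $\Phi_{1}:\lmodule^{\R}_{\theta}\to\lmodule^{a^{-1}\Z}_{\theta}$ and $\Phi_{2}:\lmodule^{a^{-1}\Z}_{\theta}\to\lmodule^{\Z_{d}}_{\theta}$, induced by the $\ell^{1}$-quotient maps that periodize the frequency index by $M$ (for $\Phi_{1}$) and the time index by $N$ (for $\Phi_{2}$). These quotients descend to $\ast$-homomorphisms because the integrality conditions $\theta=a/M=b/N$ and $d=aN$ (with $a,b,M,N\in\N$) guarantee invariance of the twisted cocycles under the relevant periodizations. Since $\ast$-homomorphisms preserve projections and contract the operator norm, the plan is to establish the module-norm bound in (i) and verify the identities $\Phi_{1}\bigl(\glhs{g}{g}{\R}\bigr)=\glhs{\tilde g}{\tilde g}{a^{-1}\Z}$ and $\Phi_{2}\bigl(\glhs{\tilde g}{\tilde g}{a^{-1}\Z}\bigr)=\glhs{\tilde{\tilde g}}{\tilde{\tilde g}}{\Z_{d}}$; parts (ii) and (iii) then follow automatically, with the bound from (i) transported intact.

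For (i), the key tool is the Fundamental Identity of Gabor Analysis (Janssen's representation), which for $g\in\SO(\R)$ expresses the operator associated with $\glhs{g}{g}{\R}$ as an absolutely convergent twisted convolution over the adjoint lattice $\Lambda^{\circ}=\theta^{-1}\Z\times\Z$ with coefficients $\theta^{-1}\langle g,\pi(\mu)g\rangle$, the absolute summability over $\Lambda^{\circ}$ being a defining feature of $\SO(\R)$. The elementary $\ell^{1}$-bound on twisted convolutions then gives $\|\glhs{g}{g}{\R}\|_{\textnormal{op},L^{2}}\le C$, and hence $\|g\|_{\lmodule^{\R}_{\theta}}^{2}\le C$. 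Since by hypothesis $\glhs{g}{g}{\R}$ is a nonzero orthogonal projection on $L^{2}(\R)$, its operator norm equals $1$, so $\|g\|_{\lmodule^{\R}_{\theta}}=1$ and in particular $C\ge 1$; the stated bound $\|g\|_{\lmodule^{\R}_{\theta}}\le C$ follows.

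The identities needed for $\Phi_{1}$ and $\Phi_{2}$ rest on Poisson-type summation formulas for Gabor inner products. Applying Poisson summation to the absolutely summable function $t\mapsto g(t)\overline{g(t-n)}\,e^{-2\pi i\theta m t}$ produces the aliasing identity
\[
\langle\tilde g,\pi_{a^{-1}\Z}(n,\theta m)\tilde g\rangle_{\ell^{2}(a^{-1}\Z)}=\sum_{k\in\Z}\langle g,\pi_{\R}(n,\theta m+ak)g\rangle_{L^{2}(\R)}.
\]
Because the frequency in $\lmodule^{a^{-1}\Z}_{\theta}$ is measured modulo $a=\theta M$, regrouping the double sum over $(n,m)\in\Z\times\Z_{M}$ and $k\in\Z$ into a single sum over $\Z\times\Z$ exhibits $\Phi_{1}\bigl(\glhs{g}{g}{\R}\bigr)$ as $\glhs{\tilde g}{\tilde g}{a^{-1}\Z}$. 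The analogous time-side Poisson summation, corresponding to the periodization $\tilde g\mapsto\tilde{\tilde g}$ on $a^{-1}\Z/N\Z\cong\Z_{d}$, establishes $\Phi_{2}\bigl(\glhs{\tilde g}{\tilde g}{a^{-1}\Z}\bigr)=\glhs{\tilde{\tilde g}}{\tilde{\tilde g}}{\Z_{d}}$. Membership of $\tilde g$ in $\ell^{1}(a^{-1}\Z)$ and of $\tilde{\tilde g}$ in $\C^{d}$ follows from the bounded sampling $\SO(\R)\to\ell^{1}(a^{-1}\Z)$ and the bounded periodization $\ell^{1}(a^{-1}\Z)\to\C^{d}$, both standard features of Feichtinger's algebra.

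The main technical obstacle is the careful bookkeeping of cocycle descent and of the normalization factors $\sqrt{a^{-1}}$ and $|\Lambda|^{-1}$ as they propagate through the Poisson summation steps, together with the verification that $\Phi_{1}$ and $\Phi_{2}$ are genuine $\ast$-homomorphisms at the $\ell^{1}$-Banach algebra level (this is where the integrality of $a,b,M,N$ enters decisively). Once the setup is in place, the inequality $\|g\|_{\lmodule^{\R}_{\theta}}\le C$ established for the continuous case transports through $\Phi_{1}$ and $\Phi_{2}$ without any further estimate.
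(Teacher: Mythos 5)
Your argument is correct, but it is organized quite differently from the paper's. The paper obtains Theorem \ref{th:intro} (and its multi-window version, Theorem \ref{th:intro-multi}) as a special case of the general sampling and periodization theorems, Theorem \ref{th:sampl-non-sep} and Theorem \ref{th:period-non-sep}: there one takes $h_{j}=g_{j}$, and the proof verifies the Wexler--Raz biorthogonality relations of Lemma \ref{le:wex-raz} for the restricted/periodized generators over the \emph{adjoint} lattice $\tilde{\La}^{\circ}$, using the Poisson-summation identity of Lemma \ref{lem:sampledframes-2}; the norm bound comes from Lemma \ref{le:so-implies-bessel}, $\Vert \tilde g\Vert_{\tilde\La}=B_{\textnormal{opt}}\le B(g,\La)=C$, which holds with no projection hypothesis and identifies the bound as the optimal upper frame bound. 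You instead build explicit coefficient-periodization $\ast$-homomorphisms $\Phi_{1},\Phi_{2}$ between the three algebras and prove the intertwining $\Phi_{1}\big(\glhs{g}{g}{\R}\big)=\glhs{\tilde g}{\tilde g}{a^{-1}\Z}$ via the aliasing identity over the lattice $\La$ itself (your aliasing formula is exactly the content of Lemma \ref{lem:sampledframes-2}, and the normalization $\sqrt{a^{-1}}$ does cancel the factor $a$ from Poisson summation, so the identity is correct). Your route proves more than idempotency --- it intertwines the entire module inner product --- at the cost of verifying cocycle descent, and it is the natural ``$C^{*}$-module'' packaging of the same Poisson-summation computation; the paper's route is leaner, applies to non-self-dual pairs $(g_j),(h_j)$ and to general closed subgroups of general LCA groups, and yields the sharper information $B_{\textnormal{opt}}\le C$.

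Two small points to tighten. First, $\Phi_{1}$ and $\Phi_{2}$ are contractive for the $\ell^{1}$-norms, but contractivity for the \emph{operator} norms on the three different Hilbert spaces is not automatic for $\ast$-homomorphisms of Banach $\ast$-algebras in fixed representations, so the final sentence about the bound ``transporting without further estimate'' is not justified as stated; it is, however, unnecessary, since your own observation that a nonzero self-adjoint idempotent has operator norm $1$, combined with $C\ge 1$ from part (i), gives $\Vert\tilde g\Vert_{\lmodule^{a^{-1}\Z}_{\theta}}=1\le C$ and likewise for $\tilde{\tilde g}$. Second, the adjoint-lattice (Janssen) expansion with coefficients $\theta^{-1}\langle g,\pi(\lac)g\rangle$ represents $\grhs{g}{g}{\Lac}$, not $\glhs{g}{g}{\R}$; to conclude $\Vert\glhs{g}{g}{\R}\Vert_{\textnormal{op}}\le C$ you should invoke the identity $\Vert g\Vert_{\La}=\Vert g\Vert_{\Lac}$ stated in Section \ref{sec:frames-and-modules}.
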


The purpose of this note is to generalize Theorem \ref{th:intro} to the setting of functions over locally compact abelian groups and the  associated Heisenberg modules and Gabor systems as described in \cite{jalu18}. We do this in Section \ref{sec:sampling} and \ref{sec:periodization}. \\
Specifically, our main results are the sampling and periodization theorem for the generators of \emph{matrix valued projections in Banach algebras} and, equivalently, for \emph{dual multi-window Gabor frames} that are generated by the time-frequency shifts from closed subgroups of the time-frequency plane of general locally compact abelian groups in Theorem \ref{th:sampl-non-sep} and Theorem \ref{th:period-non-sep}.

Due to the abstract nature of these results we give a number of concrete examples in Section \ref{sec:examples}. 

In Section \ref{sec:prelim} we state some necessary terminology on Fourier analysis on groups, on the Feichtinger algebra (Section \ref{sec:fei-alg}) and the connection between multi-window Gabor frames and matrix-valued projections in Heisenberg modules (Section \ref{sec:frames-and-modules}).

In Section \ref{sec:from-discrete-to-continuous} we state results that are of independent interest in the matter of projections for the tori described here in the introduction. Observe that Theorem \ref{th:intro} only applies to non-commutative tori where $\theta$ is rational. In Section \ref{sec:from-discrete-to-continuous} we translate known results in Gabor analysis into the language of NC-tori to give an approach for the irrational case. Furthermore, we state results that make it possible to construct generators of projections in $\lmodule_{\theta}^{\R}$ from the sequences $\tilde{g}$ and $\tilde{\tilde{g}}$ obtained via Theorem \ref{th:intro}. The results in Section \ref{sec:from-discrete-to-continuous} are based on the theory of Gabor frames established in \cite{feka04,feka07,grle04,ka05}. These results indicate that a natural measure for projective finitely generated modules in terms of the difference of the generators in the module norm and hence two such modules are close if their generators are in the module norm. Our results show that this is the case for Heisenberg modules over $\lmodule_{\theta}^{\R}$. If $\lmodule_{\theta}^{\R}$ is the irrational noncommutative torus, then we show that for a sequence of matrix algebras converging to $\lmodule_{\theta}^{\R}$ in the sense of Rieffel's quantum Gromov-Hausdorff distance, then one can use the generators of Heisenberg modules over these matrix algebras can be turned into generators of $\lmodule_{\theta}^{\R}$ and that these generators converge with respect to the module norm.

\section{Preliminaries}
\label{sec:prelim}
We let $G$ be a locally compact Hausdorff abelian topological (LCA) group and let $\ghat$ be its dual group. 
The action of a character $\omega\in \ghat$ on an element $x\in G$ is written as $\omega(x)$. We assume some fixed Haar measure $\mu_{G}$ on $G$ and we normalize the Haar measure $\mu_{\ghat}$ on $\ghat$ in the unique way such that the Fourier inversion holds. That is, if $f\in L^{1}(G)$ is such that its Fourier transform, $\mathcal{F}f(\omega) = \hat{f}(\omega) = \int_{G} f(t) \, \overline{\omega(t)} \, dt$, $\omega\in \ghat$ is a function in $L^{1}(\ghat)$, then
\[ f(t) = \int_{\ghat} \hat{f}(\omega) \, \omega(t) \, d\omega \ \ \text{for all} \ \ t\in G.\]
We equip $L^{2}(G)$ with the inner product $\langle f, g\rangle = \int_{G} f(t) \overline{g(t)} \, dt$ which is linear in the first entry. The Fourier transform extends to a unitary operator on $L^{2}(G)$.

For any $x\in G$ and $\omega\in\ghat$ we define the translation operator (time-shift) $T_{x}$ and the modulation operator (frequency-shift) $E_{\omega}$ by
\[ T_{x}f(t) = f(t-x) \ \ \text{and} \ \ E_{\omega} f(t) = \omega(t) f(t), \ \ t\in G,\]
where $f$ is a complex-valued function on $G$.
Observe that 
\[ \mathcal{F}T_{x} = E_{-\omega} \mathcal{F} \ \ , \ \ \mathcal{F}E_{\omega}=T_{\omega} \mathcal{F} \ \ , \ \  E_{\omega}T_{x} = \omega(x) T_{x} E_{\omega} .\]
For any $\chi = (x,\omega)\in G\times\ghat$
we define the \emph{time-frequency shift operator} 
\[ \pi(\chi) \equiv \pi(x,\omega) := E_{\omega} T_{x}.\]
It is clear that time-frequency shift operators are unitary on $L^{2}(G)$. 

For two elements $\chi_{1}=(x_{1},\omega_{1})$ and $\chi_{2}=(x_{2},\omega_{2})$ in $G\times\ghat$ we define the \emph{cocycle}
\begin{equation} \label{eq:def-cocy} \cocy : (G\times\ghat)\times (G\times\ghat) \to \mathbb{T}, \ \cocy(\chi_{1},\chi_{2}) = \overline{\omega_{2}(x_{1})} \end{equation}
and the associated \emph{symplectic cocyle}
\begin{equation} \label{eq:def-cocy-sym} \cocy_s : (G\times\ghat)\times (G\times\ghat) \to \mathbb{T}, \ \cocy_s(\chi_{1},\chi_{2}) = \cocy(\chi_{1},\chi_{2}) \, \overline{\cocy(\chi_{2},\chi_{1})} = \overline{\omega_{2}(x_{1})} \, \omega_{1}(x_{2}). \end{equation}
For any $\chi,\chi_{1},\chi_{2},\chi_{3}\in G\times\ghat$ the cocycle and time-frequency shift satisfy the following,
\begin{align*}
\overline{\cocy(\chi_{1},\chi_{2})} & = \cocy(-\chi_{1},\chi_{2}) = \cocy(\chi_{1},-\chi_{2}), \\
\cocy(\chi_{1}+\chi_{2},\chi_{3}) & = \cocy(\chi_{1},\chi_{3}) \, \cocy(\chi_{2},\chi_{3}), \ \ \cocy(\chi_{1},\chi_{2}+\chi_{3}) = \cocy(\chi_{1},\chi_{2}) \, \cocy(\chi_{1},\chi_{3}), \\
 \pi(\chi_{1}) \, \pi(\chi_{2}) &  = \cocy(\chi_{1},\chi_{2}) \, \pi(\chi_{1}+\chi_{2}), \\
 \pi(\chi_{1}) \, \pi(\chi_{2}) & = \cocy_s(\chi_{1},\chi_{2}) \,\pi(\chi_{2}) \, \pi(\chi_{1}), \\
 \pi(\chi)^* & = \cocy(\chi,\chi) \, \pi(-\chi), \\
 \pi(\chi_{1})^{*} \, \pi(\chi_{2})^{*} & = \overline{\cocy(\chi_{2},\chi_{1})} \, \pi(\chi_{1}+\chi_{2})^{*}.
\end{align*}
The \emph{short-time Fourier transform} with respect to a given function $g\in L^{2}(G)$ is the operator
\begin{equation} \label{eq:def-STFT} \mathcal{V}_{g} : L^{2}(G)\to L^{2}(G\times\ghat), \ \mathcal{V}_{g}{f}(\chi) = \langle f, \pi(\chi) g\rangle, \ \chi\in G\times\ghat. \end{equation}
The operator $\mathcal{V}_{g}^{*} \circ \mathcal{V}_{g}$ is a multiple of the identity. Specifically, for all $f_{1},f_{2},g,h\in L^{2}(G)$
\begin{align} \label{eq:STFT} \langle f_{1},f_{2} \rangle \, \langle h, g\rangle & = \langle \mathcal{V}_{g}f_{1}, \mathcal{V}_{h}f_{2}\rangle \\
& = \int_{G\times\ghat} \langle f, \pi(\chi) g \rangle \, \langle \pi(\chi) h , f_{2}\rangle \, d\mu_{G\times\ghat}(\chi). \nonumber \end{align}

The symbol $\Lambda$ will always denote a closed subgroup of the time-frequency plane $G\times\ghat$. The induced topology and group action on $\Lambda$ and on the quotient group $(G\times\ghat)/\Lambda$ turn those into LCA groups as well, and can therefore be equipped with their own Haar measures. If the measures on $G$, $\ghat$ and $\Lambda$ are fixed, then the Haar measure $\mu_{(G\times\ghat)/\Lambda}$ on the quotient group $(G\times\ghat)/\Lambda$ can be uniquely scaled such that, for all $f\in L^{1}(G\times\ghat)$,
\begin{equation} \label{eq:2503a} \int_{G\times\ghat} f(\chi) \, d\mu_{G\times\ghat}(\chi) = \int_{(G\times\ghat)/\Lambda} \int_{\Lambda} f(\chi+\lambda) \, d\mu_{\Lambda}(\lambda) \, d\mu_{(G\times\ghat)/\Lambda}(\dot{\chi}) \ \ \dot{\chi} = \chi + \Lambda, \ \chi\in G\times\ghat.\end{equation}
If \eqref{eq:2503a} holds we say that $\mu_{G\times\ghat}$, $\mu_{\Lambda}$ and $\mu_{(G\times\ghat)/\Lambda}$ are \emph{canonically related} and the equality in \eqref{eq:2503a} is called \emph{Weil's formula}. We always choose the measures $\mu_{G\times\ghat}$, $\mu_{\Lambda}$ and $\mu_{(G\times\ghat)/\Lambda}$ in this way. For more on this, see \cite[p.87-88]{rest00} and \cite[Theorem~3.4.6]{rest00}.
With the uniquely determined measure $\mu_{(G\times\ghat)/\Lambda}$ we define the \emph{size} or the \emph{covolume} of $\Lambda$, by $\s(\Lambda)=\int_{(G\times\ghat)/\Lambda} 1 \, d\mu_{(G\times\ghat)/\Lambda}$. Note that $\s(\Lambda)$ is finite if and only if $\Lambda$ is a \emph{co-compact} subgroup of $G\times\ghat$, i.e., the quotient group $(G\times\ghat)/\Lambda$ is compact. If $\Lambda$ is discrete, co-compact (hence a lattice), and equipped with the counting measure, then $\s(\Lambda)$ is exactly the measure of any of its fundamental domains. The \emph{adjoint} group of $\Lambda$ is the closed subgroup of $G\times\ghat$ given by
\begin{align*} \Lambda^{\circ} 
& = \{ \chi \in G\times\ghat \, : \, \cocy_{s}(\chi,\lambda) = 1 \ \ \textnormal{for all} \ \ \lambda\in\Lambda  \ \}. \end{align*}

For any closed subgroup $\Lambda$ one has $(\Lambda^{\circ})^{\circ}=\Lambda$ and $\widehat{\Lambda^{\circ}}\cong (G\times\ghat)/\Lambda$. Given these identifications, we take the Haar measure $\mu_{\Lambda^{\circ}}$ on $\Lambda^{\circ}$ such that the Fourier inversion between functions on $\Lambda^{\circ}$ and $(G\times\ghat)/\Lambda$ holds. This unique measure on $\Lambda^{\circ}$ is called the \emph{orthogonal measure} relative to $\mu_{\Lambda}$ \cite[Definition 5.5.1]{rest00}. We now choose the Haar measure on $(G\times\ghat)/\Lambda^{\circ}$ such that the measures $\mu_{(G\times\ghat)}$, $\mu_{\Lambda^{\circ}}$ and $\mu_{(G\times\ghat)/\Lambda^{\circ}}$ are canonically related. This ensures that also the Fourier inversion formula between functions on $\Lambda$ and $(G\times\ghat)/\Lambda^{\circ}$ holds \cite[Theorem 5.5.12]{rest00}. 
\begin{remark} \label{rem:orth-measure-for-cocompact}
For a closed subgroup $\Lambda$ with measure $\mu_{\Lambda}$ it is in general difficult to say more about the orthogonal measure on $\mu_{\Lambda^{\circ}}$ on $\Lambda^{\circ}$. However, if the quotient group $(G\times\ghat)/\Lambda$ is compact (equivalently $\Lambda^{\circ}$ is discrete), then the orthogonal measure on $\Lambda^{\circ}$ satisfies
\begin{equation} \label{eq:cocompact-orth-measure} \int_{\Lambda^{\circ}} f(\lambda^{\circ}) \, d\mu_{\Lambda^{\circ}}(\lambda^{\circ}) = \frac{1}{\s(\Lambda)} \sum_{\lambda^{\circ}\in\Lambda^{\circ}} f(\lambda^{\circ}) \ \ \textnormal{for all} \ \ f\in \ell^{1}(\Lambda^{\circ}).\end{equation}
\end{remark}

For more on harmonic analysis on locally compact abelian groups see the book by Reiter and Stegeman \cite{rest00}. Other books are the one by Folland \cite{fo16} and Hewitt and Ross \cite{hero70,hero79}.

\subsection{The Feichtinger algebra}
\label{sec:fei-alg}
For any LCA group $G$ the \emph{Feichtinger algebra}  $\SO(G)$ \cite{fe81-2,ja19,lo80} (sometimes denoted by $\mathbf{M}^{1}(G)$) is the set of functions given by 
%
\[ \SO(G) = \big\{ f\in L^{2}(G) \, : \, \mathcal{V}_{f}f \in L^{1}(G\times\ghat) \big\} 
.\]
For the definition of $\mathcal{V}_{f}f$ see \eqref{eq:def-STFT}. Any non-zero function $g\in \SO(G)$ can be used to define a norm on $\SO(G)$, 
\begin{equation} \label{eq:norm} \Vert \cdot \Vert_{\SO(G),g} : \SO(G) \to \R^{+}_{0}, \ \Vert f \Vert_{\SO(G),g} = \Vert \mathcal{V}_{g}f \Vert_{1}. \end{equation}
All norms defined in this way are equivalent \cite[Proposition 4.10]{ja19} and they turn $\SO(G)$ into a Banach space \cite[Theorem 4.12]{ja19}. The usefulness of the Feichtinger algebra $\SO(G)$ lies in the fact that it behaves very much like the Schwartz-Bruhat space $\mathscr{S}(G)$ (also, one has the inclusion $\mathscr{S}(G)\subset \SO(G)$, see \cite[Theorem 9]{fe81-2}). 

The construction of projective modules over the twisted group algebra in Rieffel's setting \cite{ri88} requires one to have a function space that allows us to define actions and innerproducts with values in $L^1$, and Feichtinger's algebra turns out to be the most convenient choice. In some problems it has also turned out to be of relevance that we are in the position to deal with settings beyond the smooth one \cite{dajalalu18,lu09,lu11,jalu18}. 

Among its properties, we mention the following ones. Properties (vi)-(ix) are of special importance to us here.

\begin{lemma}\label{le:s0-properties} \label{le:SO-properties} 
\begin{enumerate}[(i)]
\item All functions in $\SO(G)$ are continuous, absolutely integrable, and vanish at infinity.
\item If $G$ is discrete, then $(\SO(G), \Vert \cdot \Vert_{\SO})=(\ell^{1}(G), \Vert \cdot \Vert_{1})$.
\item Time-frequency shifts $\pi(\chi)$, $\chi\in G\times\ghat$, are an isometry on $\SO(G)$. The Fourier transform is a continuous bijection from $\SO(G)$ onto $\SO(\ghat)$. 
\item $\SO(G)$ is continuously embedded into $L^{p}(G)$ for all $p\in[1,\infty]$. 
In fact, if $1/p+1/q=1$, then
\[ \Vert f \Vert_{p} \le \Vert g \Vert_{q}^{-1} \Vert f \Vert_{\SO,g} \ \ \text{for all} \ \ f\in \SO(G).\]
\item $\SO(G)$ is a Banach algebra with respect to convolution and point-wise multiplication. 
\item For any closed subgroup $H$ of $G$, the restriction operator 
\[ \mathcal{R}_{H} : \SO(G) \to \SO(H), \ \mathcal{R}_{H}f(t) = f(t), \ \ t\in H,\]
is a linear, bounded and surjective operator from $\SO(G)$ onto $\SO(H)$.
\item For any closed subgroup $H$ of $G$, the periodization operator 
\[ \mathcal{P}_{H} : \SO(G) \to \SO(G/H), \ \mathcal{P}_{H} f(\dot t) = \int_{H} f(t+x) \, d\mu_{H}(x), \ \ \dot t = t+H, \ t\in H\]
is a linear, bounded and surjective operator from $\SO(G)$ onto $\SO(G/H)$.
\item For any $f,g\in\SO(G)$ the short-time Fourier transform $\mathcal{V}_{g}{f}$ is a function in $\SO(G\times\ghat)$. Also, there exists is a constant $c>0$ such that $\Vert\mathcal{V}_{g}f\Vert_{\SO} \le c \, \Vert f \Vert_{\SO} \, \Vert g \Vert_{\SO}$ for all $f,g\in \SO(G)$. 
\item The Poisson (summation) formula holds pointwise for all functions in $\SO(G)$. That is, for any closed subgroup $H$ of $G$ and any $f\in \SO(G)$
\[ \int_{H} f(t) \, d\mu_{H}(t) = \int_{H^{\perp}} \hat{f}(\gamma) \, d\mu_{H^{\perp}}.\]
If $H$ is a closed co-compact subgroup of $G$, then the Poisson formula takes the form
\begin{equation} \label{eq:poisson-for-cocompact}  \int_{H} f(t) \, d\mu_{H}(t) = \frac{1}{\textnormal{s}(H)} \sum_{\gamma\in H^{\perp}} \hat{f}(\gamma). \end{equation}
\end{enumerate}
\end{lemma}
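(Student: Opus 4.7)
The statement is a compendium of well-known properties of Feichtinger's algebra, so my plan is to establish each item by the shortest available argument, citing the complete developments in \cite{fe81-2,ja19,lo80} and sketching only the nontrivial pieces myself. Items (i), (iii), (iv), (v) and (viii) are treated in detail in \cite{fe81-2} and in the survey \cite{ja19}; the common device is to fix one nonzero reference window $g_{0}\in\SO(G)$, express each claim as a statement about $\mathcal{V}_{g_{0}}f\in L^{1}(G\times\ghat)$, and transfer the problem to elementary $L^{1}$-estimates. For (iv), the reproducing formula \eqref{eq:STFT} applied with $h=g_{0}$ expresses $f$ as a continuous superposition $\Vert g_{0}\Vert_{2}^{-2}\int \mathcal{V}_{g_{0}}f(\chi)\,\pi(\chi)g_{0}\,d\chi$ of unit $L^{p}$-norm translates of $g_{0}$, and Minkowski's integral inequality yields the claimed embedding. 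Item (ii) is immediate from the definition: when $G$ is discrete, $\ghat$ is compact with finite Haar measure, and a direct computation shows $\Vert\mathcal{V}_{f}f\Vert_{1}\le \mu_{\ghat}(\ghat)\,\Vert f\Vert_{1}^{2}$, so $\SO(G)=\ell^{1}(G)$ with equivalent norms.

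For items (vi) and (vii), which are the properties most heavily used in Sections \ref{sec:sampling} and \ref{sec:periodization}, the plan is to obtain boundedness and surjectivity together by Fourier duality. Boundedness of $\mathcal{R}_{H}$ follows from a pointwise domination of the form $|\mathcal{V}_{\mathcal{R}_{H} g_{0}}(\mathcal{R}_{H} f)(x,\omega)|\le \int_{H^{\perp}}|\mathcal{V}_{g_{0}}f(x,\omega+\gamma)|\,d\mu_{H^{\perp}}(\gamma)$ for a suitably chosen window, whose $L^{1}$-norm on $H\times\widehat{H}$ is estimated by Weil's formula. Boundedness of $\mathcal{P}_{H}$ then follows dually from (iii) together with the identifications $\widehat{G/H}\cong H^{\perp}$ and $\widehat{H}\cong\ghat/H^{\perp}$, which interchange restriction and periodization under the Fourier transform. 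Surjectivity in both cases is proved by a lifting construction: given $u\in\SO(H)$, one writes $u=\mathcal{R}_{H}(u\otimes v)$ for a fixed compactly supported $v\in\SO(G/H)$ of unit mass, with the tensor interpreted via a local cross-section of the quotient map $G\to G/H$ when $G$ is not a direct product; this is the classical Feichtinger-type argument reproduced in \cite{ja19}.

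Finally, (ix) is the Poisson summation formula on $\SO$: it holds on the Schwartz--Bruhat space $\mathscr{S}(G)$ by Fourier duality, and extends to $\SO(G)$ by density, since both sides of the claimed equality are continuous linear functionals in the $\SO$-norm by (vi), (vii) and (iii), while $\mathscr{S}(G)$ is $\Vert\cdot\Vert_{\SO}$-dense in $\SO(G)$. The co-compact specialization \eqref{eq:poisson-for-cocompact} then results from substituting the description of the orthogonal measure from Remark \ref{rem:orth-measure-for-cocompact} into the general formula. The main obstacle across the whole list is the surjectivity assertion in (vi) and (vii), since it requires an actual construction rather than merely a norm inequality, but the tensor-product lifting handles this uniformly in the LCA setting; everything else is a routine corollary of the $\mathcal{V}_{g_{0}}$-transfer principle.
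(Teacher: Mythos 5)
Your proposal and the paper's own proof rest on the same foundation: every item of Lemma \ref{le:SO-properties} is imported from the literature, and the paper's ``proof'' is literally a list of citations ((i)--(iii), (v), (viii) to \cite{fe81-2,ja19,re71}, (iv) to the amalgam identification $\SO(G)=W(\mathcal{F}L^{1},L^{1})$ of \cite{fe81-1,fe81-2}, (vi)--(vii) to \cite[Theorem~7]{fe81-2} and \cite[Theorem~5.7]{ja19}, (ix) to \cite{fe81-1,ja19}). Since you cite the same sources, your write-up is acceptable on the same terms, and several of your added sketches are genuinely correct and more informative than the paper's bare references: the $\mu_{\ghat}(\ghat)\Vert f\Vert_{1}^{2}$ computation for (ii), the superposition argument for the embedding in (iv) (though it yields a cruder constant than the stated $\Vert g\Vert_{q}^{-1}$), and the density-of-$\mathscr{S}(G)$ argument for (ix) all work. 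Two of your sketches, however, would not survive as self-contained proofs. For the boundedness of $\restrict{H}$, the pointwise domination $\vert\mathcal{V}_{\restrict{H}g_{0}}(\restrict{H}f)(x,\omega)\vert\le\int_{H^{\perp}}\vert\mathcal{V}_{g_{0}}f(x,\omega^{e}+\gamma)\vert\,d\mu_{H^{\perp}}(\gamma)$ (with $\omega^{e}$ an extension of $\omega$) is correct, being Weil's formula applied to $f\cdot\overline{T_{x}g_{0}}\in\SO(G)$; but integrating it over $H\times\widehat{H}$ leaves you with $\int_{H}\big(\int_{\ghat}\vert\mathcal{V}_{g_{0}}f(x,\xi)\vert\,d\xi\big)\,d\mu_{H}(x)$, i.e.\ the restriction to $H$ of the $W(\mathcal{F}L^{1},L^{1})$-control function of $f$, and bounding \emph{that} by $\Vert f\Vert_{\SO}$ is itself a nontrivial restriction statement (it needs $W(\mathcal{F}L^{1},L^{1})\hookrightarrow W(C_{0},L^{1})$ plus the restriction property of the amalgam space) --- precisely the content outsourced to \cite[Theorem~7]{fe81-2}. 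Likewise, the tensor lifting $u=\restrict{H}(u\otimes v)$ for surjectivity presupposes a (local) splitting $G\cong H\times G/H$ which a general LCA quotient does not admit; the argument actually carried out in \cite{ja19} lifts an atomic decomposition $u=\sum_{n}c_{n}\pi(\chi_{n})\restrict{H}g_{0}$ term by term and needs no cross-section. Neither issue affects the validity of the lemma, only the self-containedness of those two sketches; with the citations in place your proposal is on the same footing as the paper's proof.
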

\begin{proof}(i). This follows from \cite[Definition 1]{fe81-2} or \cite[Lemma 4.19]{ja19}. (ii). see 
\cite[Remark 3]{fe81-2} or \cite[Lemma 4.11]{ja19}. (iii). \cite[eq.\ (4.12), Example 5.2(i,iii), ]{ja19}. (iv). That $\SO$ is continuously embedded into $L^{p}$ follows from the fact that that $\SO(G) = W(\mathcal{F}L^{1},L^{1})$ (\cite[Remark 6]{fe81-2}) together with the inclusions in \cite[Lemma 1.2(iv)]{fe81-1} and the fact that $W(L^{p},L^{p}) = L^{p}$ \cite[Lemma 1.2(i)]{fe81-1}. For the inequality see \cite[Lemma 4.11]{ja19}.  (v). $\SO$ is a Segal algebra (\cite[Theorem 1]{fe81-2}) and any Segal algebra is a convolution algebra \cite[\S 4]{re71}. By (iii) this implies that it is also an algebra under pointwise multiplication. Alternatively, see \cite[Corollary 4.14]{ja19}. (vi+vii). See \cite[Theorem 7]{fe81-2} or \cite[Theorem 5.7]{ja19}. (viii). \cite[Theorem 5.3(ii)]{ja19}. (ix). That the Poisson formula holds for functions in $\SO$ is stated in \cite[Remark 15]{fe81-1}. Alternatively, see \cite[Theorem 5.7, Example 5.11]{ja19}. 
\end{proof}

\subsection{Gabor frames and Heisenberg modules}
\label{sec:frames-and-modules}
The following is a summary of certain results and facts that can, unless specififed otherwise, be found in \cite{jalu18}. Let $\Lambda$ be a closed subgroup of the time-frequency plane $G \times \widehat{G}$ and let $\Lambda^{\circ}$ be its adjoint group. 
We use the integrated Schr\"odinger representation to define the following two Banach algebras,
\begin{align*} 
\lmodule & = \big\{ {\bf a} \in \mathsf{B}(L^{2}(G)) \, : \, {\bf{a}} = \int_{\Lambda} \mathsf{a}(\lambda) \, \pi(\lambda) \, d\lambda, \ \mathsf{a}\in \SO(\Lambda)\big\}, \\
\rmodule & = \big\{ {\bf b} \in \mathsf{B}(L^{2}(G)) \, : \, {\bf{b}} = \int_{\Lambda^{\circ}} \mathsf{b}(\lambda^{\circ}) \, \pi(\lambda^{\circ})^{*} \, d\lambda^{\circ}, \ \mathsf{b}\in \SO(\Lambda^{\circ})\big\}. \end{align*}
Indeed, the norm $\Vert {\bf{a}} \Vert_{\lmodule} = \Vert  \mathsf{a} \Vert_{\SO}$ (where ${\bf a}$ and $\mathsf{a}$ are related as in the definition of $\lmodule$) turns $\lmodule$ into an involutive Banach algebra with respect to composition of operators and where the involution is the $L^{2}$-adjoint. Similarly, $\rmodule$ becomes an involutive Banach algebra.
\begin{remark} In the definition of $\rmodule$ the measure on $\Lambda^{\circ}$ is the measure that is orthogonal to the measure on $\Lambda$, cf.\ Remark \ref{rem:orth-measure-for-cocompact}. Hence, if $\Lambda$ is a co-compact subgroup of $G\times\ghat$ (e.g., a lattice), then
\[ \rmodule = \big\{ {\bf b} \in \mathsf{B}(L^{2}(G)) \, : \, {\bf{b}} = \frac{1}{s(\Lambda)} \sum_{\lambda^{\circ}\in\Lambda^{\circ}} \mathsf{b}(\lambda^{\circ}) \, \pi(\lambda^{\circ})^{*} \, d\lambda^{\circ}, \ \mathsf{b}\in \SO(\Lambda^{\circ})\big\}. \]
Since $\Lac$ is discrete we have $\SO(\Lac)=\ell^{1}(\Lac)$ (cf.\ Lemma \ref{le:SO-properties}(ii)), and so $\Vert { \bf b} \Vert_{\rmodule} = \Vert \mathsf{b} \Vert_{\SO} = \Vert \mathsf{b} \Vert_{1}$. 
\end{remark}
The \emph{traces} on both $\lmodule$ and $\rmodule$ are given by the continuous operators
\[ \tr_{\lmodule} : \lmodule \to \C,  \ \tr_{\lmodule} (\mathbf{a}) = \mathsf{a}(0) \ , \ \ \ \ \tr_{\rmodule} : \rmodule \to \C,  \ \tr_{\rmodule} (\mathbf{b}) = \mathsf{b}(0). \]
Elements of $\lmodule$ act from the left on functions in $L^{2}(G)$ by 
\[ \mathbf{a} \cdot f := \int_{\Lambda} \mathsf{a}(\lambda) \pi(\lambda) f \, d\lambda, \ \ f\in L^{2}(G), \ \mathbf{a}\in \lmodule.\]
Operators in $\rmodule$ act from the right on $L^{2}(G)$,
\[ f \cdot \mathbf{b} := \int_{\Lambda^{\circ}} \mathsf{b}(\lambda^\circ)\, \pi(\lambda^{\circ})^* f \, d\lambda^{\circ}, \ \ f\in L^{2}(G), \ \mathbf{b}\in \rmodule.\]
We define $\lmodule$- and $\rmodule$-valued inner products in the following way: 
\begin{align*} \glhs{\,\cdot\,}{\,\cdot\,}{\Lambda} & : \SO(G)\times \SO(G) \to \lmodule, \ \glhs{f}{g}{\Lambda} = \int_{\Lambda} \langle f, \pi(\lambda) g \rangle \, \pi(\lambda) \, d\lambda,\\
\grhs{\,\cdot\,}{\,\cdot\,}{\Lac} & :\SO(G)\times\SO(G)\to \rmodule, \ \grhs{f}{g}{\Lac} = \int_{\Lambda^{\circ}} \langle g, \pi(\lambda^{\circ})^{*}f\rangle \,\pi(\lambda^{\circ})^{*} \, d\lambda^{\circ}. \end{align*}
\begin{remark}
In \cite{jalu18} the notation $\lhs{\,\cdot\,}{\,\cdot\,}$ and $\rhs{\,\cdot\,}{\,\cdot\,}$ is used for the $\lmodule$- and $\rmodule$-valued inner products, respectively. However, for our purposes it will prove useful that the inner products reflect the subgroup of the time-frequency plane that is used, i.e., $\La$ and $\Lac$.
\end{remark}
The $\lmodule$- and $\rmodule$-valued inner products satisfy the associativity condition,
\begin{equation} \label{eq:2302c} \glhs{f}{g}{\La} \cdot h = f\cdot\grhs{g}{h}{\Lac} \ \ \text{for all} \ \ f,g,h\in\SO(G).\end{equation}
I.e., $\int_{\Lambda} \langle f, \pi(\la) g\rangle \, \pi(\la) h \, d\la = \int_{\Lac} \langle h, \pi(\lac)^{*} g\rangle \, \pi(\lac)^{*} f \, d\lac$ for any $f,g,h\in\SO(G)$.
In time-frequency analysis, this equality is known as the \emph{fundamental-identity of Gabor analysis}.
We define the $\lmodule$-module and $\rmodule$-module norm to be $\|g\|_\Lambda:=\|\glhs{g}{g}{\La}\|^{1/2}_{\textnormal{op},L^{2}}$ and $\|g\|_{\Lambda^\circ}:=\|\grhs{g}{g}{\Lac}\|^{1/2}_{\textnormal{op},L^{2}}$, respectively. It is a fact that $\Vert g \Vert_{\La} = \Vert g \Vert_{\Lac}$.
Observe that
\[ \tr_{\lmodule} \big( \lhs{f}{g}\big) = \tr_{\rmodule} \big(\rhs{g}{f}\big) = \langle f, g\rangle \ \ \text{for all} \ \ f,g\in\SO(G). \]
Rather than just the Banach algebras $\lmodule$ and $\rmodule$, we wish to consider matrices that consist of such elements. Thus, for $n\in\N$ we let $\textnormal{M}_{n}(\lmodule)$ be the set of all $\lmodule$-valued $n\times n$ matrices $(\mathbf{a}_{j,k})$, $\mathbf{a}_{j,k}\in \lmodule$, $j,k\in\Z_{n}$. Elements in $\textnormal{M}_{n}(\lmodule)$ have the natural left-action on $n$-tuple of functions in $L^{2}(G)$, $L^{2}(G)^{n}$.\footnote{Rather than $n$-tuples of functions in $L^{2}(G)$ one can, equivalently, think of functions in $L^{2}(G\times\Z_{n})$ or of vector valued functions $L^{2}(G;\C^{n})$.} It is given by matrix-vector multiplication,
\[ (\mathbf{a}_{j,k})_{j,k\in\Z_{n}} \cdot (f_{j})_{j\in\Z_{n}} = \big( \sum_{k\in\Z_{\NumWin}} \mathbf{a}_{j,k} \cdot f_{k} \big)_{j\in\Z_{n}}.\]
We define the $\textnormal{M}_{n}(\lmodule)$-valued inner product on $\SO(G)^{n}$ as follows:
\begin{align*} & \gmlhs{\,\cdot\,}{\,\cdot\,}{\La} : \SO(G)^{\NumWin}\times \SO(G)^{\NumWin} \to \textnormal{M}_{\NumDim\cdot\NumWin}(\lmodule), \ \gmlhs{(f_{j})}{(g_{j})}{\La} = \begin{bmatrix} 
\glhs{f_{1}}{g_{1}}{\La} & \glhs{f_{1}}{g_{2}}{\La} & \cdots & \glhs{f_{1}}{g_{n}}{\La} \\
\glhs{f_{2}}{g_{1}}{\La} & \glhs{f_{2}}{g_{2}}{\La} & \cdots & \glhs{f_{2}}{g_{n}}{\La} \\
\vdots & \vdots & \ddots & \vdots \\
\glhs{f_{\NumWin}}{g_{1}}{\La} & \glhs{f_{\NumWin}}{g_{2}}{\La} & \cdots & \glhs{f_{\NumWin}}{g_{n}}{\La} 
\end{bmatrix}.\end{align*}
We use the square brackets $\gmlhs{\,\cdot\,}{\,\cdot\,}{\La}$ to distinguish the $\textnormal{M}_{n}(\lmodule)$-valued inner product from the $\lmodule$-valued inner product $\glhs{\,\cdot\,}{\,\cdot\,}{\La}$. For $\NumWin=1$ these two notions coincide.

For $n\in\N$ we let $\textnormal{M}_{n}(\rmodule)$ be the set of all $\rmodule$-valued $n\times n$ matrices $(\mathbf{b}_{j,k})$, $\mathbf{b}_{j,k}\in\rmodule$, $j,k\in\Z_{n}$. These have a natural right-action on $L^{2}(G)^{n}$, given by vector-matrix multiplication,
\[ (f_{j})_{j\in\Z_{n}} \cdot (\mathbf{b}_{j,k})_{j,k\in\Z_{n}} = \big( \sum_{k\in\Z_{\NumWin}} f_{k} \cdot \mathbf{b}_{k,j} \big)_{j\in\Z_{n}}. \]
The $\textnormal{M}_{n}(\rmodule)$-valued inner product is 
\[ \gmrhs{\,\cdot\,}{\,\cdot\,}{\Lac} : \SO(G)^{n}\times \SO(G)^{n} \to \textnormal{M}_{n}(\rmodule), \ \gmrhs{(f_{j})}{(g_{j})}{\Lac} = \textnormal{diag} \big( 
\sum_{j\in\Z_{n}} \grhs{f_{j}}{g_{j}}{\Lac} \big).\]
Note that, in general, $\gmlhs{\,\cdot\,}{\,\cdot\,}{\La}$ is a full matrix, where as $\gmrhs{\,\cdot\,}{\,\cdot\,}{\Lac}$ is a diagonal matrix.
By use of \eqref{eq:2302c} it is immediate that the matrix valued inner-products satisfy Rieffel's associativity condition
\begin{equation} \label{eq:associativity-for-mws} \gmlhs{(f_{j})}{(g_{j})}{\La}\cdot (h_{j}) = (f_{j}) \cdot \gmrhs{(g_{j})}{(h_{j})}{\Lac} \ \ \text{for all} \ \ (f_{j}),(g_{j}),(h_{j})\in \SO(G)^{n}.\end{equation}
The trace on elements in $\textnormal{M}_{n}(\lmodule)$ and $\textnormal{M}_{n}(\rmodule)$ are given by 
\begin{align*} 
& \tr_{\textnormal{M}(\lmodule)} : \textnormal{M}_{n} (\lmodule) \to \C , \ \tr_{\textnormal{M}(\lmodule)}\big( (\mathbf{a}_{j,k}) \big) = \sum_{j\in\Z_{n}} \tr_{\lmodule} (\mathbf{a}_{j,j}), \ \ (\mathbf{a}_{j,k})\in\textnormal{M}_{n} (\lmodule),  \\
& \tr_{\textnormal{M}(\rmodule)} : \textnormal{M}_{n} (\rmodule) \to \C , \ \tr_{\textnormal{M}(\rmodule)}\big( (\mathbf{b}_{j,k})\big) = \frac{1}{\NumWin} \sum_{j} \tr_{\rmodule} (\mathbf{b}_{j,j}), \ \ (\mathbf{b}_{j,k})\in\textnormal{M}_{n}(\rmodule).\end{align*}
Observe that
\begin{equation} \tr_{\textnormal{M}(\lmodule)} \big( \gmlhs{(f_{j})}{(g_{j})}{\La} \big)  = \tr_{\textnormal{M}(\rmodule)} \big( \gmrhs{(g_{j})}{(f_{j})}{\Lac} \big) = \sum_{j\in\Z_{\NumWin}} \langle f_{j},g_{j}\rangle \ \ \text{for all} \ \ (f_{j}),(g_{j})\in \SO(G)^{n}. \end{equation}
The matrix valued inner products allow us to define module norms on $\SO(G)^{n}$, 
\[ \|(g_{j})\|_\Lambda=\big\Vert \gmlhs{(g_{j})}{(g_{j})}{\La}\big\Vert^{1/2}_{\textnormal{op},L^{2}}, \ \|g\|_{\Lambda^\circ}=\big\Vert\gmrhs{(g_{j})}{(g_{j})}{\Lac}\big\Vert^{1/2}_{\textnormal{op},L^{2}}.\]
The family of functions $\{ \pi (\lambda) g_{j} \}_{\lambda \in \Lambda,j\in\Z_{n}}$ generated by an $n$-tuple $(g_j)$ in $\SO(G)^n$ and by a closed sungroup $\Lambda$ in $G\times\ghat$ is a  \emph{multi-window Gabor system} . 

\begin{definition} \label{def:gabor_frame}
A multi-window Gabor system $\{ \pi (\lambda) g_{j} \}_{\lambda \in \Lambda,j\in\Z_{n}}$ is a \emph{Gabor frame} for $L^2(G)$ if there exists constants $A, B > 0$ such that either of the following equivalent conditions are satisfied.
\begin{enumerate}
\item[(i)] For all $f\in L^{2}(G)$ 
\begin{align} \label{eq:framecondition} \textstyle
A \| f \|_{L^2(G)}^2 \leq \int_{\Lambda} |\langle f, \pi(\lambda) g \rangle |^2 \, d\mu_{\Lambda} (\lambda) \leq B \| f \|_{L^2(G)}^2.
\end{align}  
\item[(ii)]For all $f\in \SO(G)$\[ \textstyle A \, \tr_{\lmodule} \big( \glhs{f}{f}{\La}\big) \le \sum\limits_{j\in\Z_{n}} \tr_{\lmodule} \big(\glhs{f}{g_{j}}{\La}\glhs{g_{j}}{f}{\La}\big) \le B \, \tr_{\lmodule} \big( \glhs{f}{f}{\La}\big).\]
\item[(iii)] For all $(f_{j})\in \SO(G)^{n}$
\[ \textstyle A \, \tr_{\textnormal{M}(\lmodule)} \big( \gmlhs{(f_{j})}{(f_{j})}{\La}\big) \le \tr_{\textnormal{M}(\lmodule)} \big(\gmlhs{(f_{j})}{(g_{j})}{\La}\gmlhs{(g_{j})}{(f_{j})}{\La}\big) \le B \, \tr_{\textnormal{M}(\lmodule)} \big( \gmlhs{(f_{j})}{(f_{j})}{\La}\big).\]
\end{enumerate}
The constants $A$ and $B$ are called \emph{lower} and \emph{upper frame bounds}, respectively. 
\end{definition}
The largest possible value for $A$, $A_{\textnormal{opt}}$, is the optimal lower frame bound and the smallest possible valued for $B$, $B_{\textnormal{opt}}$, is the optimal upper frame bound. As shown in Lemma 3.6 and Remark 3.13 of \cite{jalu18}, $B_{\textnormal{opt}} = \Vert (g_{j}) \Vert_{\La}$.
A Gabor frame for which the frame bounds are equal is called \emph{tight};  it is called \emph{Parseval} if its frame bound equals 1. 

Necessary conditions for $(g_{j})$ and $\La$ to generate a Gabor frame are that the group $(G\times\ghat)/\La$ is compact and that $A \, \textnormal{s}(\La) \le \sum_{j} \Vert g_{j} \Vert_{2}^{2} \le B \, \textnormal{s}(\La)$. In case $\La$ is discrete and equipped with the counting measure, then furthermore, the condition $\textnormal{s}(\La)<n$ is necessary.

The Gabor system $\{\pi(\lambda) g_{j}\}_{\la\in\La,j\in\Z_n}$ is a Gabor frame for $L^{2}(G)$ if and only if the associated \emph{frame operator} $S_{(g_{j}),\La}$ is a continuous invertible bijection on both $L^{2}(G)$ and $\SO(G)$ \cite{grle04}. The frame operator is given by any of the following expressions, for any $f\in\SO(G)$,
\begin{align*} S_{(g_{j}),\La} (f) & := \sum_{j\in\Z_{n}}\int_{\La} \langle f, \pi(\la) g_{j} \rangle \pi(\la) g_{j} \, d\la = \sum_{j\in\Z_{n}}\glhs{f}{g_{j}}{\La} \cdot g_{j} \\
& =  f \cdot \sum_{j\in\Z_{n}} \grhs{g_{j}}{g_{j}}{\Lac} = \frac{1}{\textnormal{s}(\La)} \sum_{j\in\Z_n} \sum_{\lac\in\Lac} \langle g_{j} , \pi(\lac)^{*} g_{j}\rangle \pi(\lac)^{*} f.\end{align*}

If $(g_{j})\in\SO(G)^{n}$ and $\La$ generate a Gabor frame for $L^{2}(G)$, then there exist  functions $(h_{j})\in\SO(G)^{n}$ such that the following equivalent statements hold,
\begin{enumerate}
    \item[(i)] $f = \int_{\La} \langle f, \pi(\lambda) g_{j} \rangle \pi(\la) h_{j}  \, d\la$ for all $f\in L^{2}(G)$,
    \item[(ii)] $f = \sum_{j\in\Z_{n}}\glhs{f}{g_{j}}{\La}\cdot h_{j}$ for all $f\in \SO(G)$,
    \item[(iii)] $(f_{j}) = \gmlhs{(f_{j})}{(g_{j})}{\La}\cdot (h_{j})$ for all $(f_{j})\in \SO(G)^{n}$.
\end{enumerate}
In that case we say that $(g_{j})$ and $(h_{j})$ are a \emph{dual pair} of Gabor frame generators and that $\{\pi(\la) g_{j}\}_{\la\in\La,j\in\Z_{n}}$ and $\{\pi(\la) g_{j}\}_{\la\in\La,j\in\Z_{n}}$ are \emph{dual} Gabor frames for $L^{2}(G)$. The canonical choice of the functions $(h_{j})$ is the \emph{canonical dual frame}: the Gabor frame generated by the functions $(h_{j}) = (S_{(g_{j}),\La}^{-1} g_{j})$.

The following is an adaptation of Corollary 3.15 in \cite{jalu18}.
\begin{lemma} \label{le:wex-raz} Let $\Lambda$ be a closed co-compact subgroup of $G\times\ghat$ and let $(g_{j})$ and $(h_{j})$ be $n$-tuples in $\SO(G)^{n}$. The following statements are equivalent.
\begin{enumerate}[(i)]
\item $f = \sum\limits_{j\in\Z_{\NumWin}} \glhs{f}{\vvfun{g}{j}}{\La}\cdot \vvfun{h}{j}$ for all $f\in\SO(G)$.
\item $\sum\limits_{j\in\Z_{\NumWin}} \grhs{\vvfun{g}{j}}{\vvfun{h}{j}}{\Lac}$ is the identity operator on $L^{2}(G)$.
\item $({g}_{j})$ and $({h}_{j})$ generate dual multi-window Gabor frames with respect to $\La$ for $L^{2}(G)$.
\item the $\lmodule$-valued $\NumWin\times\NumWin$-matrix
\[ \gmlhs{(g_{j})}{(h_{j})}{\La} = \begin{bmatrix} \glhs{\vvfun{g}{1}}{\vvfun{h}{1}}{\La} & \glhs{\vvfun{g}{1}}{\vvfun{h}{2}}{\La} & \cdots & \glhs{\vvfun{g}{1}}{\vvfun{h}{\NumWin}}{\La} \\ 
\glhs{\vvfun{g}{2}}{\vvfun{h}{1}}{\La} & \glhs{\vvfun{g}{2}}{\vvfun{h}{2}}{\La} & \cdots & \glhs{\vvfun{g}{2}}{\vvfun{h}{\NumWin}}{\La} \\
\vdots & \vdots & \ddots & \vdots \\
\glhs{\vvfun{g}{\NumWin}}{\vvfun{h}{1}}{\La} & \glhs{\vvfun{g}{\NumWin}}{\vvfun{h}{2}}{\La} & \cdots & \glhs{\vvfun{g}{\NumWin}}{\vvfun{h}{\NumWin}}{\La}
\end{bmatrix}\]
is an idempotent operator from $L^{2}(G)^{n}$ onto $
V := \overline{\textnormal{span}}\big\{ \oplus_{j\in\Z_{\NumWin}} \pi(\lambda^{\circ})^{*} \vvfun{g}{j} \big\}_{\lambda^{\circ}\in\La^{\circ}}$.
\end{enumerate}
\end{lemma}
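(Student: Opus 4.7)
The plan is to prove the four characterizations equivalent by a chain that hinges on the fundamental identity of Gabor analysis \eqref{eq:2302c}, its matrix-valued analogue \eqref{eq:associativity-for-mws}, density of $\SO(G)$ in $L^2(G)$, and the commutation relation $\pi(\lac)^{*} \mathbf{a} = \mathbf{a} \,\pi(\lac)^{*}$ for all $\mathbf{a}\in\lmodule$ and $\lac\in\Lac$ (which follows directly from the defining property $\cocy_s(\lac,\lambda)=1$ of the adjoint group).

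First I would prove (i) $\Leftrightarrow$ (ii). Summing the associativity identity $\glhs{f}{g_j}{\La}\cdot h_j = f\cdot \grhs{g_j}{h_j}{\Lac}$ over $j\in\Z_n$ rewrites (i) as $f = f\cdot\sum_{j\in\Z_n}\grhs{g_j}{h_j}{\Lac}$ for every $f\in\SO(G)$. Because $\sum_{j}\grhs{g_j}{h_j}{\Lac}\in\rmodule\subset\mathsf{B}(L^2(G))$ is bounded and $\SO(G)\subset L^2(G)$ is dense, this identity on $\SO(G)$ holds if and only if the sum equals the identity on $L^2(G)$, i.e.\ (ii). The equivalence (i) $\Leftrightarrow$ (iii) is then essentially a density argument together with the definition of dual multi-window Gabor frames recalled just before the lemma: (i) is the restriction of the dual reconstruction to $\SO(G)$, and this extends to all of $L^2(G)$ by the boundedness of the frame operators $S_{(g_j),\La}$, $S_{(h_j),\La}$ (whose norms are controlled by the finite quantities $\|(g_j)\|_\La$, $\|(h_j)\|_\La$) and the density of $\SO(G)$.

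The more substantial part is (ii) $\Leftrightarrow$ (iv). Write $M := \gmlhs{(g_j)}{(h_j)}{\La}$. Matrix associativity \eqref{eq:associativity-for-mws} gives, for every $(f_k)\in L^2(G)^n$,
\[ M\cdot(f_k)_k = \Big(\sum_{k\in\Z_n}\glhs{g_j}{h_k}{\La}\cdot f_k\Big)_j = \Big(g_j\cdot \mathbf{c}(f)\Big)_j,\quad \mathbf{c}(f):=\sum_{k\in\Z_n}\grhs{h_k}{f_k}{\Lac}\in\rmodule. \]
Since $\La$ is co-compact, $\Lac$ is discrete, so every $\mathbf{b}\in\rmodule$ is of the form $\tfrac{1}{s(\La)}\sum_{\lac}b(\lac)\pi(\lac)^{*}$; therefore $(g_j\cdot\mathbf{c}(f))_j$ automatically lies in $V = \overline{\textnormal{span}}\{\oplus_j \pi(\lac)^{*} g_j\}_{\lac\in\Lac}$, so the range of $M$ is contained in $V$ unconditionally. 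Assuming (ii) and adjoining it yields $\sum_{k}\grhs{h_k}{g_k}{\Lac} = I$; evaluating $M$ at a generator $(\pi(\lac)^{*}g_k)_k$ of $V$ and commuting $\pi(\lac)^{*}$ past each $\glhs{g_j}{h_k}{\La}\in\lmodule$ yields $M\cdot(\pi(\lac)^{*}g_k)_k = (\pi(\lac)^{*}g_j)_j$, so $M|_V = \mathrm{id}_V$; this forces both idempotency $M^2=M$ and image$(M)=V$, proving (iv). For the converse, from (iv) one has $M|_V=\mathrm{id}_V$, hence $M\cdot(g_k)=(g_k)$, giving $g_j\cdot\mathbf{c}(g)=g_j$ for every $j$; together with the surjectivity image$(M)=V$ and the density of the linear span of $\{g_j\cdot\mathbf{b}:\mathbf{b}\in\rmodule,\,j\in\Z_n\}$ in $L^2(G)^n$ inherited from $V$, this promotes to $\mathbf{c}(g)=I$ on $L^2(G)$, which after adjoining is precisely~(ii).

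The step I expect to be the main obstacle is the (iv) $\Rightarrow$ (ii) implication: moving from the operator identity $g_j\cdot \mathbf{c}(g) = g_j$ on the finite tuple of generators to the global statement $\mathbf{c}(g)=I$ on all of $L^2(G)$. This is where both parts of (iv), namely idempotency \emph{and} surjectivity onto $V$, are genuinely needed, and where the compatibility between the $\lmodule$-structure and the right action of $\rmodule$ (the full associativity package) must be combined with the commutation relation between $\pi(\lac)^{*}$ and $\lmodule$. If this direct route proves awkward, a pragmatic alternative is to pair $M$ with the single-component tuples $(f,0,\ldots,0)$ to reduce (iv) to condition (i) directly, and then invoke the already-established chain (i) $\Leftrightarrow$ (ii).
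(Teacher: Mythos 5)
The paper does not actually prove this lemma --- it is quoted as an adaptation of Corollary 3.15 of \cite{jalu18} --- so your argument can only be measured against that reference. Most of your proposal is sound: the equivalence (i)$\Leftrightarrow$(ii) via summing the fundamental identity over $j$ and using density of $\SO(G)$ in $L^{2}(G)$, the passage (i)$\Leftrightarrow$(iii) via the Bessel bounds guaranteed by Lemma \ref{le:so-implies-bessel} and Lemma \ref{le:frame-bound-from-dual-system}, and the implication (ii)$\Rightarrow$(iv) are all correct. In particular, your observation that the range of $M:=\gmlhs{(g_{j})}{(h_{j})}{\La}$ always lies in $V$ because the \emph{same} element $\mathbf{c}(f)\in\rmodule$ multiplies every component $g_{j}$, and that $M$ fixes each generator $\oplus_{j}\pi(\lac)^{*}g_{j}$ once $\sum_{k}\grhs{h_{k}}{g_{k}}{\Lac}=I$, is exactly the right mechanism.

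The genuine gap is in (iv)$\Rightarrow$(ii), precisely where you anticipated trouble. After your (correct) reductions, all that (iv) yields is the relation $g_{j}\cdot\mathbf{c}=g_{j}$ for every $j$, with $\mathbf{c}=\sum_{k}\grhs{h_{k}}{g_{k}}{\Lac}$; evaluating $M$ on the remaining generators $\oplus_{j}\pi(\lac)^{*}g_{j}$ only reproduces this same relation conjugated by $\pi(\lac)^{*}$, so no further information is extracted. To promote $g_{j}\cdot\mathbf{c}=g_{j}$ to $\mathbf{c}=I$ you appeal to ``the density of the linear span of $\{g_{j}\cdot\mathbf{b}\}$ in $L^{2}(G)^{n}$ inherited from $V$''. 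This does not work: $V$ is in general a \emph{proper} closed subspace of $L^{2}(G)^{n}$ (when the $(g_{j})$ generate a frame, $\{\oplus_{j}\pi(\lac)^{*}g_{j}\}_{\lac}$ is a Riesz sequence, not a complete system), and the sets $g_{j}\cdot\rmodule$ are subsets of $L^{2}(G)$ rather than of $L^{2}(G)^{n}$, so the density claim is both ill-posed and false. What the implication really requires is that the expansion of $\oplus_{j}g_{j}$ in the vectors $\oplus_{j}\pi(\lac)^{*}g_{j}$ have \emph{unique} $\ell^{1}$-coefficients --- equivalently, the Wexler--Raz biorthogonality and duality-principle machinery of \cite{jalu18}, or faithfulness of the right action of $\rmodule$ on $L^{2}(G)$ combined with surjectivity of the frame operator $f\mapsto f\cdot\sum_{j}\grhs{g_{j}}{g_{j}}{\Lac}$. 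None of this appears in your argument. Your fallback of feeding $M$ the tuples $(f,0,\dots,0)$ does not repair it either: $M\cdot(f,0,\dots,0)=\big(g_{j}\cdot\grhs{h_{1}}{f}{\Lac}\big)_{j}$, in which $f$ sits inside the $\rmodule$-valued inner product and the $g_{j}$ are the outputs, so this is not the reconstruction formula of (i), where the roles are reversed.
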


Given an $n$-tuple of functions $(g_{j})\in\SO(G)^{n}$ and a closed subgroup $\La$, we define the constant
\[ B((g_{j}),\La) := \frac{1}{\textnormal{s}(\La)} \sum_{j\in\Z_{n}} \Vert \grhs{g_{j}}{g_{j}}{\Lac} \Vert_{\rmodule} = \frac{1}{\textnormal{s}(\Lambda)} \sum_{j\in\Z_{n}} \sum_{\lac\in\Lac} \big\vert \langle g_{j}, \pi(\lac)^{*} g_{j}\rangle\big\vert \]
The properties of $\SO$ imply that this quantity in fact is finite.
\begin{lemma} \label{le:so-implies-bessel} For any $n$-tuple of functions $(g_{j})\in \SO(G)^{n}$ and closed subgroup $\Lambda$ of $G\times\ghat$ the Gabor system $\{\pi(\lambda) g_{j} \}_{\lambda\in\Lambda,j\in\Z_{n}}$ satisfies the upper frame inequality. In fact,  
\[ \Vert (g_{j}) \Vert_{\La} = \Vert (g_{j})\Vert_{\Lac} = B_{\textnormal{opt}} \le B((g_{j}),\La).\] \end{lemma}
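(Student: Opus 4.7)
The plan is to concentrate on the third inequality $B_{\textnormal{opt}}\le B((g_{j}),\La)$, since the equalities $\Vert(g_{j})\Vert_{\La}=\Vert(g_{j})\Vert_{\Lac}=B_{\textnormal{opt}}$ were attributed to Lemma~3.6 and Remark~3.13 of \cite{jalu18} earlier in the paper. Because the expression defining $B((g_{j}),\La)$ carries the factor $1/\s(\La)$ and an $\ell^{1}$-sum over $\Lac$, the meaningful regime is $\La$ co-compact, so Remark \ref{rem:orth-measure-for-cocompact} applies and $\Lac$ is discrete with Haar measure $\s(\La)^{-1}$ times the counting measure.

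Step one is the finiteness of $B((g_{j}),\La)$. For each $g_{j}\in\SO(G)$, the short-time Fourier transform $\mathcal{V}_{g_{j}}g_{j}$ lies in $\SO(G\times\ghat)$ by Lemma \ref{le:SO-properties}(viii); restricting this function to the closed subgroup $\Lac$ via Lemma \ref{le:SO-properties}(vi) produces an element of $\SO(\Lac)$, which by discreteness coincides with $\ell^{1}(\Lac)$ thanks to Lemma \ref{le:SO-properties}(ii). Hence $\sum_{\lac\in\Lac}|\langle g_{j},\pi(\lac)^{*}g_{j}\rangle|<\infty$ for each $j$, and summing over the finite index set $j\in\Z_{n}$ shows $B((g_{j}),\La)<\infty$.

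Step two uses the Walnut-type expansion of the multi-window frame operator already displayed just before the lemma. By the associativity identity \eqref{eq:2302c} one has $S_{(g_{j}),\La}(f)=\sum_{j}\glhs{f}{g_{j}}{\La}\cdot g_{j}=f\cdot\sum_{j}\grhs{g_{j}}{g_{j}}{\Lac}$, and combined with Remark \ref{rem:orth-measure-for-cocompact} this gives
\[ S_{(g_{j}),\La}(f)=\frac{1}{\s(\La)}\sum_{j\in\Z_{n}}\sum_{\lac\in\Lac}\langle g_{j},\pi(\lac)^{*}g_{j}\rangle\,\pi(\lac)^{*}f.\]
Since each $\pi(\lac)^{*}$ is unitary on $L^{2}(G)$, the triangle inequality yields $\Vert S_{(g_{j}),\La}(f)\Vert_{2}\le B((g_{j}),\La)\,\Vert f\Vert_{2}$, so $\Vert S_{(g_{j}),\La}\Vert_{\textnormal{op},L^{2}}\le B((g_{j}),\La)$. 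Together with the standard identity $\langle S_{(g_{j}),\La}f,f\rangle=\sum_{j}\int_{\La}|\langle f,\pi(\la)g_{j}\rangle|^{2}\,d\mu_{\La}(\la)$, this gives the upper frame inequality with constant $B((g_{j}),\La)$, whence $B_{\textnormal{opt}}\le B((g_{j}),\La)$.

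The only real obstacle is the interchange of summation implicit in step two, which requires absolute convergence of the Walnut series; this is precisely what the $\SO$-hypothesis supplies through the chain (viii) $\to$ (vi) $\to$ (ii) of Lemma \ref{le:SO-properties}. Once absolute convergence is granted, the remainder of the argument is a one-line triangle-inequality estimate together with the unitarity of the time-frequency shifts.
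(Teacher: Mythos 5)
Your argument is correct, and it is worth noting that the paper itself does not actually prove this lemma: its ``proof'' is the single line ``This follows from the proof of Lemma 4.26 in \cite{jalu18}.'' What you have written is, in effect, the standard Janssen-representation argument that the citation outsources: finiteness of $B((g_{j}),\La)$ via the chain $\mathcal{V}_{g_{j}}g_{j}\in\SO(G\times\ghat)$, restriction to $\Lac$ lands in $\SO(\Lac)=\ell^{1}(\Lac)$, followed by the triangle-inequality estimate on the expansion $S_{(g_{j}),\La}f=\s(\La)^{-1}\sum_{j}\sum_{\lac\in\Lac}\langle g_{j},\pi(\lac)^{*}g_{j}\rangle\,\pi(\lac)^{*}f$, which the paper displays just before the lemma. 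Your observation that the statement is only meaningful for co-compact $\La$ (so that $\Lac$ is discrete and $\s(\La)<\infty$) is accurate and matches how the lemma is used in Theorems \ref{th:sampl-non-sep} and \ref{th:period-non-sep}. Two routine points deserve one explicit sentence each rather than being left implicit: the Janssen expansion and hence the bound $\Vert S_{(g_{j}),\La}f\Vert_{2}\le B((g_{j}),\La)\Vert f\Vert_{2}$ are a priori established only for $f\in\SO(G)$, so one passes to all of $L^{2}(G)$ by density of $\SO(G)$ together with Fatou's lemma applied to $\int_{\La}\vert\langle f,\pi(\la)g_{j}\rangle\vert^{2}\,d\mu_{\La}(\la)$; and the identity $\langle S_{(g_{j}),\La}f,f\rangle=\sum_{j}\int_{\La}\vert\langle f,\pi(\la)g_{j}\rangle\vert^{2}\,d\mu_{\La}(\la)$ requires interchanging the integral over $\La$ with the inner product, which is justified because $\mathcal{V}_{g_{j}}f$ restricted to $\La$ lies in $\SO(\La)\subseteq L^{1}(\La)\cap L^{2}(\La)$ for $f\in\SO(G)$. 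With those two sentences added, the proof is complete and self-contained, which is more than the paper offers.
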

\begin{proof}
This follows from the proof of Lemma 4.26 in \cite{jalu18}.
\end{proof}

\begin{lemma} \label{le:frame-bound-from-dual-system} Let $(g_{j})$ and $(h_{j})$ be functions in $\SO(G)^{n}$ that generate dual Gabor frames for $L^{2}(G)$. If the Gabor system generated by $(h_{j})$ has an upper frame bound $B_{h}$, then $B_{h}^{-1}$ is a lower frame bound for the Gabor system generated by $(g_{j})$.
\end{lemma}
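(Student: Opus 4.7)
The plan is to use the reconstruction formula and polarize it into an inner product with $f$ itself, then apply Cauchy--Schwarz together with the upper frame bound for $(h_j)$ to produce the desired lower bound for $(g_j)$.

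First, since $(g_j)$ and $(h_j)$ are a dual pair of Gabor frame generators for $L^2(G)$, the reconstruction formula holds in $L^2(G)$:
\[
f \;=\; \sum_{j\in\Z_n}\int_{\Lambda}\langle f,\pi(\lambda)g_j\rangle\,\pi(\lambda)h_j\,d\mu_\Lambda(\lambda), \qquad f\in L^2(G).
\]
Taking the $L^2$-inner product with $f$ on both sides and using continuity of the inner product to exchange summation/integration with the inner product yields
\[
\|f\|_{L^2}^2 \;=\; \sum_{j\in\Z_n}\int_{\Lambda}\langle f,\pi(\lambda)g_j\rangle\,\overline{\langle f,\pi(\lambda)h_j\rangle}\,d\mu_\Lambda(\lambda).
\]

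Second, apply the Cauchy--Schwarz inequality in $L^2(\Lambda\times\Z_n)$ to the right-hand side:
\[
\|f\|_{L^2}^2 \;\leq\; \Bigl(\sum_{j}\int_{\Lambda}|\langle f,\pi(\lambda)g_j\rangle|^2 d\mu_\Lambda\Bigr)^{1/2}\Bigl(\sum_{j}\int_{\Lambda}|\langle f,\pi(\lambda)h_j\rangle|^2 d\mu_\Lambda\Bigr)^{1/2}.
\]
By the assumed upper frame inequality for the $(h_j)$-system, the second factor is at most $\sqrt{B_h}\,\|f\|_{L^2}$.

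Third, dividing through by $\|f\|_{L^2}$ (trivially for $f=0$), squaring, and rearranging yields
\[
B_h^{-1}\,\|f\|_{L^2}^2 \;\leq\; \sum_{j\in\Z_n}\int_{\Lambda}|\langle f,\pi(\lambda)g_j\rangle|^2\,d\mu_\Lambda(\lambda),
\]
which is precisely the statement that $B_h^{-1}$ is a lower frame bound for $\{\pi(\lambda)g_j\}_{\lambda\in\Lambda,\,j\in\Z_n}$.

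There is essentially no obstacle here: the only subtlety is justifying the exchange of the inner product with the integral/sum in step one. Since the reconstruction series converges unconditionally in $L^2(G)$ (a consequence of $(g_j),(h_j)\in\SO(G)^n$ generating dual frames, cf.\ the frame-operator discussion preceding Lemma~\ref{le:wex-raz}), the interchange is legitimate, and the remainder of the argument is a one-line application of Cauchy--Schwarz.
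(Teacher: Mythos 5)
Your argument is correct and is precisely the standard frame-theoretic proof that the paper invokes by citation (it gives no proof of its own, referring instead to Christensen's book, where this same reconstruction-plus-Cauchy--Schwarz argument appears). Nothing further is needed.
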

\begin{proof}
This is a general result of frame theory and can be found in, e.g, \cite{ch16}. 
\end{proof}

\begin{lemma} \label{le:canonical-dual-form} Let $(g_{j})\in\SO(G)^{n}$ and $\La$ be a closed subgroup of $G\times\ghat$ such that the Gabor system $\{\pi(\lambda) g_j \}_{\lambda\in\Lambda,j\in\Z_{n}}$ is a Gabor frame for $L^{2}(G)$. The canonical dual generators $(h_{j}) = (S_{(g_{j}),\La}^{-1}g_{j})$ are the unique dual generators that lie in the closed subspace of $L^{2}(G)$ given by $\overline{\textnormal{span}} \{\pi(\lambda^{\circ})^{*} g_{j} \}_{\lac\in\Lac,j\in\Z_{n}}$.
\end{lemma}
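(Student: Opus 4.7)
The plan is to verify the two halves of the statement separately: first that the canonical dual tuple $(h_j^{c}) := (S_{(g_j),\La}^{-1}g_j)$ lies in the specified closed span, and second that it is the only such dual tuple.

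For the first half, I would rewrite the frame operator in its right-action form $S_{(g_j),\La} f = f \cdot \mathbf{B}$, where $\mathbf{B} := \sum_{j\in\Z_n} \grhs{g_j}{g_j}{\Lac} \in \rmodule$. Since the Gabor system is a frame, $S_{(g_j),\La}$ is invertible on $L^{2}(G)$, and by the spectral invariance of $\rmodule$ inside $\mathsf{B}(L^{2}(G))$ (the content of \cite{grle04}, already invoked before Lemma \ref{le:wex-raz} to give invertibility on $\SO(G)$), the inverse $\mathbf{B}^{-1}$ again lies in $\rmodule$ and admits a representation $\mathbf{B}^{-1} = \int_{\Lac} \mathsf{b}^{-1}(\lac)\,\pi(\lac)^{*}\,d\lac$ with coefficient $\mathsf{b}^{-1} \in \SO(\Lac)$. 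Consequently
\[ h_j^{c} \;=\; g_j \cdot \mathbf{B}^{-1} \;=\; \int_{\Lac} \mathsf{b}^{-1}(\lac)\,\pi(\lac)^{*} g_j\,d\lac, \]
exhibiting each $h_j^{c}$ as a norm-limit of finite combinations of $\pi(\lac)^{*}g_j$, and the whole tuple as a norm-limit of the diagonal-type elements $\oplus_{j}\pi(\lac)^{*}g_j$, hence belonging to the closed span both component-wise in $L^{2}(G)$ and as a tuple in the sense of Lemma \ref{le:wex-raz}(iv).

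For uniqueness, let $(h_j')$ be any other dual tuple lying in the closed span, and set $u_j := h_j^{c} - h_j'$. Both tuples being dual yields $\sum_{j} \grhs{g_j}{u_j}{\Lac} = 0$. Unfolding this operator identity and extracting the coefficient of each $\pi(\lac)^{*}$ (for example by pairing with $\pi(\mu^{\circ})$ inside the $\rmodule$-trace and using the Poisson identity of Lemma \ref{le:s0-properties}(ix)) produces
\[ \sum_{j\in\Z_{n}} \langle u_j,\pi(\lac)^{*} g_j\rangle \;=\; 0 \quad\text{for every } \lac\in\Lac. \]
This is precisely the statement that the tuple $(u_j)$ is orthogonal in $L^{2}(G)^{n}$ to every $\oplus_{j}\pi(\lac)^{*}g_j$, so $(u_j)$ lies in the orthogonal complement of the closed span; combined with $(u_j)$ itself lying in the closed span, this forces $u_j = 0$ for each $j$.

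The two delicate points are the spectral-invariance step $\mathbf{B}^{-1}\in\rmodule$, which is what actually places $h_j^{c}$ inside the closed span rather than merely in a closure taken in a weaker operator topology, and the reduction of the operator identity $\sum_j\grhs{g_j}{u_j}{\Lac}=0$ to the tuple-orthogonality above, which is the multi-window analogue of the Wexler--Raz biorthogonality and is the structural heart of the uniqueness assertion.
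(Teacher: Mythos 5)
The paper does not actually prove this lemma --- it simply cites Lemma 4.15 of \cite{jalu18} --- so there is no internal argument to compare yours against; what you have written is a correct reconstruction of the standard proof, and the two steps you flag as delicate are indeed the right ones. The identity $S_{(g_j),\La}f=f\cdot\mathbf{B}$ with $\mathbf{B}=\sum_j\grhs{g_j}{g_j}{\Lac}$ is stated explicitly in Section \ref{sec:frames-and-modules}, the spectral invariance of $\rmodule$ is exactly the ingredient (from \cite{grle04} and its extension in \cite{jalu18}) that puts $\mathbf{B}^{-1}$ back into $\rmodule$ and hence $h_j^{c}=g_j\cdot\mathbf{B}^{-1}$ into the closed span, and the injectivity of $\mathsf{b}\mapsto\int_{\Lac}\mathsf{b}(\lac)\pi(\lac)^{*}\,d\lac$ (implicit in the very definition of the norm and trace on $\rmodule$) justifies extracting the coefficients from $\sum_j\grhs{g_j}{u_j}{\Lac}=0$. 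One point deserves to be made explicit rather than left to the phrase ``in the sense of Lemma \ref{le:wex-raz}(iv)'': for $n\ge 2$ the uniqueness assertion is only true if ``lying in the closed span'' is interpreted as the \emph{tuple} $(h_j)$ belonging to $V=\overline{\textnormal{span}}\{\oplus_{j}\pi(\lac)^{*}g_j\}_{\lac\in\Lac}\subset L^{2}(G)^{n}$, which is the space your orthogonality argument actually uses. Under the literal componentwise reading of the statement (each $h_j$ in $\overline{\textnormal{span}}\{\pi(\lac)^{*}g_k\}_{\lac\in\Lac,k\in\Z_n}$) uniqueness fails in general, because the single family of scalar constraints $\sum_{j}\langle u_j,\pi(\lac)^{*}g_j\rangle=0$, $\lac\in\Lac$, cannot force $n$ independent components to vanish; so your choice of $V$ is not merely convenient but necessary, and stating this explicitly would close the only gap between your argument and the (loosely phrased) claim.
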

\begin{proof} See Lemma 4.15 in \cite{jalu18}.
\end{proof}

\section{Sampling of Gabor frames}
\label{sec:sampling}
Let $(g_{j})$ and $(h_{j})$ be $n$-tuples in $\SO(G)^{n}$ and let $\La$ be some closed co-compact subgroup of $G\times\ghat$. 

We show that, if $\{\pi(\lambda) g_{j}\}_{\lambda\in\Lambda,j\in\Z_{n}}$ and $\{\pi(\la) h_{j}\}_{\la\in\La,j\in\Z_{n}}$ are dual multi-window Gabor frames for $L^{2}(G)$, or equivalently $\gmlhs{(g_{j})}{(h_{j})}{\La}$ is an idempotent element of $\textnormal{M}_{n}(\lmodule)$, then, under certain assumptions, the functions obtained by restriction of the generators to a closed subgroup $H$ of $G$ preserves these properties.

In order to formulate the result we need a way to think of elements in the time-frequency plane of $H$, $H\times\hhat$, as elements of the time-frequency plane of $G$, $G\times\ghat$. We do this by constructing an injection from $H\times\hhat$ into $G\times\ghat$:
\begin{remark}\label{rem:1901} Let $H$ be a closed subgroup of $G$. 
Note that $\hhat$ can be identified (as a toplogical group) with the quotient group $\ghat/H^{\perp}$. This quotient group has a set of coset representatives $K_{H^{\perp}}$ in $\ghat$. If we \emph{fix} such a set of coset representatives, then every coset in $\ghat/H^{\perp}$ has a unique representation as $k + H^{\perp}$, where $k\in K_{H^{\perp}}$. This establishes a bijection between $\ghat/H^{\perp}$ and $K_{H^{\perp}}$.  
Due to the isomorphism between $\ghat/H^{\perp}$ and $\hhat$ we can define a bijection
\[ \phi : \hhat \to K_{H^{\perp}}\subseteq \ghat, \ \phi(\omega) = k.\]
For our purposes we will always take $K_{H^{\perp}}$ so that $0\in K_{H^{\perp}}$. This is always possible. Observe that this implies that $\phi(0) = 0$. 
For any given character $\omega\in\hhat$ the element $\phi(\omega)$ is an extension of $\omega$ to a character on $G$. It is clear that this extension crucially depends on the choice of $K_{H^{\perp}}$. 

With the identification $\phi$ between $\hhat$ and $K_{H^{\perp}}$ we construct the injective operator 
\[ \Phi : H \times\hhat \to H\times K_{H^{\perp}}\subseteq G \times\ghat, \ \Phi( x, \omega) = \big( x, \phi(\omega) \big) , \ x\in H, \ \omega\in\hhat.\]
This operator allows us to regard elements of $H\times\hhat$ as elements of $G\times\ghat$. Observe that $\Phi(0) = 0$. Furthermore, for any $\chi = (x,\omega)\in H\times\hhat$ and any $f\in \SO(G)$
\begin{equation} \label{eq:pi-phi-R-intertwining} \pi(\chi) \restrict{H} f = \restrict{H} \pi(\Phi(\chi)) f.\end{equation}
Note that the time-frequency shift on the left acts on functions on $\SO(H)$, where as to the right, the time-frequency shift acts on functions in $\SO(G)$.

A side remark: it is possible to take $K_{H^{\perp}}$ to be a measurable subset of $\ghat$ (we do not require this extra property for our purposes). In that case $\phi$ is a measure preserving map between the measure spaces $\ghat/H^{\perp}$ (with its Haar measure) and $K_{H^{\perp}}$ (with the measure it inherits from $\ghat$). We refer to \cite[Section 3]{boro15} for more details on this.
\end{remark}

In the following we denote the Banach algebra generated by time-frequency shifts of a subgroup $\Lambda$ of $G\times\ghat$ by $\lmodule^{G}$, and the Banach algebra generated by time-frequency shifts of a subgroup $\tilde{\La}$ of $H\times\hhat$ by $\lmodule^{H}$.

The sampling theorem for Gabor frames and generators of Heisenberg modules reads as follows.

\begin{theorem}\label{th:sampl-non-sep} Let $\Lambda$ be a closed co-compact subgroup of $G\times\ghat$, and let $(g_{j})$ and $(h_{j})$ be $n$-tuple in $\SO(G)^{n}$. Assume that $\gmlhs{(g_{j})}{(h_{j})}{\La}$ is an idempotent element of $\textnormal{M}_{n}(\lmodule^{G})$, i.e., $\{\pi(\la) g_{j}\}_{\la\in\La,j\in\Z_{n}}$ and $\{\pi(\la) h_{j}\}_{\la\in\La,j\in\Z_{n}}$ are dual Gabor frames for $L^{2}(G)$.
If the following two assertions are satisfied,
\begin{itemize}
\item[(i)] $H$ is a closed cocompact subgroup of $G$ such that $\Lambda \subseteq H\times\ghat$,
\item[(ii)] $\tilde{\Lambda}$ is a closed cocompact subgroup of $H\times\widehat{H}$ such that $\Phi(\tilde{\Lambda}^{\circ}) \subseteq \Lambda^{\circ}$,
\end{itemize}
then the $n$-tuple
$(\tilde{g}_{j})$ and $(\tilde{h}_{j})$ in $\SO(H)^{n}$ given by $\tilde{g}_{j} = c\, \restrict{H}g_{j}$, $\tilde{h}_{j} = c\, \restrict{H}h_{j}$, and where 
\[ c= \big( \textnormal{s}_{H\times\hhat}(\tilde{\Lambda}) \, \textnormal{s}_{G}(H) / \textnormal{s}_{G\times\ghat}(\Lambda)\big)^{1/2},\] 
are such that $\gmlhs{(\tilde{g}_{j})}{(\tilde{h}_{j})}{\widetilde{\La}}$ is an idempotent element of $\textnormal{M}_{n}(\lmodule^{H})$, i.e., the two Gabor systems $\{\pi(\la) \tilde{g}_{j}\}_{\la\in\widetilde{\La},j\in\Z_{n}}$ and $\{\pi(\la) \tilde{h}_{j}\}_{\la\in\widetilde{\La},j\in\Z_{n}}$ are dual frames for $L^{2}(H)$.
Moreover, the optimal frame bounds $A_{\textnormal{opt}}$ and $B_{\textnormal{opt}}$ of the Gabor frame $\{\pi(\la) \tilde{g}_{j}\}_{\la\in\widetilde{\La},j\in\Z_{n}}$ satisfy the estimate
\begin{equation} \label{eq:1210a} B((h_{j}),\La)^{-1} \le A_{\textnormal{opt}} \le B_{\textnormal{opt}} = \Vert (\tilde{g}_{j}) \Vert_{\widetilde{\La}} \le B((g_{j}),\La).\end{equation}
\end{theorem}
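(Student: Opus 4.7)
The plan is to pass through the Wexler--Raz characterization provided by Lemma~\ref{le:wex-raz} and to transfer biorthogonality from $G$ to $H$ by combining Poisson summation with the intertwining relation \eqref{eq:pi-phi-R-intertwining}. Since $\La$ is co-compact, the adjoint $\Lac$ is discrete and carries the scaled counting measure of Remark~\ref{rem:orth-measure-for-cocompact}. The assumption that $\gmlhs{(g_j)}{(h_j)}{\La}$ is idempotent is, by Lemma~\ref{le:wex-raz}, equivalent to $\sum_j \grhs{g_j}{h_j}{\Lac}=I_{L^2(G)}$; expanding this in the generators $\{\pi(\lac)^{*}\}_{\lac\in\Lac}$ and using faithfulness of the integrated Schr\"odinger representation gives the Wexler--Raz biorthogonality
\[\sum_{j\in\Z_n}\langle h_j,\pi(\lac)^{*}g_j\rangle_{L^{2}(G)}=\s(\La)\,\delta_{\lac,0}\quad(\lac\in\Lac).\]
A second application of Lemma~\ref{le:wex-raz} on $H$ reduces the statement of the theorem to showing the analogous relation $\sum_{j}\langle\tilde h_j,\pi(\tilde\lac)^{*}\tilde g_j\rangle_{L^{2}(H)}=\s(\widetilde\La)\,\delta_{\tilde\lac,0}$ for $\tilde\lac\in\widetilde{\La}^{\circ}$.

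To produce this identity, I fix $\tilde\lac=(x,\omega)\in\widetilde{\La}^{\circ}\subseteq H\times\hhat$ and use \eqref{eq:pi-phi-R-intertwining} in the form $\pi(\tilde\lac)^{*}\restrict{H}g_{j}=\restrict{H}\pi(\Phi(\tilde\lac))^{*}g_{j}$, which gives $\langle\tilde h_j,\pi(\tilde\lac)^{*}\tilde g_j\rangle_{L^{2}(H)}=c^{2}\int_{H}h_{j}(t)\,\overline{\pi(\Phi(\tilde\lac))^{*}g_{j}(t)}\,dt$. The integrand lies in $\SO(G)$ by Lemma~\ref{le:SO-properties}(v), so the Poisson summation formula \eqref{eq:poisson-for-cocompact} is available (note that $H$ is co-compact by~(i), whence $H^{\perp}$ is discrete) and, after a short computation with the definition of $\pi(\Phi(\tilde\lac))^{*}$, yields
\[\int_{H}h_{j}(t)\,\overline{\pi(\Phi(\tilde\lac))^{*}g_{j}(t)}\,dt=\frac{\phi(\omega)(x)}{\s(H)}\sum_{\gamma\in H^{\perp}}\langle h_{j},\pi(-x,\gamma-\phi(\omega))g_{j}\rangle_{L^{2}(G)}.\]
Summing over $j$, the inner sum is $\sum_{j}\langle h_{j},\pi(\lambda')g_{j}\rangle_{L^{2}(G)}$ with $\lambda'=-\Phi(\tilde\lac)+(0,\gamma)$. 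Hypothesis~(i), $\La\subseteq H\times\ghat$, forces $\{0\}\times H^{\perp}\subseteq\Lac$ by direct evaluation of $\cocy_{s}$, and hypothesis~(ii) gives $\Phi(\widetilde{\La}^{\circ})\subseteq\Lac$, so $\lambda'\in\Lac$ and the Wexler--Raz relation on $G$ applies. The convention $0\in K_{H^{\perp}}$ (so $\phi(0)=0$) together with $K_{H^{\perp}}\cap H^{\perp}=\{0\}$ forces $\lambda'=0$ to happen only at $\tilde\lac=0$ and $\gamma=0$; the single surviving term contributes $\s(\La)$. Multiplying by $c^{2}/\s(H)$ produces $c^{2}\s(\La)/\s(H)$, which equals the required $\s(\widetilde{\La})$ precisely for $c=(\s(\widetilde{\La})\,\s(H)/\s(\La))^{1/2}$.

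For the frame bound estimate \eqref{eq:1210a}, I repeat the Poisson computation with $(h_{j})=(g_{j})$, take absolute values inside the $\gamma$-sum, and observe that the map $(\tilde\lac,\gamma)\mapsto-\Phi(\tilde\lac)+(0,\gamma)$ is injective into $\Lac$ (the second coordinate determines the coset of $\phi(\omega)$ in $K_{H^{\perp}}$, hence $\omega$, hence $\gamma$). Summing then gives
\[\sum_{j}\sum_{\tilde\lac\in\widetilde{\La}^{\circ}}\bigl|\langle\tilde g_{j},\pi(\tilde\lac)^{*}\tilde g_{j}\rangle\bigr|\le \frac{c^{2}\s(\La)}{\s(H)}\,B((g_{j}),\La)=\s(\widetilde{\La})\,B((g_{j}),\La),\]
so $B((\tilde g_{j}),\widetilde{\La})\le B((g_{j}),\La)$. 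By Lemma~\ref{le:so-implies-bessel} this bounds $B_{\textnormal{opt}}=\|(\tilde g_{j})\|_{\widetilde{\La}}$ from above by $B((g_{j}),\La)$. The same argument applied to $(\tilde h_{j})$ gives $B((\tilde h_{j}),\widetilde{\La})\le B((h_{j}),\La)$, and Lemma~\ref{le:frame-bound-from-dual-system} then yields $A_{\textnormal{opt}}\ge 1/B((h_{j}),\La)$.

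The main technical obstacle is the careful bookkeeping in the second paragraph: checking that the Poisson sum exactly picks out lattice points in $\Lac$ and that the only vanishing denominator $\lambda'=0$ corresponds to $\tilde\lac=0$. This step uses both geometric hypotheses on $\La$ and $\widetilde{\La}$ in an essential way and fixes the normalization constant $c$; once this is in hand, the rest is standard manipulation of $\SO$-norms and frame-theoretic duality.
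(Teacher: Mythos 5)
Your proposal is correct and follows essentially the same route as the paper: reduction via the Wexler--Raz characterization of Lemma~\ref{le:wex-raz}, the intertwining relation \eqref{eq:pi-phi-R-intertwining}, Poisson summation over $H^{\perp}$ (the paper isolates this as Lemma~\ref{lem:sampledframes-2}, you do it inline), the observation that $\{0\}\times H^{\perp}\subseteq\Lac$ and $\Phi(\widetilde{\La}^{\circ})\subseteq\Lac$ place all arising points in $\Lac$, and the same determination of $c$ from the single surviving term. The frame-bound argument, including the injectivity of $(\tilde{\lambda}^{\circ},\gamma)\mapsto\Phi(\tilde{\lambda}^{\circ})+(0,\gamma)$ and the appeal to Lemmas~\ref{le:so-implies-bessel} and \ref{le:frame-bound-from-dual-system}, also matches the paper's proof.
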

\begin{remark} The merit of the inequalities \eqref{eq:1210a} is that it shows that the condition number (the ratio between the optimal upper and lower frame bound) of \emph{any} new Gabor frame for $L^{2}(H)$ generated by $(\tilde{g}_{j})$ and $\tilde{\La}$ obtained via Theorem \ref{th:sampl-non-sep} is bounded by $B((g_{j}),\La) \cdot B((h_{j}),\La)$.
\end{remark}
\begin{corollary}[Oversampling] Consider the situation as in Theorem \ref{th:sampl-non-sep}. If $\tilde{\Lambda}$ is a subgroup of $G\times\ghat$ such that $\Lambda\subseteq \tilde{\Lambda}$, then the functions
$(\tilde{g}_{j})$ and $(\tilde{h}_{j})$ in $\SO(G)^{n}$ given by $\tilde{g}_{j} = c\, g_{j}$, $\tilde{h}_{j} = c\, h_{j}$, and where 
\[ c= \big( \textnormal{s}_{G\times\ghat}(\tilde{\Lambda}) / \textnormal{s}_{G\times\ghat}(\Lambda)\big)^{1/2},\] 
are such that $\gmlhs{(\tilde{g}_{j})}{(\tilde{h}_{j})}{\widetilde{\La}}$ is an idempotent element of $\textnormal{M}_{n}(\lmodule^{G})$, i.e., the Gabor systems $\{\pi(\la) \tilde{g}_{j}\}_{\la\in\widetilde{\La},j\in\Z_{n}}$ and $\{\pi(\la) \tilde{h}_{j}\}_{\la\in\widetilde{\La},j\in\Z_{n}}$ are dual frames for $L^{2}(G)$.
\end{corollary}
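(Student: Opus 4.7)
The plan is to use Lemma~\ref{le:wex-raz} to recast the idempotency hypothesis as an operator identity, extract the classical Wexler--Raz biorthogonality relations, and then verify that the rescaled, over-sampled system satisfies the same identity.

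First I would observe that since $(G\times\ghat)/\tilde{\La}$ is a quotient of the compact group $(G\times\ghat)/\La$, the subgroup $\tilde{\La}$ is again closed and co-compact. Consequently both $\Lac$ and $\widetilde{\La}^\circ$ are discrete, and the inclusion $\La\subseteq\tilde{\La}$ dualizes to $\widetilde{\La}^\circ\subseteq\Lac$. Thanks to Remark~\ref{rem:orth-measure-for-cocompact}, the relevant $\rmodule^G$-valued inner products reduce to absolutely convergent series,
\[ \grhs{g_j}{h_j}{\Lac} = \frac{1}{\s(\La)}\sum_{\lac\in\Lac}\langle h_j,\pi(\lac)^*g_j\rangle\,\pi(\lac)^*,\]
and analogously for $\grhs{\tilde g_j}{\tilde h_j}{\widetilde{\La}^\circ}$ over $\widetilde{\La}^\circ$.

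Next, Lemma~\ref{le:wex-raz} turns the hypothesis that $\gmlhs{(g_j)}{(h_j)}{\La}$ is an idempotent into the operator identity $\sum_{j\in\Z_n}\grhs{g_j}{h_j}{\Lac}=\textnormal{Id}_{L^2(G)}$. The uniqueness of the expansion of an element of $\rmodule^G$ in the time-frequency shifts $\{\pi(\lac)^*\}_{\lac\in\Lac}$, which is built into the Banach-algebra norm $\Vert\mathbf{b}\Vert_{\rmodule^G}=\Vert\mathsf{b}\Vert_{\SO}$, lets me read off the Wexler--Raz biorthogonality
\[ \sum_{j\in\Z_n}\langle h_j,\pi(\lac)^*g_j\rangle = \s(\La)\,\delta_{\lac,0}, \qquad \lac\in\Lac. \]

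Substituting $\tilde g_j=c\,g_j$, $\tilde h_j=c\,h_j$, and restricting the summation from $\Lac$ to $\widetilde{\La}^\circ$, only the term $\lac=0$ survives, which yields
\[ \sum_{j\in\Z_n}\grhs{\tilde g_j}{\tilde h_j}{\widetilde{\La}^\circ} = \frac{c^2}{\s(\tilde{\La})}\sum_{\lac\in\widetilde{\La}^\circ}\Big(\sum_{j\in\Z_n}\langle h_j,\pi(\lac)^*g_j\rangle\Big)\pi(\lac)^* = \frac{c^2\,\s(\La)}{\s(\tilde{\La})}\,\textnormal{Id}_{L^2(G)}. \]
The chosen normalization $c^2=\s_{G\times\ghat}(\tilde{\La})/\s_{G\times\ghat}(\La)$ makes the right-hand side exactly $\textnormal{Id}_{L^2(G)}$. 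Applying Lemma~\ref{le:wex-raz} in reverse then gives that $\gmlhs{(\tilde g_j)}{(\tilde h_j)}{\widetilde{\La}}$ is an idempotent element of $\textnormal{M}_n(\lmodule^G)$, equivalently, the two Gabor systems $\{\pi(\la)\tilde g_j\}_{\la\in\widetilde{\La},j\in\Z_n}$ and $\{\pi(\la)\tilde h_j\}_{\la\in\widetilde{\La},j\in\Z_n}$ are dual frames for $L^2(G)$. The only delicate step in the argument is the extraction of the biorthogonality relations, which rests on the linear independence of $\{\pi(\lac)^*\}_{\lac\in\Lac}$ inside $\rmodule^G$; this independence is implicit in the well-definedness of the Banach-algebra norm on $\rmodule^G$ recorded in the preliminaries.
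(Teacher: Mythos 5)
Your argument is correct, but it takes a different and more self-contained route than the paper. The paper disposes of this corollary in one line: apply Theorem \ref{th:sampl-non-sep} with $H=G$, in which case $H^{\perp}=\{0\}$, the embedding $\Phi$ is the identity on $G\times\ghat$, $\textnormal{s}_{G}(H)=1$, and the constant of the theorem reduces to the $c$ of the corollary. You instead unroll that special case: you pass through Lemma \ref{le:wex-raz} to the Wexler--Raz biorthogonality relations $\sum_{j}\langle h_{j},\pi(\lac)^{*}g_{j}\rangle=\s(\La)\,\delta_{\lac,0}$ on $\Lac$, observe that $\La\subseteq\tilde{\La}$ dualizes to $\widetilde{\La}^{\circ}\subseteq\Lac$, and verify the same relations over the smaller adjoint group after rescaling by $c$. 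This is exactly the computation hidden inside the proof of Theorem \ref{th:sampl-non-sep} once the sum over $H^{\perp}$ collapses to the single term $\gamma=0$, so both proofs rest on the same engine (Lemma \ref{le:wex-raz} together with the uniqueness of the coefficient function $\mathsf{b}$ of an element of $\rmodule$, which underlies the well-definedness of the norm on $\rmodule$); your version buys transparency and avoids the restriction machinery, while the paper's buys brevity. One small imprecision: closedness of $\tilde{\La}$ does \emph{not} follow from $\La\subseteq\tilde{\La}$ with $\La$ closed and co-compact (a dense subgroup of $G\times\ghat$ can contain a closed co-compact one); it must be assumed, as it implicitly is in the statement, since $\s_{G\times\ghat}(\tilde{\La})$ only makes sense for a closed subgroup. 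Granting closedness, co-compactness of $\tilde{\La}$ and discreteness of $\widetilde{\La}^{\circ}$ do follow as you say, and the rest of your argument goes through.
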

\begin{proof} Apply Theorem \ref{th:sampl-non-sep} with $H=G$ and $\tilde{\Lambda}$ such that $\Lambda\subset\tilde{\Lambda}$. In that case $\Phi$ is the identity operator on $G\times\ghat$. It is easy to verify that conditions (i) and (ii) in Theorem \ref{th:sampl-non-sep} are satisfied. The statement follows.
\end{proof}

In general, the assumptions in Theorem \ref{th:sampl-non-sep} do \emph{not} guarantee that the sampling procedure preserves canonical pairs of dual frames. 
The following lemma provides a sufficient condition for this.
\begin{proposition} \label{pr:canonical-preserved} If condition (ii) in Theorem \ref{th:sampl-non-sep} is strengthened to be
\begin{itemize}
\item[(ii*)] $\tilde{\Lambda}$ is a closed cocompact subgroup of $H\times\widehat{H}$ such that $\Phi(\tilde{\Lambda}^{\circ}) = \Lambda^{\circ} \cap (G\times K_{H^{\perp}})$,
\end{itemize}
then the process described in Theorem \ref{th:sampl-non-sep} preserves pairs of canonical dual frames. That is, if $(g_{j})$ is an $n$-tuple in $\SO(G)^{n}$ that generates a Gabor frame for $L^{2}(G)$ with respect to time-frequency shifts from $\La$ and $(h_{j}) = (S_{(g_{j}),\La}^{-1} g_{j})$, then the dual pair of Gabor frame generators $(\tilde{g}_{j})$ and $(\tilde{h}_{j})$ constructed by Theorem \ref{th:sampl-non-sep} are such that $(\tilde{h}_{j})=(S_{\tilde{g}_{j},\widetilde{\La}}^{-1} \tilde{g}_{j})$.
\end{proposition}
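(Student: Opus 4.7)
The plan is to invoke the uniqueness characterization of the canonical dual in Lemma~\ref{le:canonical-dual-form}: the canonical dual generators of the Gabor frame $\{\pi(\tilde{\lambda})\tilde{g}_{j}\}_{\tilde{\lambda}\in\widetilde{\La},\,j\in\Z_{n}}$ are the \emph{unique} dual pair lying inside $V := \overline{\textnormal{span}}\{\pi(\tilde{\lac})^{*}\tilde{g}_{k}\}_{\tilde{\lac}\in\widetilde{\La}^{\circ},\,k\in\Z_{n}}$. Since (ii*) trivially implies (ii), Theorem~\ref{th:sampl-non-sep} already guarantees that $(\tilde{g}_{j})$ and $(\tilde{h}_{j})$ form a dual pair of multi-window Gabor frames for $L^{2}(H)$, so the entire problem reduces to checking that each $\tilde{h}_{j}$ lies in $V$.

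To this end I first expand the original canonical dual inside the Banach algebra $\rmodule^{G}$. Since $\{\pi(\lambda)g_{j}\}_{\lambda\in\La,j\in\Z_n}$ is a frame, the frame operator $S:=\sum_{k}\grhs{g_{k}}{g_{k}}{\Lac}$ is a positive invertible element of $\rmodule^{G}$, and by the spectral invariance of these $\SO$-type twisted convolution algebras, $\mathbf{b}:=S^{-1}\in\rmodule^{G}$ with coefficient sequence $\mathsf{b}\in\SO(\Lac)=\ell^{1}(\Lac)$. Consequently
\[
    h_{j}\;=\;g_{j}\cdot\mathbf{b}\;=\;\frac{1}{\s(\La)}\sum_{\lac\in\Lac}\mathsf{b}(\lac)\,\pi(\lac)^{*}g_{j}.
\]

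The group-theoretic heart of the argument is that condition~(i) forces $\{0\}\times H^{\perp}\subseteq\Lac$ (a one-line check from the definition of $\cocy_{s}$, since $h^{\perp}$ annihilates $H$), while condition (ii*) upgrades this to the identification of $\Phi(\widetilde{\La}^{\circ})=\Lac\cap(G\times K_{H^{\perp}})$ as an \emph{exact} set of coset representatives for $\Lac/(\{0\}\times H^{\perp})$. Each $\lac\in\Lac$ therefore has a unique decomposition $\lac=\mu+\nu$ with $\mu=\Phi(\tilde{\lac})\in H\times K_{H^{\perp}}$ (in particular $\mu_{1}\in H$) and $\nu=(0,h^{\perp})\in\{0\}\times H^{\perp}$. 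A short computation from $\pi(\chi)^{*}f(t)=\overline{\omega(x)\,\omega(t)}\,f(t+x)$ together with $h^{\perp}|_{H}\equiv 1$ and $\mu_{1}\in H$ yields the pointwise identity $\restrict{H}\pi(\mu+\nu)^{*}g_{j}=\restrict{H}\pi(\mu)^{*}g_{j}$, and combining with the intertwining \eqref{eq:pi-phi-R-intertwining} (using $\phi(\tilde{\omega})|_{H}=\tilde{\omega}$ and $\pi(\chi)^{*}=\cocy(\chi,\chi)\pi(-\chi)$) gives $\restrict{H}\pi(\mu)^{*}g_{j}=\pi(\tilde{\lac})^{*}\restrict{H}g_{j}=c^{-1}\pi(\tilde{\lac})^{*}\tilde{g}_{j}$. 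Grouping the sum over $\Lac$ by these cosets I obtain
\[
    \tilde{h}_{j}\;=\;c\,\restrict{H}h_{j}\;=\;\frac{1}{\s(\La)}\sum_{\tilde{\lac}\in\widetilde{\La}^{\circ}}\tilde{\mathsf{b}}(\tilde{\lac})\,\pi(\tilde{\lac})^{*}\tilde{g}_{j},\qquad \tilde{\mathsf{b}}(\tilde{\lac}):=\sum_{h^{\perp}\in H^{\perp}}\mathsf{b}\big(\Phi(\tilde{\lac})+(0,h^{\perp})\big),
\]
with the inner $H^{\perp}$-summation absolutely convergent by $\mathsf{b}\in\ell^{1}(\Lac)$. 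This exhibits $\tilde{h}_{j}\in V$, and the uniqueness statement of Lemma~\ref{le:canonical-dual-form} finishes the proof.

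The main obstacle is the careful packaging of the equality in (ii*): under the weaker (ii), some $\lac\in\Lac$ would fall outside every coset $\Phi(\tilde{\lac})+\{0\}\times H^{\perp}$, its contribution to $\restrict{H}h_{j}$ would fail to be absorbed into a $\pi(\tilde{\lac})^{*}\tilde{g}_{j}$ term, and $\tilde{h}_{j}$ would pick up components outside $V$. By contrast, the cocycle bookkeeping reducing $\pi(\mu+\nu)^{*}$ to $\pi(\mu)^{*}$ after restriction is routine once the relations collected in Section~\ref{sec:prelim} are at hand, and the appeal to spectral invariance to place $\mathbf{b}$ in $\rmodule^{G}$ is standard for $\SO$-type algebras.
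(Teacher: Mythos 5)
Your proof is correct and follows essentially the same route as the paper: expand the canonical dual $h_{j}$ as an absolutely summable combination of time-frequency shifts of the windows over $\Lambda^{\circ}$, use (i) and (ii*) to decompose each $\lambda^{\circ}\in\Lambda^{\circ}$ uniquely as $\Phi(\tilde{\lambda}^{\circ})+(0,\gamma)$, note that restriction to $H$ annihilates the $H^{\perp}$-modulation, and conclude via the uniqueness statement of Lemma~\ref{le:canonical-dual-form}. The only (cosmetic) difference is that you obtain the $\ell^{1}$-expansion from $h_{j}=g_{j}\cdot S^{-1}$ via spectral invariance of $\rmodule^{G}$, whereas the paper reads it off directly from Lemma~\ref{le:canonical-dual-form}; both rest on the same underlying Wiener-type result.
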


Let us give the proof of Theorem \ref{th:sampl-non-sep} and Proposition \ref{pr:canonical-preserved}. The proof of Theorem \ref{th:sampl-non-sep} builds on the ideas of S\o ndergaard \cite{so07}. The following lemma is essential.

\begin{lemma} \label{lem:sampledframes-2}
For any closed cocompact subgroup $H$ of $G$ and any two functions $f_{1}, f_{2} \in\SO(G)$
\[ 
\big\langle \restrict{H}f_{1}, \restrict{H}f_{2} \big\rangle_{L^2 (H)} = \frac{1}{\textnormal{s}_{G}(H)} \, \sum_{\gamma\in H^{\perp}} \big\langle f_{1}, E_{\gamma} f_{2} \big\rangle_{L^2 (G)}.\]
\end{lemma}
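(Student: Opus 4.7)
The plan is to recognize that the inner product $\langle \restrict{H}f_1, \restrict{H}f_2\rangle_{L^2(H)}$ is by definition the $\mu_H$-integral over $H$ of the pointwise product $F := f_1 \cdot \overline{f_2}$, and then to apply the Poisson summation formula in its cocompact form (Lemma~\ref{le:SO-properties}(ix), equation~\eqref{eq:poisson-for-cocompact}).

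The key preliminary step is to verify that $F\in \SO(G)$ so that the Poisson formula is applicable. By Lemma~\ref{le:SO-properties}(v), $\SO(G)$ is a Banach algebra under pointwise multiplication, so it suffices to know that $\overline{f_2}\in \SO(G)$. This is immediate from the definition of $\SO$, since the map $f\mapsto \overline{f}$ leaves $|\mathcal{V}_{f}f|$ invariant up to a reflection of the time-frequency plane: a direct calculation shows $|\mathcal{V}_{\overline{f}}\overline{f}(x,\omega)|=|\mathcal{V}_{f}f(-x,-\omega)|$, so $\overline{f}\in \SO(G)$ iff $f\in \SO(G)$. Thus $F\in\SO(G)$.

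Next I would compute the Fourier transform of $F$ at an arbitrary character $\gamma\in\ghat$:
\[
\hat{F}(\gamma) = \int_{G} f_1(t)\,\overline{f_2(t)}\,\overline{\gamma(t)}\, dt
= \int_G f_1(t)\,\overline{\gamma(t)f_2(t)}\, dt
= \langle f_1, E_\gamma f_2\rangle_{L^2(G)},
\]
using the definition of the modulation operator $E_\gamma$.

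Finally, applying Lemma~\ref{le:SO-properties}(ix) in the cocompact form~\eqref{eq:poisson-for-cocompact} to $F\in\SO(G)$ gives
\[
\int_H F(t)\, d\mu_H(t) = \frac{1}{\textnormal{s}_G(H)}\sum_{\gamma\in H^\perp} \hat{F}(\gamma),
\]
and combining this with the previous identification of $\hat{F}(\gamma)$ yields exactly the claimed equality. No step looks delicate; the only substantive point beyond bookkeeping is the closure of $\SO(G)$ under complex conjugation, and even that is essentially immediate from the symmetry of $|\mathcal{V}_f f|$ under $(x,\omega)\mapsto(-x,-\omega)$.
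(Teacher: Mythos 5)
Your proof is correct and follows essentially the same route as the paper: write the left-hand side as $\int_H (f_1\cdot\overline{f_2})\,d\mu_H$, note $f_1\cdot\overline{f_2}\in\SO(G)$, identify its Fourier transform at $\gamma$ with $\langle f_1,E_\gamma f_2\rangle$, and apply the cocompact Poisson formula \eqref{eq:poisson-for-cocompact}. The only difference is that you spell out the closure of $\SO(G)$ under complex conjugation, a point the paper leaves implicit.
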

\begin{proof} 
It is clear that $\big\langle \restrict{H}f_{1}, \restrict{H}f_{2} \big\rangle_{L^2(H)} = \int_H \big(f_{1} \cdot \overline{f_{2}}\big)(t) \, d\mu_H (t)$.
Since $f_{1} \cdot \overline{f_{2}}$ is a function in $\SO(G)$ we may apply Poisson's formula as in \eqref{eq:poisson-for-cocompact}. This yields the desired equality
\begin{align*}
\big\langle \restrict{H}f_{1}, \restrict{H}f_{2} \big\rangle_{L^2(H)} &= \frac{1}{\textnormal{s}_{G}(H)} \, \sum_{\gamma\in H^{\perp}} \mathcal{F}( f_{1} \cdot \overline{f_{2}})(\gamma)  \\ 
&= \frac{1}{\textnormal{s}_{G}(H)} \, \sum_{\gamma\in H^{\perp}} \int_G \big(f_{1} \cdot \overline{f_{2}}\big)(t) \, \overline{\gamma(t)} \, d\mu_G (t)  \\
&= \frac{1}{\textnormal{s}_{G}(H)} \, \sum_{\gamma\in H^{\perp}} \int_G f_{1}(t) \cdot \overline{\big( E_{\gamma} f_{2}\big)(t)} \,  d\mu_G (t)  \\
&= \frac{1}{\textnormal{s}_{G}(H)} \, \sum_{\gamma\in H^{\perp}} \big\langle f_{1} , E_{\gamma} \, f_{2}\big\rangle_{L^{2}(G)}.\end{align*}
\end{proof}

\begin{proof}[Proof of Theorem \ref{th:sampl-non-sep}]
In order for the functions $(\tilde{g}_{j})$ and $(\tilde{h}_{j})$ in $\SO(H)^{n}$ to form a pair of dual multi-window Gabor frames for $L^{2}(H)$ with respect to time-frequency shifts of $\tilde{\Lambda}$, i.e., $\gmlhs{(\tilde{g}_{j})}{(\tilde{h}_{j})}{\widetilde{\La}}$ is an idempotent operator in $\textnormal{M}_{n}(\lmodule^{H})$, it is, by Lemma \ref{le:wex-raz}, necessary and sufficient that $\gmrhs{(\tilde{g}_{j})}{(\tilde{h}_{j})}{\widetilde{\La}^{\circ}}$ is the identity operator on $L^{2}(H)^{n}$. That is, 
\begin{equation} \label{eq:1911a} \sum_{j\in\Z_{n}} \big\langle \tilde{h}_{j}, \pi(\lambda^{\circ})^{*} \tilde{g}_{j}\big\rangle_{L^{2}(H)} = s_{H\times\hhat}(\tilde{\Lambda}) \, \delta_{{\lambda}^{\circ},0} \ \ \text{for all} \ \ {\lambda}^{\circ}\in \tilde{\Lambda}^{\circ}.\end{equation}
We use that $\tilde{g}_{j}= c \, \restrict{H}g_{j}$, $\tilde{h}_{j} = c \, \restrict{H} h_{j}$  to rewrite the left hand side of \eqref{eq:1911a},
\begin{align*} & \sum_{j\in\Z_{n}} \big\langle \tilde{h}_{j}, \pi(\lambda^{\circ})^{*} \tilde{g}_{j}\big\rangle_{L^{2}(H)} = c^{2} \sum_{j\in\Z_{n}} \big\langle \restrict{H}h_{j}, \pi(\lambda^{\circ})^{*} \restrict{H}g_{j}\big\rangle_{L^{2}(H)} \\
& \stackrel{\eqref{eq:pi-phi-R-intertwining}}{=} c^{2} \sum_{j\in\Z_{n}} \big\langle \restrict{H}h_{j}, \restrict{H} \pi(\Phi(\lambda^{\circ}))^{*} g_{j}\big\rangle_{L^{2}(H)} \\
& = \frac{c^2}{s_{G}(H)} \, \sum_{\gamma\in H^{\perp}} \sum_{j\in\Z_{n}}  \big\langle h_{j}, \pi(0, \gamma)^{*} \pi(\Phi(\lambda^{\circ}))^{*} g_{j} \big\rangle_{L^2 (G)}.\end{align*}
In the last step  Lemma \ref{lem:sampledframes-2} is used. Hence \eqref{eq:1911a} becomes
\begin{equation} \label{eq:1911b} 
\frac{c^2}{s_{G}(H)} \, \cocy\big(\Phi(\lambda^{\circ}), (0,\gamma)\big) \, \sum_{\gamma\in H^{\perp}} \sum_{j\in\Z_{n}} \big\langle h_{j},  \pi\big((0, \gamma) + \Phi(\lambda^{\circ})\big)^{*} g_{j} \big\rangle_{L^2 (G)} = s_{H\times\hhat}(\tilde{\Lambda}) \, \delta_{\lambda^{\circ},0} \ \ \text{for all} \ \ \lambda^{\circ}\in \tilde{\Lambda}^{\circ}.
\end{equation}
We now observe the following. 

(a). Assumption (i) is equivalent to the fact that $\{0\}\times H^{\perp} \subseteq \Lambda^{\circ}$. Furthermore by (ii) we have $\Phi(\tilde{\Lambda}^{\circ}) \subseteq \Lambda^{\circ}$. The fact that $\Lambda^{\circ}$ is a group implies that the time-frequency shifts that appear in the inner product on the left side in \eqref{eq:1911b}, $\pi\big((0, \gamma) + \Phi(\lambda^{\circ})\big)^{*}$, are of the form $\pi(\lambda^{\circ})$ for some $\lambda^{\circ}\in \Lambda^{\circ}$.

(b). By assumption, the functions $(g_{j})$ and $(h_{j})$ generate dual Gabor frames for $L^{2}(G)$ with respect to time-frequency shifts in $\Lambda$ and hence they satisfy, by Lemma \ref{le:wex-raz}, 
\[ \sum_{j=1}^{N}\big\langle h_{j},\pi(\lambda^{\circ})^{*} g_{j}\big\rangle_{L^{2}(G)} = s_{G\times\ghat}(\Lambda) \, \delta_{\lambda^{\circ},0} \ \ \text{for all} \ \lambda^{\circ}\in\Lambda^{\circ}.\]
A combination of observation (a) and (b) establishes the following relation for the inner products appearing on the left side of \eqref{eq:1911b}:
\[ \sum_{j\in\Z_{n}} \big\langle h_{j},  \pi\big((0, \gamma) + \Phi(\lambda^{\circ})\big)^{*} g_{j} \big\rangle_{L^2 (G)} = \textnormal{s}_{G\times\ghat}(\Lambda) \, \delta_{[(0,\gamma)+\Phi({\lambda}^{\circ})],0} \ \ \text{for all} \ \ \gamma\in H^{\perp} \text{ and } {\lambda}^{\circ}\in \tilde{\Lambda}^{\circ}. \]
Using this in \eqref{eq:1911b} yields the equality
\begin{equation} \label{eq:1911c}
\frac{c^2 \, \textnormal{s}_{G\times\ghat}(\Lambda)}{s_{G}(H)} \, \cocy\big(\Phi(\lambda^{\circ}), (0,\gamma)\big) \, \sum_{\gamma\in H^{\perp}}  \delta_{[(0,\gamma)+\Phi({\lambda}^{\circ})],0}  = \textnormal{s}_{H\times\hhat}(\tilde{\Lambda}) \, \delta_{{\lambda}^{\circ},0} \ \ \text{for all} \ \ {\lambda}^{\circ}\in \tilde{\Lambda}^{\circ}.
\end{equation}
Recall that $\Phi$ maps $H\times\hhat$ into $G\times K_{H^{\perp}}$, where $K_{H^{\perp}}$ is a set of coset representatives of $H^{\perp}$ and also $\Phi$ is constructed such that $\Phi(0)=0$. Because of this, $(0,\gamma) + \Phi({\lambda}^{\circ})$, $\gamma\in H^{\perp}$, ${\lambda}^{\circ}\in\tilde{\Lambda}^{\circ}$ is equal to zero if and only if both $\gamma = 0$ and ${\lambda}^{\circ}=0$. That is, equation \eqref{eq:1911c} is satisfied whenever $(\gamma,\lambda^{\circ}) \ne (0,0)$. The only case left to verify is thus $(\gamma,\lambda^{\circ}) = (0,0)$, in which case the required equality becomes
\[ \frac{c^{2} \, s_{G\times\ghat}(\Lambda)}{s_{G}(H)} = s_{H\times\hhat}(\tilde{\Lambda}) \ \ \Leftrightarrow \ \ c^2 = \frac{s_{H\times\hhat}(\tilde{\Lambda}) \, s_{G}(H)}{s_{G\times\ghat}(\Lambda)}. \]
This is exactly the way we chose the constant $c$. This shows that the desired equality \eqref{eq:1911a} is satisfied and the first statement of the theorem follows. Concerning the moreover part, we observe that by Lemma \ref{le:so-implies-bessel} the optimal Bessel bound $B_{\textnormal{opt}}$ for the Gabor system $\{\pi(\lambda) \tilde{g}_{j}\}_{\lambda\in\tilde{\Lambda},j \in\Z_{n}}$ in $L^{2}(H)$ can be estimated by
\[ B_{\textnormal{opt}} \le B((\tilde{g}_{j}),\tilde{\La}) = \frac{1}{\textnormal{s}_{H\times\hhat}(\tilde{\Lambda})} \sum_{j\in \Z_{n}}\sum_{{\lambda}^{\circ}\in\tilde{\Lambda}^{\circ}} \big\vert \big\langle \tilde{g}_{j} , \pi({\lambda}^{\circ})^{*} \tilde{g}_{j} \big\rangle_{L^{2}(H)} \big\vert.\]
As previously in the proof, an application of Lemma \ref{lem:sampledframes-2} lets us turn the inner product from one on $L^{2}(H)$ into one on $L^{2}(G)$,
\begin{align*}
B_{\textnormal{opt}} & = \frac{1}{\textnormal{s}_{H\times\hhat}(\tilde{\Lambda})} \sum_{j\in\Z_{n}}\sum_{{\lambda}^{\circ}\in\tilde{\Lambda}^{\circ}} \Big\vert \frac{c^{2}}{\textnormal{s}_{G}(H)} \sum_{\gamma\in H^{\perp}} \big\langle g_{j} , \pi\big((0,\gamma)+\Phi({\lambda}^{\circ})\big)^{*} g_{j}\big\rangle_{L^{2}(G)} \Big\vert \\
& \le \frac{c^{2}}{\textnormal{s}_{H\times\hhat}(\tilde{\Lambda}) \, \textnormal{s}_{G}(H)} \sum_{j\in\Z_{n}} \sum_{{\lambda}^{\circ}\in\tilde{\Lambda}^{\circ}} \sum_{\gamma\in H^{\perp}} \big\vert \big\langle g_{j} , \pi\big((0,\gamma)+\Phi({\lambda}^{\circ})\big)^{*} g_{j} \big\rangle_{L^{2}(G)} \big\vert .
\end{align*}
By construction of $c$, we have $c^{2} ({\textnormal{s}_{H\times\hhat}(\tilde{\Lambda}) \, \textnormal{s}_{G}(H)})^{-1} = \textnormal{s}_{G\times\ghat}(\Lambda)^{-1}$. Moreover, by construction of $\Phi$ and the assumption (i) and (ii) the collection of points $\{ (0,\gamma) + \Phi({\lambda}^{\circ})\}_{\gamma\in H^{\perp}, {\lambda}^{\circ}\in\tilde{\Lambda}^{\circ}}$ is a subset of $\Lambda^{\circ}$ (as also argued earlier in the proof). Thus
\[ B_{\textnormal{opt}} \le \frac{1}{\textnormal{s}_{G\times\ghat}(\Lambda)} \sum_{j\in\Z_{n}} \sum_{\lambda^{\circ}\in\Lambda^{\circ}} \big\vert  \big\langle g_{j}, \pi(\lambda^{\circ})^{*} g_{j}\big\rangle_{L^{2}(G)}\big\vert = B((g_{j}),\La). \]
The lower bound follows by the upper frame bound for the dual system, see Lemma \ref{le:frame-bound-from-dual-system}.
\end{proof}

\begin{proof}[Proof of Proposition \ref{pr:canonical-preserved}] Let $(g_{j})$ be functions in $\SO(G)^{n}$ that generate a Gabor frame $\{\pi(\lambda) g_{j} \}_{\lambda\in\Lambda,j\in\Z_{n}}$ for $L^{2}(G)$, which we turn into a Gabor frame $\{\pi({\lambda}) \tilde{g}_{j}\}_{{\lambda}\in\tilde{\Lambda},j\in\Z_{n}}$ for $L^{2}(H)$ as in Theorem \ref{th:sampl-non-sep}. Let $(h_{j})$ be the canonical dual frame generators. By Lemma \ref{le:canonical-dual-form} we know that they are of the form
\[ h_{j} = \sum_{k\in\Z_{n}}\sum_{\lambda^{\circ}\in \Lambda^{\circ}} d_{j}(\lambda^{\circ},k) \, \pi(\lambda^{\circ}) g_{k}, \ \ j\in\Z_{n}\]
for a certain $d_{j}\in \ell^{1}(\Lambda^{\circ}\times \Z_{n})$. 
Using Theorem \ref{th:sampl-non-sep} these dual generators are turned into dual generators of the Gabor system $\{\pi({\lambda}) \tilde{g}_{j}\}_{{\lambda}\in\tilde{\Lambda},j\in\Z_{n}}$ in $\SO(H)$ by restricting them to the subgroup $H$ and multiplying them with the constant $c$,
\begin{align*} \tilde{h}_{j}=c\,\restrict{H}h_{j} & = c \sum_{k\in\Z_{n}} \sum_{\lambda^{\circ}\in \Lambda^{\circ}} d_{j}(\lambda^{\circ},k) \, \restrict{H} \, \pi(\lambda^{\circ}) g_{k}, \ \ j\in\Z_{n}.\end{align*}
By assumption we have $\Phi(\tilde\Lambda^{\circ}) = \Lambda^{\circ} \cap (G\times K_{H^{\perp}})$. Furthermore, by assumption (i) in Theorem \ref{th:sampl-non-sep} the inclusion $\{0\}\times H^{\perp} \subseteq \Lambda^{\circ}$ holds. This implies that every $\lambda^{\circ}\in \Lambda^{\circ}$ can be written in a unique form $\Phi({\lambda}^{\circ})+(0,\gamma)$ for some ${\lambda}^{\circ}\in \tilde{\Lambda}^{\circ}$ and $\gamma\in H^{\perp}$.  Thus
\begin{equation} \label{eq:2103a} \tilde{h}_{j}=c\,\restrict{H}h_{j} = c\sum_{k\in\Z_{n}} \sum_{{\lambda}^{\circ}\in \tilde{\Lambda}^{\circ}} \sum_{\gamma\in H^{\perp}} d_{j}(\Phi({\lambda}^{\circ})+(0,\gamma),k) \, \restrict{H} \, \pi(\Phi({\lambda}^{\circ})+(0,\gamma)) g_{k}.\end{equation}
Observe that 
\[ \restrict{H} \, \pi(\Phi({\lambda}^{\circ})+(0,\gamma)) g = \restrict{H} \, E_{\gamma} \pi(\Phi({\lambda}^{\circ})) g  = \restrict{H} \pi(\Phi({\lambda}^{\circ})) g.\]
In the last step we used that $\restrict{H} E_{\gamma} f= \restrict{H}f$ for all $f\in \SO(G)$ (since, $E_{\gamma}f(t) = \gamma(t) f(t)= f(t)$ for all $t\in H$ and $\gamma\in H^{\perp}$). Furthermore, 
\[ \restrict{H} \, \pi(\Phi({\lambda}^{\circ})+(0,\gamma)) g = \restrict{H} \pi(\Phi({\lambda}^{\circ})) g \stackrel{\eqref{eq:pi-phi-R-intertwining}}{=} \pi({\lambda}^{\circ}) \restrict{H} g.\]
We now continue in \eqref{eq:2103a} and find that the sampled dual generators are of the form
\[ \tilde{h}_{j}=c\,\restrict{H} h_{j} = \sum_{k\in\Z_{n}}\sum_{{\lambda}^{\circ}\in \tilde{\Lambda}^{\circ}} \Big( \sum_{\gamma\in H^{\perp}} d_{j}(\Phi({\lambda}^{\circ})+(0,\gamma),k) \Big) \, \pi({\lambda}^{\circ}) \, \underbrace{c\,  \restrict{H} g_{k}}_{= \tilde{g}_{j}}.\]
This shows that each $\tilde{h}_{j}$, $j\in\Z_{n}$ lies in $\overline{\textnormal{span}}\{\pi({\lambda}^{\circ}) \tilde{g}_{j} \, : \, \lambda^{\circ}\in \tilde{\Lambda}^{\circ} , j\in\Z_{n}\}$. By Lemma \ref{le:canonical-dual-form} it is the canonical dual generator of $(\tilde{g}_{j})$.
\end{proof}

\section{Periodization of Gabor frames}
\label{sec:periodization}

We will now consider the situation where we periodize the dual generators of a multi-window Gabor frame for $L^{2}(G)$ with respect to a discrete subgroup $H$. Under the right assumptions this will give us a multi-window Gabor frame for $L^{2}(G/H)$. We will deduce the result from Theorem \ref{th:sampl-non-sep} together with the following two facts:
\begin{enumerate} 
\item[(i)] For all $f\in\SO(G)$
\[  \period{H} f = \mathcal{F}_{G/H}^{-1} \restrict{H^{\perp}} \mathcal{F}_{G} f.\]
Here $\mathcal{F}_{G/H}$ is the Fourier transform from $\SO(G/H)$ onto $\SO(H^{\perp})$ and $\mathcal{F}_{G}$ is the Fourier transform from $\SO(G)$ onto $\SO(\ghat)$. 
\item[(ii)] The Fourier transform (as any unitary operator) preserves the frame properties of any collection of functions.
\end{enumerate}
Because of these two facts, the $n$-tuples $(\period{H}g_{j})$ and $(\period{H} h_{j})$ in $\SO(G/H)^{n}$ generate dual Gabor frames for $L^{2}(G/H)$ if and only if the $n$-tuples $( \restrict{H^{\perp}} \mathcal{F}_{G} g_{j})$ and $(\restrict{H^{\perp}} \mathcal{F}_{G} h_{j})$ in $\SO(H^{\perp})^{n}$ generate dual Gabor frames for $L^{2}(H^{\perp})$.  This allows us to transfer the statements about sampling of Gabor frames from the previous section into the setting of periodized Gabor frames. 

Mimicking the situation from before, we require an injective function from the time-frequency plane of $G/H$, $(G/H)\times H^{\perp}$ into $G\times\ghat$. 

\begin{remark}\label{rem:1902} Let $H$ be a discrete subgroup of $G$. 
Let $K_{H}$ be a set of coset representatives of the quotient $G/H$ in $G$ such that $0\in K_{H}$. Every coset in $G/H$ has a unique representation as $k + H$, where $k\in K_{H}$. this defines a bijection $\psi$ between $G/H$ and $K_{H}$
\[ \psi : G/H \to K_{H}\subseteq G, \ \psi(k+H) = k.\] 
With this identification between $G/H$ and $K_{H}$ we construct the injective operator 
\[ \Psi : G/H \times H^{\perp} \to G \times\ghat, \ \Psi( k+H, \gamma) = \big( \psi(k+H), \gamma \big) = (k,\gamma), \ k+H\in G/H, \ \gamma\in H^{\perp}.\]
Observe that $\Psi(0) = 0$. Furthermore, for any $\chi = (x,\omega)\in G/H\times H^{\perp}$ and any $f\in \SO(G)$
\begin{equation} \label{eq:pi-psi-P-intertwining} \pi(\chi) \period{H} f = \period{H} \pi(\Psi(\chi)) f.\end{equation}
\end{remark}

\begin{theorem}\label{th:period-non-sep} Let $\Lambda$ be a closed co-compact subgroup of $G\times\ghat$, and let $(g_{j})$ and $(h_{j})$ be $n$-tuple in $\SO(G)^{n}$. Assume that $\gmlhs{(g_{j})}{(h_{j})}{\La}$ is an idempotent element of $\textnormal{M}_{n}(\lmodule^{G})$, i.e., $\{\pi(\la) g_{j}\}_{\la\in\La,j\in\Z_{n}}$ and $\{\pi(\la) h_{j}\}_{\la\in\La,j\in\Z_{n}}$ are dual Gabor frames for $L^{2}(G)$.
If the following two assertions are satisfied,
\begin{itemize}
\item[(i)] $H$ is a discrete subgroup of $G$ such that $\Lambda \subseteq G\times H^{\perp}$ (or, equiv.\ $H\times\{0\} \subseteq \Lambda^{\circ}$)
\item[(ii)] $\tilde{\Lambda}$ is a closed cocompact subgroup of $G/H\times H^{\perp}$ such that $\Psi(\tilde{\Lambda}^{\circ})\subseteq \Lambda^{\circ}$,
\end{itemize}
then the $n$-tuple
$(\tilde{g}_{j})$ and $(\tilde{h}_{j})$ in $\SO(G/H)^{n}$ given by $\tilde{g}_{j} = c\, \period{H}g_{j}$, $\tilde{h}_{j} = c\, \period{H}h_{j}$, and where 
\[ c= \big( \textnormal{s}_{G/H\times H^{\perp}}(\tilde{\Lambda}) \, \textnormal{s}_{\ghat}(H^{\perp}) / \textnormal{s}_{G\times\ghat}(\Lambda) \, \big)^{1/2},\]
are such that $\gmlhs{(\tilde{g}_{j})}{(\tilde{h}_{j})}{\widetilde{\La}}$ is an idempotent element of $\textnormal{M}_{n}(\lmodule^{G/H})$, i.e., the two Gabor systems $\{\pi(\la) \tilde{g}_{j}\}_{\la\in\widetilde{\La},j\in\Z_{n}}$ and $\{\pi(\la) \tilde{h}_{j}\}_{\la\in\widetilde{\La},j\in\Z_{n}}$ are dual frames for $L^{2}(G/H)$.
Moreover, the optimal frame bounds $A_{\textnormal{opt}}$ and $B_{\textnormal{opt}}$ of the Gabor frame $\{\pi(\la) \tilde{g}_{j}\}_{\la\in\widetilde{\La},j\in\Z_{n}}$ satisfy the estimate
\begin{equation} \label{eq:1210aa} B((h_{j}),\La)^{-1} \le A_{\textnormal{opt}} \le B_{\textnormal{opt}} = \Vert (\tilde{g}_{j}) \Vert_{\widetilde{\La}} \le B((g_{j}),\La).\end{equation}
\end{theorem}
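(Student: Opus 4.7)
The plan is to reduce the statement to Theorem \ref{th:sampl-non-sep} by conjugating with the Fourier transform, exploiting the two facts stated at the beginning of this section: $\period{H} = \mathcal{F}_{G/H}^{-1}\,\restrict{H^{\perp}}\,\mathcal{F}_{G}$ on $\SO(G)$, and unitary conjugation preserves Gabor frame and dual-pair properties. First, I transfer the hypothesis from $G$ to $\ghat$. Setting $\hat g_j := \mathcal{F}_G g_j$, $\hat h_j := \mathcal{F}_G h_j$ and introducing the symplectic isomorphism $J: G\times\ghat\to\ghat\times\widehat{\ghat}$, $J(x,\omega) = (\omega,-x)$ (with $\widehat{\ghat}\cong G$), the standard intertwining $\mathcal{F}_G\,\pi(\chi) = \alpha(\chi)\,\pi(J(\chi))\,\mathcal{F}_G$ with $\alpha(\chi)\in\mathbb{T}$ ensures that $(\hat g_j),(\hat h_j)\in\SO(\ghat)^n$ form a pair of dual Gabor frame generators for $L^2(\ghat)$ with respect to the closed subgroup $J(\Lambda)\subseteq\ghat\times\widehat{\ghat}$; the phases $\alpha(\chi)$ cancel between $\langle f,\pi(\lambda) g_j\rangle$ and $\pi(\lambda) h_j$ in the dual-frame identity. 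Moreover $J(\Lambda^\circ) = J(\Lambda)^\circ$ and $J$ preserves covolumes.

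Next, I verify that the hypotheses of Theorem \ref{th:sampl-non-sep} hold on the Fourier side with the ambient group $\ghat$ playing the role of $G$. Since $H\subseteq G$ is discrete, $H^\perp\subseteq \ghat$ is closed and cocompact; condition (i) of the present theorem, $\Lambda\subseteq G\times H^\perp$, becomes $J(\Lambda)\subseteq H^\perp\times\widehat{\ghat}$, which is exactly condition (i) of Theorem \ref{th:sampl-non-sep}. Using the duality $\widehat{H^\perp}\cong G/H$, the given $\tilde\Lambda\subseteq G/H\times H^\perp$ corresponds to a closed cocompact subgroup $J(\tilde\Lambda)\subseteq H^\perp\times\widehat{H^\perp}$. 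Matching the coset-representative constructions of Remark \ref{rem:1901} and Remark \ref{rem:1902} under Fourier duality, the injection $\Phi$ of the sampling picture corresponds to the injection $\Psi$ of the periodization picture, so that the assumption $\Psi(\tilde\Lambda^\circ)\subseteq\Lambda^\circ$ translates into $\Phi(J(\tilde\Lambda)^\circ)\subseteq J(\Lambda)^\circ$, i.e., condition (ii) of Theorem \ref{th:sampl-non-sep}.

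Applying Theorem \ref{th:sampl-non-sep} on $L^2(\ghat)$ with the subgroup $H^\perp$ of $\ghat$ then yields that $(c\,\restrict{H^\perp}\hat g_j)$ and $(c\,\restrict{H^\perp}\hat h_j)$ are dual Gabor frame generators on $L^2(H^\perp)$ with respect to $J(\tilde\Lambda)$, where
\[ c^2 = \frac{\textnormal{s}_{H^\perp\times\widehat{H^\perp}}(J(\tilde\Lambda))\,\textnormal{s}_{\ghat}(H^\perp)}{\textnormal{s}_{\ghat\times\widehat{\ghat}}(J(\Lambda))} = \frac{\textnormal{s}_{G/H\times H^\perp}(\tilde\Lambda)\,\textnormal{s}_{\ghat}(H^\perp)}{\textnormal{s}_{G\times\ghat}(\Lambda)}, \]
which is exactly the constant in the statement, together with the frame-bound estimate \eqref{eq:1210a}. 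Because the quantity $B((\cdot),\cdot)$ depends only on absolute values of inner products of the form $|\langle g_j,\pi(\lac)^*g_j\rangle|$, which are preserved by the unitary $\mathcal{F}_G$, one has $B((\hat g_j),J(\Lambda)) = B((g_j),\Lambda)$ and similarly for $(h_j)$, so \eqref{eq:1210aa} follows directly from \eqref{eq:1210a}. To conclude, apply $\mathcal{F}_{G/H}^{-1}$ to both sides: by fact (i) this sends $(c\,\restrict{H^\perp}\hat g_j)$ to $(c\,\period{H} g_j) = (\tilde g_j)$ and $(c\,\restrict{H^\perp}\hat h_j)$ to $(\tilde h_j)$, and the dual-frame identity on $L^2(H^\perp)$ transfers to the desired dual-frame identity on $L^2(G/H)$ after undoing the symplectic swap $J$ (the unimodular phases cancel again).

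The main obstacle is the careful Fourier bookkeeping in the first two steps: verifying that $\mathcal{F}_G$ induces an isomorphism of the relevant Banach $*$-algebras (so that the idempotent in $\textnormal{M}_n(\lmodule^{G})$ built from $\Lambda$ transforms to the analogous idempotent built from $J(\Lambda)$ on $\ghat$), and ensuring that the injections $\Phi$ and $\Psi$ of Remarks \ref{rem:1901} and \ref{rem:1902} correspond correctly under the duality $\widehat{H^\perp}\cong G/H$ for compatible choices of coset representatives. Once this correspondence is nailed down, the covolume and constant bookkeeping is automatic and Theorem \ref{th:period-non-sep} becomes a direct translation of Theorem \ref{th:sampl-non-sep}.
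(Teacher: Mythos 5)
Your proposal is correct and follows essentially the same route as the paper: the authors likewise deduce Theorem \ref{th:period-non-sep} from Theorem \ref{th:sampl-non-sep} via the identity $\period{H} = \mathcal{F}_{G/H}^{-1}\restrict{H^{\perp}}\mathcal{F}_{G}$ and the fact that the Fourier transform preserves dual-frame properties, leaving the symplectic-swap and covolume bookkeeping implicit where you spell it out. (The paper also notes the alternative of mimicking the proof of Theorem \ref{th:sampl-non-sep} directly with Lemma \ref{lem:sampledframes-2} replaced by its periodization analogue, but that is offered only as a second option.)
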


As with the sampling theorem, also the assumptions in Theorem \ref{th:period-non-sep} do \emph{not} guarantee that the periodizing procedure preserves canonical pairs of dual frames. The following lemma yields a sufficient condition for this.

\begin{lemma} \label{le:canonical-preserved} If condition (ii) in Theorem \ref{th:period-non-sep} is strengthened to be
\begin{itemize}
\item[(ii*)] $\tilde{\Lambda}$ is a closed cocompact subgroup of $H\times\widehat{H}$ such that $\Psi(\tilde{\Lambda}^{\circ}) = \Lambda^{\circ} \cap (K_{H} \times \ghat)$,
\end{itemize}
then the process described in Theorem \ref{th:period-non-sep} preserves pairs of canonical dual frames. 
\end{lemma}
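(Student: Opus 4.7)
The plan is to mimic the proof of Proposition \ref{pr:canonical-preserved} in the periodized setting, replacing the restriction $\mathcal{R}_H$ with the periodization $\mathcal{P}_H$ and the injection $\Phi$ with the injection $\Psi$ defined in Remark \ref{rem:1902}. Alternatively, one may transport the entire argument through the Fourier transform using the identity $\period{H} = \mathcal{F}_{G/H}^{-1}\,\restrict{H^{\perp}}\mathcal{F}_{G}$ and directly invoke Proposition \ref{pr:canonical-preserved} on $\hhat=H^\perp\subseteq\widehat{G}$; I will sketch the direct approach, as it parallels Proposition \ref{pr:canonical-preserved} closely.

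First I would take the canonical dual generators $(h_j)\in\SO(G)^n$ of the Gabor frame $\{\pi(\lambda)g_j\}_{\lambda\in\La,j\in\Z_n}$ and invoke Lemma \ref{le:canonical-dual-form} to write
\[ h_j = \sum_{k\in\Z_n}\sum_{\lambda^\circ\in\Lambda^\circ} d_j(\lambda^\circ,k)\,\pi(\lambda^\circ)\,g_k, \qquad d_j\in\ell^1(\Lambda^\circ\times\Z_n). \]
Next, I would use the structural assumptions. Assumption (i) of Theorem \ref{th:period-non-sep} gives $H\times\{0\}\subseteq\Lambda^\circ$, and the strengthened assumption (ii*) gives $\Psi(\tilde\Lambda^\circ)=\Lambda^\circ\cap(K_H\times\ghat)$. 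Since $K_H$ is a system of coset representatives for $G/H$, every element of $\Lambda^\circ$ admits a \emph{unique} decomposition
\[ \lambda^\circ = \Psi(\tilde\lambda^\circ)+(h,0), \qquad \tilde\lambda^\circ\in\tilde\Lambda^\circ,\ h\in H. \]

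I would then apply $c\,\period{H}$ to the expansion of $h_j$. The key computation is to show
\[ \period{H}\pi\bigl(\Psi(\tilde\lambda^\circ)+(h,0)\bigr)g_k = \pi(\tilde\lambda^\circ)\,\period{H}g_k. \]
This reduces to two ingredients: the cocycle vanishes, $\cocy(\Psi(\tilde\lambda^\circ),(h,0))=\overline{0(\cdot)}=1$, so $\pi(\Psi(\tilde\lambda^\circ)+(h,0))=\pi(\Psi(\tilde\lambda^\circ))\,T_h$; and $\period{H}T_h f=\period{H}f$ for every $h\in H$ and $f\in\SO(G)$, since translation by an element of $H$ only reindexes the integration over $H$ in the definition of $\mathcal{P}_H$. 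Combined with the intertwining relation \eqref{eq:pi-psi-P-intertwining}, this yields the displayed identity.

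Substituting this back, regrouping the $\ell^1$-convergent double sum (justified by Lemma \ref{le:SO-properties}(vii) which ensures $\tilde h_j\in\SO(G/H)$), and recalling $\tilde g_k = c\,\period{H}g_k$ gives
\[ \tilde h_j = \sum_{k\in\Z_n}\sum_{\tilde\lambda^\circ\in\tilde\Lambda^\circ}\tilde d_j(\tilde\lambda^\circ,k)\,\pi(\tilde\lambda^\circ)\,\tilde g_k,\quad \tilde d_j(\tilde\lambda^\circ,k):=\sum_{h\in H}d_j\bigl(\Psi(\tilde\lambda^\circ)+(h,0),k\bigr). \]
Hence each $\tilde h_j$ lies in $\overline{\textnormal{span}}\{\pi(\tilde\lambda^\circ)\tilde g_k: \tilde\lambda^\circ\in\tilde\Lambda^\circ,\,k\in\Z_n\}$, and Theorem \ref{th:period-non-sep} already guarantees it is a dual generator for $\{\pi(\tilde\lambda)\tilde g_j\}$. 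By the uniqueness statement in Lemma \ref{le:canonical-dual-form}, it must be the canonical dual, i.e.\ $\tilde h_j=S_{(\tilde g_j),\tilde\Lambda}^{-1}\tilde g_j$.

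The main obstacle will be the bookkeeping in the decomposition step: verifying that the assignment $\lambda^\circ\mapsto(\tilde\lambda^\circ,h)$ is a genuine bijection $\Lambda^\circ\to\tilde\Lambda^\circ\times H$ requires combining (i) and (ii*) carefully, and one must check that the trivial cocycle observation really does apply to every term (which hinges on having $(h,0)$ in the second argument so that the character part is zero). Once that bijection is in hand and the $\ell^1$-rearrangement is justified via Feichtinger-algebra boundedness of $\mathcal{P}_H$, the rest is formal.
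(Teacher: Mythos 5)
Your plan is correct and follows exactly the route the paper itself indicates: the paper gives no written proof of this lemma, instead pointing either to the Fourier-transform reduction $\period{H}=\mathcal{F}_{G/H}^{-1}\restrict{H^{\perp}}\mathcal{F}_{G}$ or to mimicking the proof of Proposition \ref{pr:canonical-preserved} with $\restrict{H}$, $\Phi$, $(0,\gamma)$ replaced by $\period{H}$, $\Psi$, $(h,0)$, which is precisely what you carry out. Your key identities --- the unique decomposition $\lambda^{\circ}=\Psi(\tilde{\lambda}^{\circ})+(h,0)$ from (i) and (ii*), the triviality of the cocycle $\cocy(\Psi(\tilde{\lambda}^{\circ}),(h,0))$, the invariance $\period{H}T_{h}=\period{H}$, and the appeal to the uniqueness in Lemma \ref{le:canonical-dual-form} --- are the correct periodization analogues of the steps in the paper's proof of Proposition \ref{pr:canonical-preserved}.
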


An alternative proof of these results is to mimic the proof of Theorem \ref{th:sampl-non-sep}, where the crucial Lemma \ref{lem:sampledframes-2} is replaced by the following.

\begin{lemma} For any discrete subgroup $H$ of $G$ and any two functions $f_{1},f_{2} \in\SO(G)$
\begin{equation} 
\big\langle \period{H}f_{1}, \period{H}f_{2} \big\rangle_{L^2 (G/H)} = \frac{1}{\textnormal{s}_{\ghat}(H^{\perp})} \, \sum_{\gamma\in H} \big\langle f_{1}, T_{\gamma} f_{2} \big\rangle_{L^2 (G)}.
\end{equation}
\end{lemma}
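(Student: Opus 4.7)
The strategy is to reduce the identity to its sampling counterpart (Lemma \ref{lem:sampledframes-2}) by transporting everything to the Fourier side, where periodization on $G$ becomes restriction on $\ghat$.

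First, I would use the identity $\period{H} f = \mathcal{F}_{G/H}^{-1} \restrict{H^{\perp}} \mathcal{F}_{G} f$ stated at the opening of Section \ref{sec:periodization}. Since $\mathcal{F}_{G/H} : L^{2}(G/H) \to L^{2}(H^{\perp})$ is a unitary isomorphism (with the measure conventions from Section \ref{sec:prelim} ensuring Fourier inversion), Plancherel immediately gives
\[
\big\langle \period{H}f_{1}, \period{H}f_{2} \big\rangle_{L^{2}(G/H)} = \big\langle \restrict{H^{\perp}}\hat{f}_{1}, \restrict{H^{\perp}}\hat{f}_{2} \big\rangle_{L^{2}(H^{\perp})}.
\]

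Second, because $H$ is discrete in $G$, the annihilator $H^{\perp}$ is a closed co-compact subgroup of $\ghat$, so Lemma \ref{lem:sampledframes-2} applies with $G$ replaced by $\ghat$ and $H$ replaced by $H^{\perp}$. Since the Fourier transform is a continuous bijection from $\SO(G)$ onto $\SO(\ghat)$ (Lemma \ref{le:s0-properties}(iii)), the inputs $\hat{f}_{1},\hat{f}_{2}$ are in $\SO(\ghat)$ and the lemma yields
\[
\big\langle \restrict{H^{\perp}}\hat{f}_{1}, \restrict{H^{\perp}}\hat{f}_{2} \big\rangle_{L^{2}(H^{\perp})} = \frac{1}{\textnormal{s}_{\ghat}(H^{\perp})} \sum_{\gamma \in (H^{\perp})^{\perp}} \big\langle \hat{f}_{1}, E_{\gamma} \hat{f}_{2} \big\rangle_{L^{2}(\ghat)}.
\]
By Pontryagin duality $(H^{\perp})^{\perp} = H$, so the index set is exactly $H$.

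Third, I would convert each Fourier-side inner product back to $L^{2}(G)$ using the intertwining $E_{\gamma}\hat{f}_{2} = \mathcal{F}(T_{-\gamma}f_{2})$ (which follows from $\mathcal{F}T_{x} = E_{-x}\mathcal{F}$ via the identification $G \cong \widehat{\ghat}$), together with Plancherel for $G \leftrightarrow \ghat$, obtaining $\langle \hat{f}_{1}, E_{\gamma}\hat{f}_{2}\rangle_{L^{2}(\ghat)} = \langle f_{1}, T_{-\gamma}f_{2}\rangle_{L^{2}(G)}$. The substitution $\gamma \mapsto -\gamma$, valid because $H$ is a group and the counting sum is translation invariant, produces the stated formula.

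The main obstacle is purely bookkeeping: one must verify that the measure normalizations line up so that exactly $\textnormal{s}_{\ghat}(H^{\perp})^{-1}$ appears as the prefactor. This is guaranteed by the paper's conventions (Weil's formula relates $\mu_{G},\mu_{H},\mu_{G/H}$; the orthogonal measure on $H^{\perp}$ is chosen so Fourier inversion between $H$ and $H^{\perp}$ holds; and $\mu_{\ghat/H^{\perp}}$ is scaled to make $\mu_{\ghat},\mu_{H^{\perp}},\mu_{\ghat/H^{\perp}}$ canonically related), but it requires simultaneously tracking Plancherel on both $G/H \leftrightarrow H^{\perp}$ and $G \leftrightarrow \ghat$. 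As a sanity check one could run the direct Weil-formula proof: expanding the left-hand side gives $\int_{H}\langle f_{1},T_{-z}f_{2}\rangle_{L^{2}(G)}\,d\mu_{H}(z)$, and applying the Poisson summation formula (Lemma \ref{le:s0-properties}(ix)) to the dual pair $H^{\perp} \leftrightarrow H$ identifies $d\mu_{H}$ as $\textnormal{s}_{\ghat}(H^{\perp})^{-1}$ times counting measure, confirming the prefactor.
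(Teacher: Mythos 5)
Your proof is correct: the paper states this lemma without proof, but your Fourier-transference argument --- using $\period{H}f=\mathcal{F}_{G/H}^{-1}\restrict{H^{\perp}}\mathcal{F}_{G}f$, unitarity of the Fourier transforms, and Lemma \ref{lem:sampledframes-2} applied to the co-compact subgroup $H^{\perp}\subseteq\ghat$ with $(H^{\perp})^{\perp}=H$ --- is precisely the reduction of periodization to sampling that the paper outlines at the start of Section \ref{sec:periodization}. The measure bookkeeping also checks out: the paper's orthogonal-measure conventions make $\mathcal{F}_{G/H}:L^{2}(G/H)\to L^{2}(H^{\perp})$ unitary and produce the prefactor $\textnormal{s}_{\ghat}(H^{\perp})^{-1}$, consistent with your direct Weil/Poisson sanity check.
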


\section{Examples}
\label{sec:examples}

In this section we consider several examples. In each of them we detail how we pick the groups $G$, $H$, $\Lambda$ and $\tilde{\Lambda}$, how their respective measures are defined, and we specify how we construct the Banach algebras $\lmodule$ and $\rmodule$.

\subsection{Sampling a separable Gabor frame from $L^{2}(\R^{d})$ to a frame for $\ell^{2}(\Z^{d})$}

Corollary \ref{cor:ex1} below follows by an application of Theorem \ref{th:sampl-non-sep} with the following setup: $G=\ghat=\R^{d}$ (equipped with the Lebesgue measure), $\omega\in \R^{d}$ acts as a character on $x\in \R^{d}$ by $x\mapsto e^{2\pi i x\cdot\omega}$, $H=\alpha a^{-1}\Z^{d}$ (equipped with the counting measure), $\widehat{H}=[0,a \, \alpha^{-1})^{d}$ (equipped with its probability measure and addition modulo $a \, \alpha^{-1}\Z^{d}$), any $\omega\in[0,a \, \alpha^{-1})^{d}$ acts as a character on $x\in \alpha a^{-1}\Z^{d}$ by $x\mapsto e^{2\pi i x\cdot\omega}$. We take the groups $\La,\tilde{\La},\Lac$ and $\tilde{\La}^{\circ}$ to be the lattices (equipped with the counting measure) given by
\begin{align*} 
& \La = \alpha\Z^{d}\times\beta\Z^{d}, \ \ \tilde{\Lambda} = \begin{bmatrix} \alpha & 0 \\ 0 & \beta \end{bmatrix} \cdot \begin{bmatrix} \Z^{d} \\ \{0,1,\ldots,M-1\}^{d}\end{bmatrix},  \\
& \Lac = \begin{bmatrix} 1/\beta & 0 \\ 0 & 1/\alpha \end{bmatrix}\cdot \begin{bmatrix} \Z^{d} \\ \Z^{d}\end{bmatrix} \ \ \text{and} \ \ \tilde{\La}^{\circ} = \begin{bmatrix} 1/\beta & 0 \\ 0 & 1/\alpha \end{bmatrix}\cdot \begin{bmatrix} \Z^{d} \\ \{0,1,\ldots,a-1\}^{d}\end{bmatrix}. \end{align*}
Observe, $\textnormal{s}_{G\times\ghat}(\La) = \textnormal{s}_{H\times\widehat{H}}(\tilde{\La}) = (\alpha\beta)^{d} = (a/M)^{d}$ and $\textnormal{s}_{G}(H) = (\alpha \, a^{-1})^{d}$.
With the notation as in Remark \ref{rem:1901}, we take $K_{H^{\perp}} = [0,a \, \alpha^{-1})^{d}$. Hence the map $\Phi: H\times\widehat{H}=\alpha a^{-1}\Z^{d}\times [0,a \, \alpha^{-1})^{d} \to G\times\ghat=\R^{2d}$ is the one given by $\Phi(x,\omega) = (x,\omega)$, $(x,\omega)\in \alpha a^{-1}\Z^{d}\times [0,a \, \alpha^{-1})^{d}$. One can now verify that the conditions of Theorem \ref{th:sampl-non-sep} and Proposition \ref{pr:canonical-preserved}
are satisfied.
In this case the Banach algebras $\lmodule^{\R^{d}}$ and $\rmodule^{\R^{d}}$ are given by 
\begin{align*} 
\lmodule^{\R^{d}} & = \big\{ {\bf a} \in \mathsf{B}(L^{2}(\R^{d})) \, : \, {\bf{a}} = \sum_{\la\in\La} \mathsf{a}(\lambda) \, \pi(\lambda) , \ \mathsf{a}\in \ell^{1}(\Lambda)\big\}, \\
\rmodule^{\R^{d}} & = \big\{ {\bf b} \in \mathsf{B}(L^{2}(\R^{d})) \, : \, {\bf{b}} = \frac{1}{\alpha\beta} \sum_{\lac\in\Lambda^{\circ}} \mathsf{b}(\lambda^{\circ}) \, \pi(\lambda^{\circ})^{*}, \ \mathsf{b}\in \ell^{1}(\Lambda^{\circ})\big\}, \end{align*}
and $\Vert {\bf a} \Vert_{\lmodule^{\R^{d}}} = \Vert \mathsf{a} \Vert_{1}$ and $\Vert {\bf b} \Vert_{\lmodule^{\R^{d}}} = \Vert \mathsf{b} \Vert_{1}$. Observe that $\lmodule^{\R^{d}}$ is generated by the two unitary operators $U$ and $V$ on $L^{2}(\R^{d})$ given by  
\[ Uf(t) = T_{\alpha} f(t) = f(t-\alpha) \ \ \text{and} \ \ Vf(t) = E_{\beta} f(t) = e^{2\pi i \beta t} f(t), \ \ t\in\R^{d},\]
where as $\rmodule^{\R^{d}}$ is generated by the two unitaries ${U}^{\circ}$ and ${V}^{\circ}$ given by
\[ U^{\circ} f(t) = E_{1/\alpha} f(t) = e^{2\pi i t/\alpha} f(t) \ \ \text{and} \ \ V^{\circ} f(t) = T_{1/\beta} f(t) = f(t-1/\beta), \ \ t\in\R^{d}.\]
The Banach algebras $\lmodule^{\alpha a^{-1}\Z^{d}}$ and $\rmodule^{\alpha a^{-1}\Z^{d}}$ are given by
\begin{align*} 
\lmodule^{\alpha a^{-1}\Z^{d}} & = \big\{ {\bf a} \in \mathsf{B}(\ell^{2}(\alpha a^{-1}\Z^{d})) \, : \, {\bf{a}} = \sum_{\la\in\tilde\La} \mathsf{a}(\lambda) \, \pi(\lambda) , \ \mathsf{a}\in \ell^{1}(\tilde\Lambda)\big\}, \\
\rmodule^{\alpha a^{-1}\Z^{d}} & = \big\{ {\bf b} \in \mathsf{B}(\ell^{2}(\alpha a^{-1}\Z^{d})) \, : \, {\bf{b}} = \frac{1}{\alpha\beta} \sum_{\lac\in\tilde\Lambda^{\circ}} \mathsf{b}(\lambda^{\circ}) \, \pi(\lambda^{\circ})^{*}, \ \mathsf{b}\in \ell^{1}(\tilde\Lambda^{\circ})\big\}. \end{align*}
The algebra $\lmodule^{\alpha a^{-1}\Z^{d}}$ is generated by the unitaries $\tilde{U}$ and $\tilde{V}$ on $\ell^{2}(\alpha a^{-1}\Z^{d})$ given by
\[ \tilde{U} f(t) = T_{\alpha}f(t) = f(t-\alpha) \ \ \text{and} \ \ \tilde{V}f(t) = E_{\beta} f(t) = e^{2\pi i \beta t} f(t-q\alpha), \ \ t\in \alpha a^{-1}\Z^{d},\]
and the algebra $\rmodule^{\alpha a^{-1}\Z^{d}}$ is generated by the unitaries $\tilde{U}^{\circ}$ and $\tilde{V}^{\circ}$ on $\ell^{2}(\alpha a^{-1}\Z^{d})$ given by
\[ \tilde{U}^{\circ} f(t) = E_{1/\alpha}  f(t) = e^{2\pi i t/\alpha} f(t) \ \ \text{and} \ \ \tilde{V}^{\circ} f(t) = T_{1/\beta} f(t) = f(t-1/\beta), \ \ t\in \alpha a^{-1}\Z^{d}. \]
Observe that 
\[ VU = e^{2\pi i d\alpha\beta}\, UV\ , \ \ \tilde{V} \tilde{U} = e^{2\pi i d\alpha\beta} \, \tilde{U} \tilde{V}.\]
I.e., $\lmodule^{\R^{d}}$ and $\lmodule^{\alpha a^{-1} \Z^{d}}$ give rise to the same non-commutative torus.
We can formulate the following result.

\begin{corollary} \label{cor:ex1} Let $\La$ be the lattice in $\R^{2d}$ of the form $\La = \alpha \Z^{d}\times \beta \Z^{d}$, $\alpha,\beta>0$,
and let $(g_{j})$ and $(h_{j})$ be $n$-tuple in $\SO(\R^{d})^{n}$.
Assume that $\gmlhs{(g_{j})}{(h_{j})}{\La}$ is an idempotent element of $\textnormal{M}_{n}(\lmodule^{\R^{d}})$, i.e., $\{\pi(\la)g_{j}\}_{\la\in\La,j\in\Z_{n}}$ and $\{\pi(\la)h_{j}\}_{\la\in\La,j\in\Z_{n}}$ are dual Gabor frames for $L^{2}(\R^{d})$. If $\alpha$ and $\beta$ are such that $\alpha\beta = a/M$ for some $a,M\in\N$ and if we define
\[ \tilde{\Lambda} = \begin{bmatrix} \alpha & 0 \\ 0 & \beta \end{bmatrix} \cdot \begin{bmatrix} \Z^{d} \\ \{0,1,\ldots,M-1\}^{d}\end{bmatrix} \subset \alpha \, a^{-1} \Z^{d} \times [0,M\beta)^{d}, \]
then the $n$-tuple 
$(\tilde{g}_{j})$ and $(\tilde{h}_{j})$ in $(\ell^{1}(\alpha a^{-1}\Z^{d}))^{n}$ given by $\tilde{g}_{j} = c\, \restrict{\alpha a^{-1}\Z^{d}}g_{j}$, $\tilde{h}_{j} = c\, \restrict{\alpha a^{-1}\Z^{d}}h_{j}$, and where 
\[ c= \big(  \alpha a^{-1} \big)^{d/2},\] 
are such that $\gmlhs{(\tilde{g}_{j})}{(\tilde{h}_{j})}{\widetilde{\La}}$ is an idempotent element of $\textnormal{M}_{n}(\lmodule^{\alpha a^{-1}\Z^{d}})$, i.e., the two Gabor systems $\{\pi(\la) \tilde{g}_{j}\}_{\la\in\widetilde{\La},j\in\Z_{n}}$ and $\{\pi(\la) \tilde{h}_{j}\}_{\la\in\widetilde{\La},j\in\Z_{n}}$ are dual frames for $\ell^{2}(\alpha a^{-1}\Z^{d})$. Moreover, canonical pairs of dual Gabor frames are preserved by this process. 
\end{corollary}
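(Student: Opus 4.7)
The plan is to deduce the corollary directly by checking that the specific setup satisfies every hypothesis of Theorem \ref{th:sampl-non-sep} and of Proposition \ref{pr:canonical-preserved}; no new analytical work is required, only routine verification of the algebraic and measure-theoretic data. First I would record the ambient identifications: $G=\widehat{G}=\R^d$ with the Lebesgue measure, $H=\alpha a^{-1}\Z^d$ with counting measure, $\widehat{H}=[0,a\alpha^{-1})^d$ with the probability measure, $H^\perp=a\alpha^{-1}\Z^d\subseteq\widehat{G}$, and the coset representative set $K_{H^\perp}=[0,a\alpha^{-1})^d$. With this choice the extension map $\Phi:H\times\widehat{H}\to G\times\widehat{G}$ reduces to the inclusion $(x,\omega)\mapsto(x,\omega)$, which in particular satisfies $\Phi(0)=0$.

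Next I would verify the two structural hypotheses. Since $a\in\N$, the inclusion $\alpha\Z^d\subseteq\alpha a^{-1}\Z^d$ gives $\Lambda=\alpha\Z^d\times\beta\Z^d\subseteq H\times\widehat{G}$, establishing (i). A direct computation using $\cocy_s((x,\omega),(\lambda,\mu))=e^{2\pi i(\lambda\cdot\omega-x\cdot\mu)}$ yields $\Lambda^\circ=\beta^{-1}\Z^d\times\alpha^{-1}\Z^d$ and $\tilde{\Lambda}^\circ=\beta^{-1}\Z^d\times\alpha^{-1}\{0,1,\dots,a-1\}^d$ (viewed inside $H\times\widehat{H}$, using $\alpha\beta=a/M$ to check the pairing on the second slot). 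Then
\[
\Phi(\tilde{\Lambda}^\circ)=\beta^{-1}\Z^d\times\alpha^{-1}\{0,1,\dots,a-1\}^d\subseteq\Lambda^\circ,
\]
which is (ii). Moreover, intersecting with $G\times K_{H^\perp}=\R^d\times[0,a\alpha^{-1})^d$ cuts $\alpha^{-1}\Z^d$ exactly down to $\alpha^{-1}\{0,1,\dots,a-1\}^d$, so actually $\Phi(\tilde\Lambda^\circ)=\Lambda^\circ\cap(G\times K_{H^\perp})$, which is the strengthened condition (ii*) of Proposition \ref{pr:canonical-preserved}.

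It remains to match the normalizing constant. The covolumes are $\textnormal{s}_{G\times\widehat{G}}(\Lambda)=(\alpha\beta)^d$, $\textnormal{s}_G(H)=(\alpha a^{-1})^d$, and $\textnormal{s}_{H\times\widehat{H}}(\tilde{\Lambda})=a^d\cdot M^{-d}=(\alpha\beta)^d$ (the first factor from $(\alpha a^{-1}\Z^d)/(\alpha\Z^d)\cong\Z_a^d$ under the counting measure, the second from a subgroup of order $M$ inside the probability group $\widehat{H}$). Consequently
\[
\big(\textnormal{s}_{H\times\widehat{H}}(\tilde{\Lambda})\,\textnormal{s}_G(H)/\textnormal{s}_{G\times\widehat{G}}(\Lambda)\big)^{1/2}=(\alpha a^{-1})^{d/2},
\]
exactly the constant stated. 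Invoking Theorem \ref{th:sampl-non-sep} now delivers the dual pair assertion together with the frame-bound estimate (noting that $\SO(H)=\ell^1(\alpha a^{-1}\Z^d)$ by Lemma \ref{le:SO-properties}(ii)), while Proposition \ref{pr:canonical-preserved} yields the preservation of canonical duality. The only step that calls for any care is the bookkeeping in the identification $\tilde{\Lambda}^\circ\subseteq H\times\widehat{H}$, specifically confirming that under the probability normalization on $\widehat{H}$ the adjoint of the second-coordinate subgroup is indeed $\alpha^{-1}\{0,\dots,a-1\}^d$; everything else is direct substitution.
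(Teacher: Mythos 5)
Your proposal is correct and follows exactly the route the paper takes: the same choices of $G$, $H$, $\widehat{H}$, $K_{H^{\perp}}$, the same computation of $\Lambda^{\circ}$, $\tilde{\Lambda}^{\circ}$ and the covolumes, and the same appeal to Theorem \ref{th:sampl-non-sep} and Proposition \ref{pr:canonical-preserved}. In fact you carry out the verifications of (i), (ii) and (ii*) that the paper leaves to the reader, and all of them check out.
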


\subsection{Sampling a non-separable 
Gabor frame for $L^{2}(\R)$ to a frame for $\ell^{2}(\Z)$}
Corollary \ref{cor:ex2} below follows by an application of Theorem \ref{th:sampl-non-sep} with the following setup:
we take $G=\ghat=\R$ (equipped with the Lebesgue measure), $\omega\in \R$ acts as a character on $x\in \R$ by $x\mapsto e^{2\pi i x\omega}$, $H=\alpha (sa)^{-1}\Z$ (equipped with the counting measure), $\widehat{H}=[0,sa \, \alpha^{-1})$ (equipped with its probability measure and addition modulo $sa \, \alpha^{-1}$), $\omega\in[0,sa \, \alpha^{-1})$ acts as a chatacter on $x\in \alpha (sa)^{-1}\Z$ by $x\mapsto e^{2\pi i x\omega}$, the groups $\La,\tilde{\La},\Lac$ and $\tilde{\La}^{\circ}$ are equipped with the counting measure and given by 
\begin{align*} & \Lambda = \begin{bmatrix} \alpha & q\alpha \\ 0 & \beta \end{bmatrix}\cdot\begin{bmatrix} \Z \\ \Z \end{bmatrix}, \ \ \alpha,\beta>0, q\in(0,1), q=r/s, \ r,s\in\N, \\
& \tilde{\Lambda} = \begin{bmatrix} \alpha & q\alpha \\ 0 & \beta \end{bmatrix}\cdot\begin{bmatrix} \Z \\ \{0,1,\ldots,sM-1\} \end{bmatrix} \subset \frac{\alpha}{sa}\Z \times [0,\frac{sa}{\alpha}), \\
&  \Lac = \begin{bmatrix} q/\beta & 1/\beta \\ 1/\alpha & 0 \end{bmatrix}\cdot \begin{bmatrix} \Z \\ \Z\end{bmatrix} \ \ \text{and} \ \ \tilde{\La}^{\circ} = \begin{bmatrix} q/\beta & 1/\beta \\ 1/\alpha & 0 \end{bmatrix}\cdot \begin{bmatrix} \Z \\ \{0,1,\ldots,sa-1\}\end{bmatrix}. \end{align*}
Observe, $\textnormal{s}_{G\times\ghat}(\La) = \textnormal{s}_{H\times\widehat{H}}(\tilde{\La}) = \alpha\beta = a/M$ and $\textnormal{s}_{G}(H) = \alpha (sa)^{-1}$.
With the notation as in Remark \ref{rem:1901}, we take $K_{H^{\perp}} = [0,sa \, \alpha^{-1})$. Hence the map $\Phi: H\times\widehat{H}=\alpha (sa)^{-1}\Z\times [0,sa \, \alpha^{-1}) \to G\times\ghat=\R^{2}$ is the one given by $\Phi(x,\omega) = (x,\omega)$, $(x,\omega)\in \alpha (sa)^{-1}\Z\times [0,sa \, \alpha^{-1})$. One can now verify that the conditions of Theorem \ref{th:sampl-non-sep} and Proposition \ref{pr:canonical-preserved}
are satisfied.
In this case the Banach algebras $\lmodule^{\R}$ and $\rmodule^{\R}$ are given by 
\begin{align*} 
\lmodule^{\R} & = \big\{ {\bf a} \in \mathsf{B}(L^{2}(\R)) \, : \, {\bf{a}} = \sum_{\la\in\La} \mathsf{a}(\lambda) \, \pi(\lambda) , \ \mathsf{a}\in \ell^{1}(\Lambda)\big\}, \\
\rmodule^{\R} & = \big\{ {\bf b} \in \mathsf{B}(L^{2}(\R)) \, : \, {\bf{b}} = \frac{1}{\alpha\beta} \sum_{\lac\in\Lambda^{\circ}} \mathsf{b}(\lambda^{\circ}) \, \pi(\lambda^{\circ})^{*}, \ \mathsf{b}\in \ell^{1}(\Lambda^{\circ})\big\}, \end{align*}
and $\Vert {\bf a} \Vert_{\lmodule^{\R}} = \Vert \mathsf{a} \Vert_{1}$ and $\Vert {\bf b} \Vert_{\lmodule^{\R}} = \Vert \mathsf{b} \Vert_{1}$. Observe that $\lmodule^{\R}$ is generated by the two unitary operators $U$ and $V$ on $L^{2}(\R)$ given by  
\[ Uf(t) = T_{\alpha} f(t) = f(t-\alpha) \ \ \text{and} \ \ Vf(t) = E_{\beta}T_{q\alpha} f(t) = e^{2\pi i \beta t} f(t-q\alpha), \ \ t\in\R,\]
where as $\rmodule^{\R}$ is generated by the two unitaries ${U}^{\circ}$ and ${V}^{\circ}$ given by
\[ U^{\circ} f(t) = E_{1/\alpha} T_{q/\beta} f(t) = e^{2\pi i t/\alpha} f(t-q/\beta) \ \ \text{and} \ \ V^{\circ} f(t) = T_{1/\beta} f(t) = f(t-1/\beta), \ \ t\in\R.\]
The Banach algebras $\lmodule^{\alpha (sa)^{-1}\Z}$ and $\rmodule^{\alpha (sa)^{-1}\Z}$ are given by
\begin{align*} 
\lmodule^{\alpha (sa)^{-1}\Z} & = \big\{ {\bf a} \in \mathsf{B}(\ell^{2}(\alpha (sa)^{-1}\Z)) \, : \, {\bf{a}} = \sum_{\la\in\tilde\La} \mathsf{a}(\lambda) \, \pi(\lambda) , \ \mathsf{a}\in \ell^{1}(\tilde\Lambda)\big\}, \\
\rmodule^{\alpha (sa)^{-1}\Z} & = \big\{ {\bf b} \in \mathsf{B}(\ell^{2}(\alpha (sa)^{-1}\Z)) \, : \, {\bf{b}} = \frac{1}{\alpha\beta} \sum_{\lac\in\tilde\Lambda^{\circ}} \mathsf{b}(\lambda^{\circ}) \, \pi(\lambda^{\circ})^{*}, \ \mathsf{b}\in \ell^{1}(\tilde\Lambda^{\circ})\big\}. \end{align*}
The algebra $\lmodule^{\alpha (sa)^{-1}\Z}$ is generated by the unitaries $\tilde{U}$ and $\tilde{V}$ on $\ell^{2}(\alpha (sa)^{-1}\Z)$ given by
\[ \tilde{U} f(t) = T_{\alpha}f(t) = f(t-\alpha) \ \ \text{and} \ \ \tilde{V}f(t) = E_{\beta}T_{q\alpha} f(t) = e^{2\pi i \beta t} f(t-q\alpha), \ \ t\in \alpha (sa)^{-1}\Z,\]
and the algebra $\rmodule^{\alpha (sa)^{-1}\Z}$ is generated by the unitaries $\tilde{U}^{\circ}$ and $\tilde{V}^{\circ}$ on $\ell^{2}(\alpha (sa)^{-1}\Z)$ given by
\[ \tilde{U}^{\circ} f(t) = E_{1/\alpha} T_{q/\beta} f(t) = e^{2\pi i t/\alpha} f(t-q/\beta) \ \ \text{and} \ \ \tilde{V}^{\circ} f(t) = T_{1/\beta} f(t) = f(t-1/\beta), \ \ t\in \alpha (sa)^{-1}\Z. \]
Observe that 
\[ VU = e^{2\pi i \alpha\beta}\, UV\ , \ \ \tilde{V} \tilde{U} = e^{2\pi i \alpha\beta} \, \tilde{U} \tilde{V}.\]

We can formulate the following result.
\begin{corollary} \label{cor:ex2} Let $\La$ be the lattice in $\R^{2}$ given by
\[ \Lambda = \begin{bmatrix} \alpha & q\alpha \\ 0 & \beta \end{bmatrix}\cdot\begin{bmatrix} \Z \\ \Z \end{bmatrix}, \ \ \alpha,\beta>0, q\in(0,1), q=r/s, \ r,s\in\N, \]
and let $(g_{j})$ and $(h_{j})$ be $n$-tuple in $\SO(\R)^{n}$.
Assume that $\gmlhs{(g_{j})}{(h_{j})}{\La}$ is an idempotent element of $\textnormal{M}_{n}(\lmodule^{\R})$, i.e., $\{\pi(\la)g_{j}\}_{\la\in\La,j\in\Z_{n}}$ and $\{\pi(\la)h_{j}\}_{\la\in\La,j\in\Z_{n}}$ are dual Gabor frames for $L^{2}(\R)$. If $\alpha$ and $\beta$ are such that $\alpha\beta = a/M$ for some $a,M\in\N$ and 
\[ \tilde{\Lambda} = \begin{bmatrix} \alpha & q\alpha \\ 0 & \beta \end{bmatrix}\cdot\begin{bmatrix} \Z \\ \{0,1,\ldots,sM-1\} \end{bmatrix} \subset \frac{\alpha}{sa}\Z \times [0,\frac{sa}{\alpha}), \]
then the $n$-tuple 
$(\tilde{g}_{j})$ and $(\tilde{h}_{j})$ in $(\ell^{1}(\alpha (s a)^{-1}\Z))^{n}$ given by $\tilde{g}_{j} = c\, \restrict{\alpha (sa)^{-1}\Z}g_{j}$, $\tilde{h}_{j} = c\, \restrict{\alpha (sa)^{-1}\Z}h_{j}$, and where 
\[ c= \big(  \alpha (sa)^{-1} \big)^{1/2},\] 
are such that $\gmlhs{(\tilde{g}_{j})}{(\tilde{h}_{j})}{\widetilde{\La}}$ is an idempotent element of $\textnormal{M}_{n}(\lmodule^{\alpha (sa)^{-1}\Z})$, i.e., the two Gabor systems $\{\pi(\la) \tilde{g}_{j}\}_{\la\in\widetilde{\La},j\in\Z_{n}}$ and $\{\pi(\la) \tilde{h}_{j}\}_{\la\in\widetilde{\La},j\in\Z_{n}}$ are dual frames for $\ell^{2}(\alpha (sa)^{-1}\Z)$. Moreover, canonical pairs of dual Gabor frames are preserved by this process. 
\end{corollary}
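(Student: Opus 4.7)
The strategy is to apply Theorem \ref{th:sampl-non-sep} together with Proposition \ref{pr:canonical-preserved} to the explicit data fixed in the paragraph preceding the corollary: $G=\ghat=\R$ with Lebesgue measure, $H=\alpha(sa)^{-1}\Z$ with counting measure, $\widehat{H}\cong[0,sa\alpha^{-1})$ as a compact group, and the lift $K_{H^{\perp}}=[0,sa\alpha^{-1})$ chosen so that the identifying map $\Phi$ of Remark \ref{rem:1901} is the identity on coordinates.

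I will first verify hypothesis (i) of Theorem \ref{th:sampl-non-sep}. The subgroup $H$ is evidently closed and cocompact in $\R$, and for every $\lambda=(\alpha k+q\alpha m,\beta m)\in\Lambda$ the first coordinate equals $\alpha(k+rm/s)\in\alpha s^{-1}\Z\subseteq H$, so $\Lambda\subseteq H\times\ghat$. The subgroup $\tilde{\Lambda}$ is discrete in $H\times\widehat{H}$ by inspection; that it is cocompact follows from the compactness of $\widehat{H}$ together with the fact that $(H\times\widehat{H})/\tilde{\Lambda}$ is finite.

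Next I verify the strengthened hypothesis (ii*) of Proposition \ref{pr:canonical-preserved}. Solving $\cocy_s((x,\omega),\lambda)=1$ for every $\lambda\in\Lambda$ yields
\[
\Lambda^{\circ}=\bigl\{\bigl((ql+j)/\beta,\,l/\alpha\bigr):l,j\in\Z\bigr\},
\]
matching the formula stated above the corollary. Intersecting with $\R\times K_{H^{\perp}}=\R\times[0,sa\alpha^{-1})$ forces $l\in\{0,\dots,sa-1\}$ while leaving $j\in\Z$ unrestricted, and under the identity action of $\Phi$ this set coincides with $\tilde{\Lambda}^{\circ}$ as listed, establishing (ii*). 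This verification is the crux of the argument; the main obstacle is to keep the bookkeeping straight between $\tilde{\Lambda}^{\circ}$ as a subgroup of $H\times\widehat{H}$ and its image under $\Phi$ as a sublattice of $\Lambda^{\circ}\subset G\times\ghat$, where the non-separability of $\Lambda$ (the shear term $q\alpha$) couples the two coordinates of the adjoint lattice.

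Finally I substitute the covolumes $s_{G\times\ghat}(\Lambda)=s_{H\times\widehat{H}}(\tilde{\Lambda})=\alpha\beta$ and $s_{G}(H)=\alpha(sa)^{-1}$ into the prescription $c=\bigl(s_{H\times\widehat{H}}(\tilde{\Lambda})\,s_{G}(H)/s_{G\times\ghat}(\Lambda)\bigr)^{1/2}$ of Theorem \ref{th:sampl-non-sep}, obtaining $c=\bigl(\alpha(sa)^{-1}\bigr)^{1/2}$ as claimed. The conclusion that $\gmlhs{(\tilde{g}_{j})}{(\tilde{h}_{j})}{\widetilde{\Lambda}}$ is an idempotent element of $\textnormal{M}_{n}(\lmodule^{\alpha(sa)^{-1}\Z})$ giving the announced dual pair of multi-window Gabor frames in $\ell^{2}(\alpha(sa)^{-1}\Z)$, together with the preservation of canonical duals, are then the direct consequences of Theorem \ref{th:sampl-non-sep} and Proposition \ref{pr:canonical-preserved}, respectively.
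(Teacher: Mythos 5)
Your proposal is correct and follows essentially the same route as the paper, which for this corollary merely fixes the data $G=\ghat=\R$, $H=\alpha(sa)^{-1}\Z$, $K_{H^{\perp}}=[0,sa\alpha^{-1})$, records the lattices $\La$, $\tilde{\La}$, $\Lac$, $\tilde{\La}^{\circ}$ and the covolumes, and asserts that the hypotheses of Theorem \ref{th:sampl-non-sep} and Proposition \ref{pr:canonical-preserved} can be checked; you actually carry out the verification of (i) and (ii*), which is the intended (and here nontrivial, because of the shear $q\alpha$) content. One small inaccuracy: the quotient $(H\times\hhat)/\tilde{\Lambda}$ is compact but not finite (its image in the circle $\hhat$ modulo the finite projection of $\tilde{\Lambda}$ is still a continuum); cocompactness nevertheless follows at once since $\tilde{\Lambda}\supseteq\alpha\Z\times\{0\}$, which is already cocompact in $\alpha(sa)^{-1}\Z\times\hhat$.
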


\subsection{Sampling a certain Gabor frame for $L^{2}(\R\times\Z_{q})$ to a frame for $\ell^{2}(\Z\times\Z_{q})$}

Corollary \ref{cor:ex2} below follows by an application of Theorem \ref{th:sampl-non-sep} to projective modules constructed over noncommutative tori by Connes that these may be described in terms of Gabor frames was noted in \cite{dajalalu18}. Concretely, we have the following setup:
we take $G=\ghat=\R\times\Z_{q}$ (equipped with the Lebesgue measure on $\R$ and the counting measure on $\Z_{q}$), $(\omega,k)\in \R\times\Z_{q}$ acts as a character on $(x,m)\in \R$ by $(x,m)\mapsto e^{2\pi i (x\omega+mk/q)}$, $H=\alpha a^{-1}\Z\times\Z_{q}$ (equipped with the counting measure), $\widehat{H}=[0,a \, \alpha^{-1})\times\Z_{q}$ (equipped with its probability measure and addition modulo $(a \, \alpha^{-1},q)$), $(\omega,k)\in[0,a \, \alpha^{-1})\times\Z_{q}$ acts as a chatacter on $(x,m)\in \alpha a^{-1}\Z\times \Z_{q}$ by $(x,m)\mapsto e^{2\pi i (x\omega+mk/q)}$. The groups $\La,\tilde{\La},\Lac$ and $\tilde{\La}^{\circ}$ are equipped with the counting measure and given by
\begin{align*}
    & \Lambda = \begin{bmatrix} \alpha & 0 \\ r & 0 \\ 0 & \beta \\
0 & s \end{bmatrix}\cdot \begin{bmatrix} \Z \\ \Z \end{bmatrix},\\
& \tilde{\Lambda} = \begin{bmatrix} \alpha & 0 \\ r & 0 \\ 0 & \beta \\
0 & s \end{bmatrix}\cdot \begin{bmatrix} \Z \\ \{0,1,\ldots,qM-1\} \end{bmatrix} \subset \frac{\alpha}{a}\Z\times\Z_{q} \times [0,\frac{a}{\alpha})\times\Z_{q}, \\
& \Lac = \begin{bmatrix} (\beta q)^{-1} & 0 \\ -s^{\circ} & 0 \\ 0 & (\alpha q)^{-1} \\
0 & -r^{\circ}\end{bmatrix}\cdot \begin{bmatrix} \Z \\ \Z \end{bmatrix} \ \ \text{and} \ \ \tilde{\La}^{\circ} = \begin{bmatrix} (\beta q)^{-1} & 0 \\ -s^{\circ} & 0 \\ 0 & (\alpha q)^{-1} \\
0 & -r^{\circ}\end{bmatrix}\cdot \begin{bmatrix} \Z \\ \{0,1,\ldots, aq\} \end{bmatrix}, \end{align*}
where $r^{\circ}$ and $s^{\circ}$ are integers such that there exists $l_{r},l_{s}\in\Z$ such that
\[ r r^{\circ} + ql_{r}=1 \ \ \text{and} \ \ s s^{\circ} + ql_{s}=1. \]
Observe, $\textnormal{s}_{G\times\ghat}(\La) = \textnormal{s}_{H\times\widehat{H}}(\tilde{\La}) = q\alpha\beta = a/M$ and $\textnormal{s}_{G}(H) = \alpha a^{-1}$.
With the notation as in Remark \ref{rem:1901}, we take $K_{H^{\perp}} = [0,a \, \alpha^{-1})\times\{0\} \subseteq \R\times\Z_{q}$. Hence the map $\Phi: H\times\widehat{H}=\alpha a^{-1}\Z\times\Z_{q} \times [0,a \, \alpha^{-1})\times\Z_{q} \to G\times\ghat=\R\times\Z_{q}\times\R\times\Z_{q}$ is the one given by $\Phi(x,m,\omega,k) = (x,m,\omega,k)$. One can now verify that the conditions of Theorem \ref{th:sampl-non-sep} and Proposition \ref{pr:canonical-preserved}
are satisfied.
In this case the Banach algebras $\lmodule^{\R\times\Z_{q}}$ and $\rmodule^{\R\times\Z_{q}}$ are given by 
\begin{align*} 
\lmodule^{\R\times\Z_{q}} & = \big\{ {\bf a} \in \mathsf{B}(L^{2}(\R\times\Z_{q})) \, : \, {\bf{a}} = \sum_{\la\in\La} \mathsf{a}(\lambda) \, \pi(\lambda) , \ \mathsf{a}\in \ell^{1}(\Lambda)\big\}, \\
\rmodule^{\R\times\Z_{q}} & = \big\{ {\bf b} \in \mathsf{B}(L^{2}(\R\times\Z_{q})) \, : \, {\bf{b}} = \frac{1}{q\alpha\beta} \sum_{\lac\in\Lambda^{\circ}} \mathsf{b}(\lambda^{\circ}) \, \pi(\lambda^{\circ})^{*}, \ \mathsf{b}\in \ell^{1}(\Lambda^{\circ})\big\}, \end{align*}
and $\Vert {\bf a} \Vert_{\lmodule^{\R\times\Z_{q}}} = \Vert \mathsf{a} \Vert_{1}$ and $\Vert {\bf b} \Vert_{\lmodule^{\R\times\Z_{q}}} = \Vert \mathsf{b} \Vert_{1}$. Observe that $\lmodule^{\R\times\Z_{q}}$ is generated by the two unitary operators $U$ and $V$ on $L^{2}(\R\times\Z_{q})$ given by  
\[ Uf(t) = T_{\alpha,r} f(t,k) = f(t-\alpha,r-k) \ \ \text{and} \ \ Vf(t,k) = E_{\beta,s} f(t,k) = e^{2\pi i (\beta t+ks/q)} f(t,k), \ \ (t,k)\in\R\times\Z_{q},\]
where as $\rmodule^{\R\times\Z_{q}}$ is generated by the two unitaries ${U}^{\circ}$ and ${V}^{\circ}$ given by
\[ U^{\circ} f(t,k) = E_{(q\alpha)^{-1},-r^{\circ}} f(t,k) = e^{2\pi i (t (q\alpha)^{-1}-kr^{\circ}/q)} f(t,k)\]
and
\[ V^{\circ} f(t,k) = T_{(q\beta)^{-1},-s^{\circ}} f(t,k) = f(t-1/q\beta,k+s^{\circ}), \ \ (t,k)\in\R\times\Z_{q}.\]
The Banach algebras $\lmodule^{\alpha a^{-1}\Z \times\Z_{q}}$ and $\rmodule^{\alpha a^{-1}\Z \times\Z_{q}}$ are given by
\begin{align*} 
\lmodule^{\alpha a^{-1}\Z \times\Z_{q}} & = \big\{ {\bf a} \in \mathsf{B}(\ell^{2}(\alpha a^{-1}\Z \times\Z_{q})) \, : \, {\bf{a}} = \sum_{\la\in\tilde\La} \mathsf{a}(\lambda) \, \pi(\lambda) , \ \mathsf{a}\in \ell^{1}(\tilde\Lambda)\big\}, \\
\rmodule^{\alpha a^{-1}\Z \times\Z_{q}} & = \big\{ {\bf b} \in \mathsf{B}(\ell^{2}(\alpha a^{-1}\Z \times\Z_{q})) \, : \, {\bf{b}} = \frac{1}{q\alpha\beta} \sum_{\lac\in\tilde\Lambda^{\circ}} \mathsf{b}(\lambda^{\circ}) \, \pi(\lambda^{\circ})^{*}, \ \mathsf{b}\in \ell^{1}(\tilde\Lambda^{\circ})\big\}. \end{align*}
The algebra $\lmodule^{\alpha a^{-1}\Z \times\Z_{q}}$ is generated by the unitaries $\tilde{U}$ and $\tilde{V}$ on $\ell^{2}(\alpha a^{-1}\Z \times\Z_{q})$ given by
\begin{align*}
    & \tilde{U} f(t,k) = T_{\alpha,r}f(t,k) = f(t-\alpha,k-r), \\
    & \tilde{V}f(t,k) = E_{\beta,s} f(t,k) = e^{2\pi i (\beta t+s k/q)} f(t,k), \ \ (t,k)\in \alpha a^{-1}\Z \times\Z_{q},\end{align*}
and the algebra $\rmodule^{\alpha a^{-1}\Z\times\Z_{q}}$ is generated by the unitaries $\tilde{U}^{\circ}$ and $\tilde{V}^{\circ}$ on $\ell^{2}(\alpha a^{-1}\Z\times\Z_{q})$ given by
\[ \tilde{U}^{\circ} f(t,k) = E_{(q\alpha)^{-1},-r^{\circ}} f(t,k) = e^{2\pi i (t (q\alpha)^{-1}-kr^{\circ}/q)} f(t,k), \ (t,k)\in \alpha a^{-1}\Z\times\Z_{q},\]
and
\[ \tilde{V}^{\circ} f(t,k) = T_{(q\beta)^{-1},-s^{\circ}} f(t,k) = f(t-1/q\beta,k+s^{\circ}), \ \ (t,k)\in\R\times\Z_{q}.\]
Observe that 
\[ VU = e^{2\pi i (\alpha\beta+rs/q)}\, UV\ , \ \ \tilde{V} \tilde{U} = e^{2\pi i (\alpha\beta+rs/q)} \, \tilde{U} \tilde{V}.\]

\begin{corollary} \label{cor:ex2} Let $q,r,s\in\N$ and let $r$ and $s$ be such that they are co-prime to $q$. Let $\La$ be the lattice in $(\R\times\Z_{q})\times(\R\times\Z_{q})$ given by
\[ \Lambda = \begin{bmatrix} \alpha & 0 \\ r & 0 \\ 0 & \beta \\
0 & s \end{bmatrix}\cdot \begin{bmatrix} \Z \\ \Z \end{bmatrix}\]
and let $(g_{j})$ and $(h_{j})$ be $n$-tuple in $\SO(\R\times\Z_{q})^{n}$.
Assume that $\gmlhs{(g_{j})}{(h_{j})}{\La}$ is an idempotent element of $\textnormal{M}_{n}(\lmodule^{\R\times\Z_{q}})$, i.e., $\{\pi(\la)g_{j}\}_{\la\in\La,j\in\Z_{n}}$ and $\{\pi(\la)h_{j}\}_{\la\in\La,j\in\Z_{n}}$ are dual Gabor frames for $L^{2}(\R\times\Z_{q})$. If $\alpha,\beta$ and $q$ are such that $q\alpha\beta = a /M$ for some $a,M\in\N$ and we take
\[ \tilde{\Lambda} = \begin{bmatrix} \alpha & 0 \\ r & 0 \\ 0 & \beta \\
0 & s \end{bmatrix}\cdot \begin{bmatrix} \Z \\ \{0,1,\ldots,qM-1\} \end{bmatrix} \subset \frac{\alpha}{a}\Z\times\Z_{q} \times [0,\frac{a}{\alpha})\times\Z_{q}, \]
then the $n$-tuple 
$(\tilde{g}_{j})$ and $(\tilde{h}_{j})$ in $(\ell^{1}(\alpha a^{-1}\Z)\times\Z_{q}))^{n}$ given by $\tilde{g}_{j} = c\, \restrict{\alpha a^{-1}\Z\times\Z_{q}}g_{j}$, $\tilde{h}_{j} = c\, \restrict{\alpha a^{-1}\Z\times\Z_{q}}h_{j}$, and where 
\[ c= \big(  \alpha a^{-1} \big)^{1/2},\] 
are such that $\gmlhs{(\tilde{g}_{j})}{(\tilde{h}_{j})}{\widetilde{\La}}$ is an idempotent element of $\textnormal{M}_{n}(\lmodule^{\alpha a^{-1}\Z\times\Z_{q}})$, i.e., the two Gabor systems $\{\pi(\la) \tilde{g}_{j}\}_{\la\in\widetilde{\La},j\in\Z_{n}}$ and $\{\pi(\la) \tilde{h}_{j}\}_{\la\in\widetilde{\La},j\in\Z_{n}}$ are dual frames for $\ell^{2}(\alpha a^{-1}\Z\times\Z_{q})$. Moreover, canonical pairs of dual Gabor frames are preserved by this process. 
\end{corollary}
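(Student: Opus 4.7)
The plan is to realize this corollary as a direct specialization of Theorem~\ref{th:sampl-non-sep} together with Proposition~\ref{pr:canonical-preserved}, with the ambient group $G=\widehat{G}=\R\times\Z_{q}$ and the subgroup $H=\alpha a^{-1}\Z\times\Z_{q}$ of $G$. All of the preparatory work has been carried out in the discussion preceding the corollary; what remains is to verify that the hypotheses (i) and (ii$^{*}$) really hold for the $\Lambda$, $\tilde{\Lambda}$, and $\Phi$ specified there, and then to quote the two theorems.

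First I would fix the Haar measures as stated (Lebesgue on $\R$, counting measure on $\Z_{q}$, on $H$, on $\Lambda$ and on $\tilde{\Lambda}$) and verify that they are canonically related in the sense of~\eqref{eq:2503a}, so that the covolume computations $\textnormal{s}_{G\times\widehat{G}}(\Lambda)=\textnormal{s}_{H\times\widehat{H}}(\tilde{\Lambda})=q\alpha\beta=a/M$ and $\textnormal{s}_{G}(H)=\alpha a^{-1}$ are justified; with these values, the definition of $c$ in Theorem~\ref{th:sampl-non-sep} reduces to $c=(\alpha a^{-1})^{1/2}$, matching the normalization in the corollary. Next I would check condition~(i) of Theorem~\ref{th:sampl-non-sep}: since the first and second rows of the generator matrix for $\Lambda$ take values in $\alpha\Z\oplus r\Z\subseteq \alpha a^{-1}\Z\times\Z_{q}=H$, we have $\Lambda\subseteq H\times\widehat{G}$.

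The technical heart of the argument is condition (ii$^{*}$), which requires computing the adjoint $\Lambda^{\circ}$ in $(\R\times\Z_{q})\times(\R\times\Z_{q})$ and matching it against $\Phi(\tilde{\Lambda}^{\circ})$. Using the cocycle $\cocy_{s}$ from~\eqref{eq:def-cocy-sym} and the coprimality assumptions on $r,s$ with $q$, the conditions $\cocy_{s}(\chi,\lambda)=1$ for all $\lambda\in\Lambda$ become two simultaneous congruences/integrality conditions on the first and second "column" of the candidate element of $\Lambda^{\circ}$; solving these with the Bezout integers $r^{\circ},s^{\circ}$ introduced before the corollary yields exactly the columns of $\Lac$ written in the setup. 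An analogous computation for $\tilde{\Lambda}$ in $(H\times\widehat{H})^{2}$ produces $\tilde{\Lambda}^{\circ}$; since $K_{H^{\perp}}=[0,a\alpha^{-1})\times\{0\}$ and $\Phi$ acts as the identity on coordinates, the equality $\Phi(\tilde{\Lambda}^{\circ})=\Lambda^{\circ}\cap(G\times K_{H^{\perp}})$ reduces to checking that restricting the frequency coordinate of $\Lac$ to $[0,a\alpha^{-1})\times\{0\}$ truncates the $\Z$-index in the second generator to $\{0,1,\ldots,aq-1\}$, which follows from the explicit form of $\Lac$ and $\tilde{\Lambda}^{\circ}$.

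With (i) and (ii$^{*}$) verified, Theorem~\ref{th:sampl-non-sep} delivers that $(\tilde{g}_{j}),(\tilde{h}_{j})$ generate dual multi-window Gabor frames for $\ell^{2}(\alpha a^{-1}\Z\times\Z_{q})$, equivalently $\gmlhs{(\tilde{g}_{j})}{(\tilde{h}_{j})}{\widetilde{\La}}$ is an idempotent in $\textnormal{M}_{n}(\lmodule^{\alpha a^{-1}\Z\times\Z_{q}})$; Proposition~\ref{pr:canonical-preserved} promotes this to preservation of canonical dual pairs. I expect the main obstacle to be the bookkeeping in computing $\Lambda^{\circ}$ and $\tilde{\Lambda}^{\circ}$ and confirming the equality in (ii$^{*}$): the mixing of a continuous factor $\R$ with the finite cyclic factor $\Z_{q}$ means the symplectic cocycle splits into a real-valued exponential part and a mod-$q$ part, and one must be careful that the Bezout inverses $r^{\circ},s^{\circ}$ together with the range $\{0,\ldots,aq-1\}$ in $\tilde{\Lambda}^{\circ}$ hit every coset of $H^{\perp}=\{0\}\times\widehat{\Z_{q}}$-type data in $\Lac$ exactly once, so that $\Phi$ is a bijection onto $\Lambda^{\circ}\cap(G\times K_{H^{\perp}})$ rather than merely an injection. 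Once this is in place, the corollary follows from Theorem~\ref{th:sampl-non-sep} and Proposition~\ref{pr:canonical-preserved} verbatim.
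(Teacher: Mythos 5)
Your proposal follows exactly the route the paper takes: the corollary is obtained by specializing Theorem~\ref{th:sampl-non-sep} and Proposition~\ref{pr:canonical-preserved} to $G=\R\times\Z_{q}$, $H=\alpha a^{-1}\Z\times\Z_{q}$, with the measures, covolumes, adjoint lattices (via the Bezout integers $r^{\circ},s^{\circ}$), and the map $\Phi$ with $K_{H^{\perp}}=[0,a\alpha^{-1})\times\{0\}$ set up as in the discussion preceding the statement, and the paper likewise leaves the verification of conditions (i) and (ii$^{*}$) as a routine check. The points you flag as requiring care (the splitting of the symplectic cocycle into a real and a mod-$q$ part, and the bijectivity of $\Phi$ onto $\Lambda^{\circ}\cap(G\times K_{H^{\perp}})$) are precisely the computations the paper implicitly relies on, so your argument is correct and essentially identical to the paper's.
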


\subsection{Sampling and Periodizing Gabor frames from $L^{2}(\R)$ to a frame for $\C^{d}$.}

This example (for single window dual frames) can be found in \cite{so07}.
The result follows by an application of Theorem \ref{th:sampl-non-sep} followed by an application of Theorem \ref{th:period-non-sep} (or the other way around). We leave the details to the reader. 
The algebra $\lmodule^{\R}$ is generated by the unitary operators $U$ and $V$ on $L^{2}(\R)$ given by 
\[ Uf(t) = E_{\beta}f(t) = e^{2\pi i \beta t} f(t), \ Vf(t) = T_{\alpha}f(t) = f(t-\alpha), \ \ t\in\R\]
so that
\[ \lmodule^{\R} = \big\{ {\bf a} \in \mathsf{B}(L^{2}(\R)) \, : \, {\bf{a}} = \sum_{m,n\in\Z} \mathsf{a}(m,n) \, E_{m\beta}T_{n\alpha} , \ \mathsf{a}\in \ell^{1}(\Z^{2})\big\}.\]
The algebra $\lmodule^{\Z_{d}}$ is generated by the two unitary operators $\tilde{U}$ and $\tilde{V}$ that act on $\C^{d}$ given by
\[ \tilde{U}f(t) = E_{b}f(t) = e^{2\pi i b t/d}f(t) \ \ \tilde{V}f(t) = T_{a}f(t) = f(t-a), \ \ t\in\{0,1,\ldots,d-1\},\]
so that 
\[ \lmodule^{\Z_{d}} = \big\{ {\bf a} \in \mathsf{B}(\C^{d}) \, : \, {\bf{a}} = \sum_{m=0}^{M-1}\sum_{n=0}^{N-1}  \mathsf{a}(m,n) \, E_{mb}T_{na} , \ \mathsf{a}\in \C^{M\times N}\big\}\]
Observe that $UV = e^{2\pi i \alpha\beta} \, VU$ and $\tilde{U}\tilde{V} = e^{2\pi i \alpha\beta} \, \tilde{V}\tilde{U}$.

\begin{corollary} \label{cor:finite}
Let $\La$ be the lattice in $\R^{2}$ given by
\[ \Lambda = \begin{bmatrix} \alpha & 0 \\ 0 & \beta \end{bmatrix}\cdot\begin{bmatrix} \Z \\ \Z \end{bmatrix}, \ \ \alpha,\beta>0.\]
and let $(g_{j})$ and $(h_{j})$ be $n$-tuple in $\SO(\R)^{n}$.
Assume that $\gmlhs{(g_{j})}{(h_{j})}{\La}$ is an idempotent element of $\textnormal{M}_{n}(\lmodule^{\R})$, i.e., $\{\pi(\la)g_{j}\}_{\la\in\La,j\in\Z_{n}}$ and $\{\pi(\la)h_{j}\}_{\la\in\La,j\in\Z_{n}}$ are dual Gabor frames for $L^{2}(\R)$. If $\alpha$ and $\beta$ are such that $\alpha\beta = a/M=b/N$ for some $a,b,M,N\in\N$, we put $d=Mb(=aN)$, and let
\[ \tilde{\Lambda} = \begin{bmatrix} a & 0 \\ 0 & b \end{bmatrix}\cdot\begin{bmatrix} \{0,1,\ldots,N-1\} \\ \{0,1,\ldots,M-1\} \end{bmatrix} \subset \Z_{d} \times \Z_{d}, \]
then the $n$-tuple 
$(\tilde{g}_{j})$ and $(\tilde{h}_{j})$ in $(\C^{d})^{n}$ given by 
\[ \tilde{g}_{j}(t) = \sqrt{\alpha a^{-1}} \sum_{k\in\Z} g_{j}(\alpha a^{-1}(t-kd)), \ \ \tilde{h}_{j}(t) = \sqrt{\alpha a^{-1}} \sum_{k\in\Z} h_{j}(\alpha a^{-1}(t-kd)), \ \ t\in\{0,1,\ldots,d-1\}\] 
are such that $\gmlhs{(\tilde{g}_{j})}{(\tilde{h}_{j})}{\widetilde{\La}}$ is an idempotent element of $\textnormal{M}_{n}(\lmodule^{\Z_{d}})$, i.e., the two Gabor systems $\{\pi(\la) \tilde{g}_{j}\}_{\la\in\widetilde{\La},j\in\Z_{n}}$ and $\{\pi(\la) \tilde{h}_{j}\}_{\la\in\widetilde{\La},j\in\Z_{n}}$ are dual frames for $\C^{d}=\ell^{2}(\Z_{d})$. Moreover, canonical pairs of dual Gabor frames are preserved by this process. 
\end{corollary}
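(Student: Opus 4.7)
The plan is to deduce Corollary \ref{cor:finite} from a two-step application of the sampling and periodization theorems, following exactly the route suggested in the preceding text: first restrict from $\R$ to the subgroup $H_{1}=\alpha a^{-1}\Z$ via Theorem \ref{th:sampl-non-sep}, then periodize along the subgroup $H_{2}=d\alpha a^{-1}\Z$ of $H_{1}$ via Theorem \ref{th:period-non-sep}. Under the natural identification $\alpha a^{-1}\Z/d\alpha a^{-1}\Z\cong\Z_{d}$ given by $\alpha a^{-1}t\leftrightarrow t$, the composition of the restriction map $\restrict{H_{1}}$ and the periodization map $\period{H_{2}}$ is precisely the operator $g\mapsto\sum_{k\in\Z}g(\alpha a^{-1}(t-kd))$ appearing in the statement, so matching the formula in the corollary amounts to tracking the scalar normalizations through both steps.

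Concretely, the first step is just Corollary \ref{cor:ex1} with $d=1$, $H=\alpha a^{-1}\Z$, and the intermediate lattice
\[ \tilde{\Lambda}^{(1)}=\alpha\Z\times\{0,\beta,\ldots,(M-1)\beta\}\subset \alpha a^{-1}\Z\times[0,a\alpha^{-1}). \]
Its hypotheses are straightforward to check since $\alpha\beta=a/M$ forces $M\beta=a/\alpha$, so $\tilde{\Lambda}^{(1)}$ is cocompact in $H_1\times\widehat{H_1}$, and the inclusion $\Phi(\tilde{\Lambda}^{(1),\circ})\subseteq\Lambda^{\circ}$ follows from $1/\beta=M\alpha/a\in H_1$. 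The sampling constant is $c_{1}=(\alpha a^{-1})^{1/2}$, and the preserved pair is $(\tilde{g}_{j}^{(1)},\tilde{h}_{j}^{(1)})=(c_{1}\restrict{H_{1}}g_{j},c_{1}\restrict{H_{1}}h_{j})$ in $\ell^{1}(\alpha a^{-1}\Z)^{n}$. For the second step I apply Theorem \ref{th:period-non-sep} to this pair with $G'=\alpha a^{-1}\Z$, $H_{2}=d\alpha a^{-1}\Z$, and final lattice $\tilde{\Lambda}\subset\Z_{d}\times\Z_{d}$ as in the statement. The hypothesis $H_{2}\times\{0\}\subseteq(\tilde{\Lambda}^{(1)})^{\circ}$ holds because $d\alpha a^{-1}=b/\beta\in(1/\beta)\Z$, and using $d=Mb=aN$ one verifies that $\beta\in(a/(d\alpha))\Z$ and $\alpha\in(a/(d\alpha))\Z^{\circ}$ of $H_{2}$, so the inclusion $\Psi(\tilde{\Lambda}^{\circ})\subseteq(\tilde{\Lambda}^{(1)})^{\circ}$ is satisfied. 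A direct computation of the three covolumes appearing in the periodization constant yields $c_{2}=1$, hence the total constant is $c_{1}c_{2}=\sqrt{\alpha/a}$, matching the corollary.

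To complete the statement about canonical duals I need to verify conditions (ii*) of Proposition \ref{pr:canonical-preserved} and Lemma \ref{le:canonical-preserved}, i.e.\ the equalities $\Phi(\tilde{\Lambda}^{(1),\circ})=\Lambda^{\circ}\cap(\R\times K_{H_{1}^{\perp}})$ and $\Psi(\tilde{\Lambda}^{\circ})=(\tilde{\Lambda}^{(1)})^{\circ}\cap(K_{H_{2}}\times\widehat{G'})$. With the natural choices $K_{H_{1}^{\perp}}=[0,a\alpha^{-1})$ and $K_{H_{2}}=\{0,\alpha a^{-1},\ldots,(d-1)\alpha a^{-1}\}$, both identities reduce to counting coset representatives, and both hold because $\Lambda^{\circ}=\beta^{-1}\Z\times\alpha^{-1}\Z$ is itself a rectangular lattice compatible with the chosen representatives. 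Since Proposition \ref{pr:canonical-preserved} preserves canonical duality in the sampling step and Lemma \ref{le:canonical-preserved} does so in the periodization step, the composition does as well.

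The main obstacle, I expect, is purely bookkeeping: aligning the three different groups $\R$, $\alpha a^{-1}\Z$, $\Z_{d}$, their duals and adjoint lattices, the measure-theoretic conventions of Section \ref{sec:prelim}, and the two injections $\Phi,\Psi$ so that the hypotheses (i),(ii) (and their starred versions) of Theorems \ref{th:sampl-non-sep} and \ref{th:period-non-sep} are literally satisfied. Once the identifications are fixed, the actual formula for $\tilde{g}_{j}$ and the product of the two normalization constants follow from straightforward substitution, and the fact that the order of sampling and periodization is irrelevant can be checked by carrying out the analogous two-step argument with the roles reversed (periodize first along $N\beta\Z$ on the frequency side, then sample).
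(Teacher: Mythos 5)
Your proposal is correct and follows exactly the route the paper itself takes: the paper's proof of Corollary \ref{cor:finite} consists precisely of the remark that the result follows from Theorem \ref{th:sampl-non-sep} (sampling to $\alpha a^{-1}\Z$) followed by Theorem \ref{th:period-non-sep} (periodizing along $d\alpha a^{-1}\Z$), with the details left to the reader, and your verification of the intermediate lattice $\tilde{\Lambda}^{(1)}=\alpha\Z\times\beta\{0,\ldots,M-1\}$, the constants $c_{1}=(\alpha a^{-1})^{1/2}$, $c_{2}=1$, and the starred conditions for canonical duality supplies exactly those omitted details. No discrepancy with the paper's approach.
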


\subsection{Other examples}

In \cite{enjalu18} Gabor frames for $L^{2}(\R\times\mathbb{Q}_{p})$ and $L^{2}$ over the adeles were constructed. The sampling and periodization results from Theorem \ref{th:sampl-non-sep}
and \ref{th:period-non-sep} can be applied to this setting. Super (also called vector valued) Gabor frames \cite{ba00,grly09,jalu18} can also be sampled and periodized with these results. We leave these examples for elsewhere.

\section{Approximation of Heisenberg modules over irrational noncommutative tori}
\label{sec:from-discrete-to-continuous}

In this section we focus on the situation as described in the introduction, the rational non-commutative torus $\lmodule_{\theta}$, where $\theta$ is such that $\theta = a/M = b/N$ for some $a,b,M,N\in\N$ and we put $d= aN$. We use the results on generators of Heisenberg modules over rational noncommutative tori to approximate the generators of Heisenberg modules for irrational noncommutative tori. In particular, we show that representations of rational noncommutative tori on $\mathbb{C}^n$ yields approximations of generators in $S_0(\mathbb{R})$ by generators over finite-dimensional matrix algebras.

We consider its three different realizations as Banach algebras that act on $L^{2}(\R)$, $\ell^{2}(a^{-1}\Z)$ and $\ell^{2}(\Z/d\Z)\cong \C^{d}$, namely  $\lmodule_{\theta}^{\R}$, $\lmodule_{\theta}^{a^{-1}\Z}$ and $\lmodule_{\theta}^{\Z_{d}}$, respectively. In order to define them, we introduce the three lattices
\[ \Lambda = \Z\times\theta\Z \subset \R^{2}, \ \ \tilde{\Lambda} = \Z\times\theta \Z_{M} \subset a^{-1}\Z\times \R/a\Z, \ \ \tilde{\tilde{\Lambda}} = a\Z_{N} \times b\Z_{M}\subset (\Z/d\Z)^{2}.\]
We set
\begin{align*}
    \lmodule^{\R}_{\theta} & = \big\{ {\bf a} \in \mathsf{B}(L^{2}(\R)) \, : \, {\bf{a}} =  \sum_{\la\in \Lambda}  \mathsf{a}(\lambda) \, \pi(\lambda) \, , \ \mathsf{a}\in \ell^{1}(\Lambda)\big\}, \\
     \lmodule^{a^{-1}\Z}_{\theta} & = \big\{ {\bf a} \in \mathsf{B}(\ell^{2}(a^{-1}\Z)) \, : \, {\bf{a}} =  \sum_{\lambda \in\tilde{\La}} \mathsf{a}(\la) \, \pi(\la) \, , \ \mathsf{a}\in \ell^{1}(\tilde{\La}) \big\}, \\
     \lmodule^{\Z_d}_{\theta} & = \big\{ {\bf a} \in \mathsf{B}(\ell^{2}(\Z_{d})) \, : \, {\bf{a}} =   \sum_{\la\in \tilde{\tilde{\Lambda}}} \textsf{a}(\la) \, \pi(\la) \, , \ \mathsf{a}\in \ell^{1}(\La_{d})\big\},
\end{align*}
with the respective inner products
\begin{align*} \glhs{\,\cdot\,}{\,\cdot\,}{\La} & : \SO(\R)\times \SO(\R) \to \lmodule^{\R}_{\theta}, \ \glhs{f}{g}{\La} = \sum_{\la\in\La} \langle f, \pi(\la) g \rangle \, \pi(\la), \\
\glhs{\,\cdot\,}{\,\cdot\,}{\tilde{\La}} & : \ell^{1}(a^{-1} \Z)\times \ell^{1}(a^{-1}\Z) \to \lmodule^{a^{-1}\Z}_{\theta}, \ \glhs{f}{g}{\tilde{\La}} = \sum_{\la\in\tilde{\La}} \langle f, \pi(\la) g \rangle \, \pi(\la) ,\\
\glhs{\,\cdot\,}{\,\cdot\,}{\tilde{\tilde{\Lambda}}} & : \C^{d}\times \C^{d} \to \lmodule^{\Z_d}_{\theta}, \ \glhs{f}{g}{\tilde{\tilde{\Lambda}}} = \sum_{\la\in \tilde{\tilde{\Lambda}}} \langle f, \pi(\la) g \rangle \, \pi(\la).
\end{align*}
In a similar fashion we realize the non-commutative torus with parameter $\theta^{-1}$ as operators on either of the three spaces $L^{2}(\R)$, $\ell^{2}(a^{-1}\Z)$, and $\ell^{2}(\Z/d\Z)$. We define
\[ \Lac = \theta^{-1}\Z\times\Z \subset \R^{2} \ \ \tilde{\La}^{\circ} = \theta^{-1}\Z\times\Z_{M} \subset a^{-1}\Z\times \R/a\Z, \ \ \tilde{\tilde{\Lambda}}^{\circ} = M\Z_{b} \times N\Z_{a}\subset (\Z/d\Z)^{2}.\]
With those lattices we construct
\begin{align*}
    & \rmodule^{\R}_{1/\theta} = \big\{ {\bf b} \in \mathsf{B}(L^{2}(\R)) \, : \, {\bf{b}} =  \theta^{-1}\sum_{\lac\in\Lac}  \mathsf{b}(\lac) \, \pi(\lac)^{*} \, , \ \mathsf{b}\in \ell^{1}(\Lac)\big\}, \\
    & \rmodule^{a^{-1}\Z}_{1/\theta} = \big\{ {\bf b} \in \mathsf{B}(\ell^{2}(a^{-1}\Z)) \, : \, {\bf{b}} =  \theta^{-1}\sum_{\lac \in\tilde{\La}^{\circ}}  \mathsf{b}(\lac) \, \pi(\lac)^{*} \, , \ \mathsf{b}\in \ell^{1}(\tilde{\La}^{\circ}) \big\}, \\
    & \rmodule^{\Z_d}_{1/\theta} = \big\{ {\bf b} \in \mathsf{B}(\ell^{2}(\Z_{d})) \, : \, {\bf{b}} = \theta^{-1}  \sum_{\lac\in \tilde{\tilde{\Lambda}}^{\circ}} \textsf{b}(\lac) \, \pi(\lac)^{*} \, , \ \mathsf{b}\in \ell^{1}(\Lac_{d})\big\},
\end{align*}
with the respective inner products
\begin{align*} \grhs{\,\cdot\,}{\,\cdot\,}{\Lac} & : \SO(\R)\times \SO(\R) \to \rmodule^{\R}_{1/\theta}, \ \grhs{f}{g}{\R} = \theta^{-1} \sum_{\lac\in\Lac} \langle g, \pi(\lac)^{*} f \rangle \, \pi(\lac)^{*}, \\
\grhs{\,\cdot\,}{\,\cdot\,}{\tilde{\La}^{\circ}} & : \ell^{1}(a^{-1} \Z)\times \ell^{1}(a^{-1}\Z) \to \rmodule^{a^{-1}\Z}_{1/\theta}, \ \grhs{f}{g}{a^{-1}\Z} = \theta^{-1} \sum_{\lac\in\tilde{\La}^{\circ}} \langle g, \pi(\lac)^{*} f \rangle \, \pi(\lac)^{*} ,\\
\grhs{\,\cdot\,}{\,\cdot\,}{\tilde{\tilde{\Lambda}}^{\circ}} & : \C^{d}\times \C^{d} \to \rmodule^{\Z_d}_{1/\theta}, \ \grhs{f}{g}{\tilde{\tilde{\Lambda}}^{\circ}} = \theta^{-1} \sum_{\lac\in\tilde{\tilde{\Lambda}}^{\circ}} \langle g, \pi(\lac)^{*} f \rangle \, \pi(\lac)^{*}.
\end{align*}

These inner product are all such that
\begin{align*}
     \glhs{f}{g}{\La} \cdot h & = f \cdot \grhs{g}{h}{\Lac} \ \ \text{for all} \ \ f,g,h\in\SO(\R), \\
     \glhs{f}{g}{\tilde{\La}} \cdot h & = f \cdot \grhs{g}{h}{\tilde{\La}^{\circ}} \ \ \text{for all} \ \ f,g,h\in\ell^{1}(a^{-1}\Z), \\
     \glhs{f}{g}{\tilde{\tilde{\Lambda}}}\cdot h & = f \cdot \grhs{g}{h}{\tilde{\tilde{\Lambda}}^{\circ}} \ \ \text{for all} \ \ f,g,h\in\C^{d}\cong\ell^{2}(\Z_{d}).
\end{align*}

We equip the spaces of $\lmodule_{\theta}$- and $\rmodule_{1/\theta}$-valued $n\times n$ matrices with the inner products as defined in Section \ref{sec:frames-and-modules}. 

In the following, for $\gamma\in\R$ we let $\restrict{\gamma\Z}$ be the restriction operator that takes a function in $\SO(\R)$ to a sequence in $\ell^{1}(\gamma\Z)$ defined by $\big(\restrict{\gamma\Z}f\big)(k) = f(k)$, $k\in\gamma\Z$. 
Furthermore, for $\gamma\in\R\backslash\{0\}$, $d\in\N$  we let $\period{\gamma d\Z}$ be the operator
\[ \period{\gamma d\Z} : \ell^{1}(\gamma \Z) \to \C^{d}, \ \big(\period{\gamma d\Z}f\big)(t) = \sum_{k\in\gamma d\Z} f(\gamma t-k), \ \ t\in \{0,1,\ldots,d-1\}.\]

We now generalize Theorem \ref{th:intro} to the multi-generator case.
\begin{theorem} \label{th:intro-multi}
Let all notation be as above. If $(g_{j})$ is an $n$-tuple of functions in $\SO(\R)^{n}$ such that $\gmlhs{(g_{j})}{(g_{j})}{\La}$ is a projection in $\textnormal{M}_{n}(\lmodule^{\R}_{\theta})$, then the following holds.
\begin{enumerate}
    \item[(i)] The module norm of $(g_{j})$ satisfies
    \[ \Vert (g_{j}) \Vert_{\La} \le C := \theta^{-1} \sum_{j\in\Z_{n}} \sum_{\lac\in\theta^{-1}\Z\times\Z} \vert \langle g_{j}, \pi(\lac) g_{j}\rangle \vert.\]
    \item[(ii)] The $n$-tuple in $\ell^{1}(a^{-1}\Z)^{n}$ defined by $(\tilde{g}_{j})= ( a^{-1/2} \, \restrict{a^{-1} \Z} g_{j})$ is such that $\gmlhs{(\tilde{g}_{j})}{(\tilde{g}_{j})}{\tilde{\La}}$ is a projection in $\textnormal{M}_{n}(\lmodule^{a^{-1}\Z}_{\theta})$. Moreover, the module norm of $(\tilde{g}_{j})$ satisfies $\Vert (\tilde{g}_{j}) \Vert_{\tilde{\La}} \le C$.
    \item[(iii)] The $n$-tuple of vectors in $\C^{d}$ given by $(\tilde{\tilde{g}}_{j}) = (a^{-1/2} \period{a^{-1} d\Z} \restrict{a^{-1} \Z} g_{j})$ is such that $\gmlhs{(\tilde{\tilde{g}}_{j})}{(\tilde{\tilde{g}}_{j})}{\tilde{\tilde{\La}}}$ is a projection in $\textnormal{M}_{d}(\lmodule^{\Z_{d}}_{\theta})$. Moreover, the module norm of $(\tilde{\tilde{g}}_{j})$ satisfies $\Vert (\tilde{\tilde{g}}_{j}) \Vert_{\tilde{\tilde{\La}}} \le C$. 
\end{enumerate}
\end{theorem}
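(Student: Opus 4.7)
The plan is to reduce each of the three claims to results already established in the excerpt: namely Lemma \ref{le:so-implies-bessel} for the norm estimate, and the sampling and periodization theorems for the propagation of the projection property and the norm bound. Part (i) is nearly immediate: by Lemma \ref{le:so-implies-bessel}, the module norm of $(g_{j})$ equals the optimal Bessel bound and is therefore dominated by $B((g_{j}),\Lambda)$. Since $\Lambda = \Z\times\theta\Z$ has covolume $\textnormal{s}(\Lambda)=\theta$ in $\R^{2}$, and since $\pi(\lac)^{*}=\cocy(\lac,\lac)\,\pi(-\lac)$ combined with re-indexing $\lac\mapsto -\lac$ inside the group $\Lac = \theta^{-1}\Z\times\Z$ gives $\sum_{\lac}|\langle g_{j},\pi(\lac)^{*}g_{j}\rangle|=\sum_{\lac}|\langle g_{j},\pi(\lac)g_{j}\rangle|$, I find $B((g_{j}),\Lambda) = C$, and (i) follows.

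Part (ii) is obtained by applying Theorem \ref{th:sampl-non-sep} in the self-dual setting $(h_{j})=(g_{j})$, with $G=\R$ and $H=a^{-1}\Z$. First I would verify the hypotheses: the inclusion $\Lambda = \Z\times\theta\Z\subseteq a^{-1}\Z\times\R$ is immediate from $\Z\subset a^{-1}\Z$, and the adjoint containment $\Phi(\tilde\Lambda^{\circ})\subseteq \Lambda^{\circ}$ reduces to a direct computation using $\theta=a/M$. The normalizing constant works out to $c=\big(\textnormal{s}_{H\times\widehat H}(\tilde\Lambda)\,\textnormal{s}_{G}(H)/\textnormal{s}_{G\times\widehat G}(\Lambda)\big)^{1/2}=a^{-1/2}$, giving $\tilde g_{j}$ as in the statement. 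Lemma \ref{le:wex-raz}(iv) then forces $\gmlhs{(\tilde g_{j})}{(\tilde g_{j})}{\tilde\Lambda}$ to be idempotent; as a Gram-type matrix it is self-adjoint, hence a genuine projection. The ``moreover'' part of Theorem \ref{th:sampl-non-sep} finally yields $\Vert(\tilde g_{j})\Vert_{\tilde\Lambda}\le B((g_{j}),\Lambda)=C$.

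Part (iii) is handled analogously by applying Theorem \ref{th:period-non-sep} to the output of (ii), now with $G=a^{-1}\Z$ and $H=a^{-1}d\Z$. Verifying the hypotheses amounts to checking $\tilde\Lambda\subseteq a^{-1}\Z\times (a^{-1}d\Z)^{\perp}$ and $\Psi(\tilde{\tilde\Lambda}^{\circ})\subseteq \tilde\Lambda^{\circ}$. Composing the periodization with the preceding restriction, and combining the two scaling constants (the periodization step should yield a constant of modulus one against the $a^{-1/2}$ already produced in (ii), leaving a single factor of $a^{-1/2}$), produces exactly the formula $\tilde{\tilde g}_{j}=a^{-1/2}\period{a^{-1}d\Z}\restrict{a^{-1}\Z}g_{j}$. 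The projection property follows once again from Lemma \ref{le:wex-raz}(iv) and self-adjointness, and chaining the moreover estimates through both steps yields $\Vert(\tilde{\tilde g}_{j})\Vert_{\tilde{\tilde\Lambda}}\le B((\tilde g_{j}),\tilde\Lambda)\le B((g_{j}),\Lambda)=C$.

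The main obstacle will be the careful bookkeeping of the three distinct sets of Haar measures, covolumes, and adjoint groups as one moves between $\R$, $a^{-1}\Z$ and $\Z_{d}$, and in particular verifying the adjoint containments $\Phi(\tilde\Lambda^{\circ})\subseteq\Lambda^{\circ}$ and $\Psi(\tilde{\tilde\Lambda}^{\circ})\subseteq\tilde\Lambda^{\circ}$ with the explicit lattices given in the excerpt. A related delicate point is confirming that the periodization step in (iii) introduces no further scaling beyond the $a^{-1/2}$ already produced by the sampling step, so that the composed operator matches the clean formula stated in the theorem. Once these measure-theoretic computations are in place, the three claims follow mechanically from the general machinery.
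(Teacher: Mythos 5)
Your proposal is correct and follows essentially the same route as the paper, whose proof of this theorem is simply a citation of Theorem \ref{th:sampl-non-sep}, Theorem \ref{th:period-non-sep} and Corollary \ref{cor:finite}; you are filling in exactly the details that citation leaves implicit (the identification $B((g_j),\La)=C$ via Lemma \ref{le:so-implies-bessel} and the invariance of $\Lac$ under $\lac\mapsto-\lac$, the hypothesis checks, and the computation of the constants). The only added value beyond the paper's one-line argument is your explicit remark that self-adjointness of $\gmlhs{(g_j)}{(g_j)}{\La}$ upgrades ``idempotent'' to ``projection,'' which is a worthwhile clarification.
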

\begin{proof}
This follows from Theorem \ref{th:period-non-sep} and Theorem \ref{th:sampl-non-sep}. See Corollary \ref{cor:finite}.
\end{proof}

We now go a step further, and translate further results from the theory of Gabor frames and time-frequency analysis into the setting of the non-commutative tori considered here. This will show that it is also possible to go back from the discrete world into to the continuous one.
The hard work in order to establish the following result has already been done in  \cite{feka04, feka07, ka05}.

The way we will construct a function in $\SO(\R)$ from a sequence in $\ell^{1}$ is by use of linear interpolation.
For a given $\gamma>0$ let $\wedge_{\gamma}$ be the triangular-function
\[ \wedge_{\gamma}(x) = \big( 1 - \vert \gamma^{-1} x \vert  \big) \cdot \mathds{1}_{[-\gamma,\gamma]}. \]
Furthermore, for any $\gamma>0$ we define the operator
\[ Q^{\R}_{\gamma\Z} : \ell^{1}(\gamma\Z) \to \SO(\R), \ \big(Q^{\R}_{\gamma} c\big)(t) = \sum_{k\in\gamma\Z} c(k) \cdot \wedge_{\gamma}(t-k).\]
It is straight-forward to show that $Q^{\R}_{\gamma\Z}$ is well-defined, linear and bounded with $\Vert Q^{\R}_{\gamma\Z} \Vert_{\textnormal{op}} = \Vert \wedge_{\gamma} \Vert_{\SO}$. 
Observe that $Q^{\R}_{\gamma\Z}$ interpolates linearly in between the points $(k, c(k))_{k\in\gamma\Z}$. 

The procedure to turn a vector in $\C^{d}$ into a sequence in $\ell^{1}(\gamma \Z)$ is similar:
\[ Q^{\gamma\Z}_{d} : \C^{d} \to \ell^{1}(\gamma\Z), \ \big( Q^{\gamma\Z}_{d} c \big)(k) = \begin{cases} 0 & \text{if} \ k \notin\gamma\{-\lfloor \tfrac{d-1}{2} \rfloor, \ldots, \lfloor \tfrac{d}{2} \rfloor\} , \\ 
c(\gamma^{-1}k\ \text{mod} \ d)& \text{if} \ k \in \gamma \{-\lfloor \tfrac{d-1}{2} \rfloor, \ldots, \lfloor \tfrac{d}{2} \rfloor\}, \end{cases} \ \ k\in\gamma\Z.\]
Here $c\in\C^{d}$ is treated as a function that can be evaluated on the set $\{0,1,\ldots,d-1\}$.

We have the following important approximation results.
\begin{lemma}  \label{le:SO-R-approx-by-Z} Let all notation be as above. The following holds.
\begin{enumerate}
    \item[(i)] For any $f\in \SO(\R)$
\[ \lim_{\gamma\to 0} \big\Vert f - Q^{\R}_{\gamma\Z} \restrict{\gamma\Z} f\big\Vert_{\SO(\R)} = 0.\]
\item[(ii)] For any $f\in \SO(\R)$
\[ \lim_{n\to\infty} \big\Vert f - Q_{n^{-1/2}\Z}^{\R} Q^{n^{-1/2}\Z}_{n} \period{n^{1/2}\Z} \restrict{n^{-1/2}\Z} f \big\Vert_{\SO(\R)} = 0.\]
\end{enumerate}
\end{lemma}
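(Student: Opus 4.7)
For (i), the approach is to represent the linear interpolation as a multiplication-then-convolution and pass to the Fourier side. Concretely,
\begin{equation*}
Q^{\R}_{\gamma\Z}\restrict{\gamma\Z} f \;=\; \Bigl(f\cdot\sum_{k\in\gamma\Z}\delta_{k}\Bigr)*\wedge_{\gamma},
\end{equation*}
and taking Fourier transforms, combined with the Poisson summation formula (Lemma~\ref{le:SO-properties}(ix)) applied to the Dirac comb, together with $\hat{\wedge}_{\gamma}(\omega)=\gamma\operatorname{sinc}^{2}(\gamma\omega)$, yields
\begin{equation*}
\mathcal{F}\bigl[Q^{\R}_{\gamma\Z}\restrict{\gamma\Z} f\bigr](\omega) \;=\; \operatorname{sinc}^{2}(\gamma\omega)\,\sum_{k\in\Z}\hat{f}(\omega-k/\gamma).
\end{equation*}
Since the Fourier transform is a continuous bijection on $\SO$ (Lemma~\ref{le:SO-properties}(iii)), convergence in $\SO(\R)$ reduces to convergence of the right-hand side to $\hat{f}$ in $\SO(\widehat{\R})$. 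Splitting off the $k=0$ term leaves two pieces: the main-term discrepancy $\bigl[\operatorname{sinc}^{2}(\gamma\cdot)-1\bigr]\hat{f}$ and the ``aliasing'' sum $\operatorname{sinc}^{2}(\gamma\cdot)\sum_{k\neq 0}T_{k/\gamma}\hat{f}$. The first tends to zero by using that $\SO$ is a Banach algebra under pointwise multiplication (Lemma~\ref{le:SO-properties}(v)) and that $\operatorname{sinc}^{2}(\gamma\cdot)\to 1$ on compacts, handled via density of $\hat{f}$ with compact Fourier support together with a uniform boundedness argument. The second tends to zero because $\hat{f}\in\SO(\widehat{\R})$ lies in the Wiener amalgam $W(\mathcal{F}L^{1},L^{1})$ (cf.\ Lemma~\ref{le:SO-properties}(iv) and its proof), so the translates $T_{k/\gamma}\hat{f}$ scatter the mass of $\hat{f}$ to infinity and yield a summable, vanishing tail as $\gamma\to 0$.

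For (ii), I would decompose
\begin{equation*}
f - Q^{\R}_{n^{-1/2}\Z}Q^{n^{-1/2}\Z}_{n}\period{n^{1/2}\Z}\restrict{n^{-1/2}\Z} f \;=\; \bigl(f - Q^{\R}_{n^{-1/2}\Z}\restrict{n^{-1/2}\Z}f\bigr) + Q^{\R}_{n^{-1/2}\Z}\bigl[(I - Q^{n^{-1/2}\Z}_{n}\period{n^{1/2}\Z})\restrict{n^{-1/2}\Z} f\bigr].
\end{equation*}
The first summand vanishes in $\SO(\R)$ by part (i) with $\gamma=n^{-1/2}$. In the second summand, the operator $I-Q^{n^{-1/2}\Z}_{n}\period{n^{1/2}\Z}$ on $\ell^{1}(n^{-1/2}\Z)$ acts as the identity on coefficients outside the central window of $n$ consecutive points of $n^{-1/2}\Z$ (which covers approximately $[-n^{1/2}/2,\,n^{1/2}/2]$) and, inside the window, replaces each coefficient by its ``periodization fold-in'' $-\sum_{m\neq 0}f((k+mn)n^{-1/2})$. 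Both effects are controlled by the tail of $f$ at arguments of modulus greater than $n^{1/2}/2$. Since $f\in\SO(\R)$ has Wiener-amalgam summability (Lemma~\ref{le:SO-properties}(iv)), this tail vanishes as $n\to\infty$; combined with a scale-aware $\SO$-boundedness of $Q^{\R}_{n^{-1/2}\Z}$, convergence follows.

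The principal obstacle throughout is the $\SO$-norm control of the interpolation operator $Q^{\R}_{\gamma\Z}:\ell^{1}(\gamma\Z)\to\SO(\R)$ with a bound that scales compatibly with $\gamma\to 0$: the na\"{\i}ve estimate $\|Q^{\R}_{\gamma\Z} c\|_{\SO}\leq\|\wedge_{\gamma}\|_{\SO}\|c\|_{\ell^{1}}$ is too coarse for the tail argument in (ii), and controlling the aliasing summand in (i) requires more than pointwise decay of $\hat f$. The remedy is to use the Wiener-amalgam refinements, pairing a discrete amalgam tail bound on $\restrict{\gamma\Z}f$ with a correspondingly refined operator bound for $Q^{\R}_{\gamma\Z}$; this is precisely the discrete-to-continuous $\SO$-approximation machinery developed in \cite{feka04, feka07, ka05}, which I would invoke to complete both parts.
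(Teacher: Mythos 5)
Your proposal is correct and lands in essentially the same place as the paper: the paper's entire proof consists of citing \cite[Theorem 2.2]{feka07} for (i) and \cite[Proposition 3]{ka05} for (ii), which are precisely the results you invoke at the end to supply the scale-uniform amalgam estimates that your self-contained sketch (rightly) flags as the part the na\"ive bounds cannot close. Your Fourier-side aliasing decomposition for (i) and the truncation-plus-fold-in decomposition for (ii) are faithful to how those cited results are proved, so this is the same approach with the internals partially unpacked rather than a different route.
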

\begin{proof}
(i) is \cite[Theorem 2.2]{feka07}. (ii) is \cite[Proposition 3]{ka05}.
\end{proof}

Theorem \ref{th:intro-multi} shows that the samples of generator of a matrix valued projection in $\textnormal{M}_{n}(\lmodule^{\R}_{\theta})$ also generate a projection in $\textnormal{M}_{n}(\lmodule^{a^{-1}\Z}_{\theta})$. If these samples are dense enough, and linear interpolated to a function on $\R$ by the operator $Q^{\R}_{\gamma\Z}$, then the in this way constructed collections of functions generate a projection in  $\textnormal{M}_{n}(\lmodule^{\R}_{\theta})$ again.
\begin{theorem} \label{th:from-discrete-to-continuous} Let all notation and assumptions be as above. Let $(g_{j})$ be an $n$-tuple in $\SO(\R)^{n}$ such that $\gmlhs{(g_{j})}{(g_{j})}{\La}$ is a projection in $\textnormal{M}_{n}(\lmodule^{\R}_{\theta})$.
The following holds.
\begin{enumerate}
    \item[(i)] For all $a\in\N$ let $(\tilde{g}_{j})$ be the $n$-tuple in $\ell^{1}(a^{-1}\Z)^{n}$ given by 
    \[ (\tilde{g}_{j}) = ( \restrict{a^{-1} \Z} g_{j}). \] 
    For all $a$ that are sufficiently large, the $n$-tuple $(k_{j}) := (Q^{\R}_{a^{-1}\Z} \tilde{g}_{j})$ in $\SO(\R)^{n}$ is such that $\mathbf{b}_{(k_{j})} := \gmrhs{(k_{j})}{(k_{j})}{\Lac}\in \textnormal{M}_{n}(\rmodule^{\R}_{\theta^{-1}})$ is invertible on $L^{2}(\R)^{n}$ and on $\SO(\R)^{n}$. In that case, the $n$-tuple $(k_{j})\cdot \mathbf{b}_{(k_{j})}^{-1}\in \SO(\R)^{n}$ is such that $\gmlhs{(k_{j})}{(k_{j})\cdot \mathbf{b}_{(k_{j})}^{-1}}{\La}$ is a  projection in $\textnormal{M}_{n}(\lmodule^{\R}_{\theta})$.
    Furthermore, we also have that $(k_{j}) = (Q^{\R}_{a^{-1}\Z} \tilde{g}_{j})$ converges towards $(g_{j})$ in the module norm as $a\to\infty$, that is 
    \[\lim_{a\to\infty} \Vert (g_{j}) - (k_{j}) \Vert_{\La} = 0.\]
    \item[(ii)] For each $d\in\N$ let $(\tilde{\tilde{g}}_{j})$ be the $n$-tuple of vectors in $\C^{d}$ given by
\[ (\tilde{\tilde{g}}_{j}) = (\period{d^{1/2}\Z} \restrict{d^{-1/2} \Z} g_{j}).\]
For all $d$ that are sufficiently large, the $n$-tuple of functions $(k_{j})$ in $\SO(\R)^{n}$ given by
\[ (k_{j}):= ( Q_{d^{-1/2}\Z}^{\R} Q_{d}^{d^{-1/2}\Z} \tilde{\tilde{g}}_{j})\]
is such that $\mathbf{b}_{(k_{j})} := \gmrhs{(k_{j})}{(k_{j})}{\Lac}\in \textnormal{M}_{n}(\rmodule^{\R}_{\theta^{-1}})$ is invertible on $L^{2}(\R)^{n}$ and on $\SO(\R)^{n}$. In that case, the $n$-tuple $(k_{j})\cdot \mathbf{b}_{(k_{j})}^{-1}\in \SO(\R)^{n}$ is such that $\gmlhs{(k_{j})}{(k_{j})\cdot \mathbf{b}_{(k_{j})}^{-1}}{\La}$ is a  projection in $\textnormal{M}_{n}(\lmodule^{\R}_{\theta})$.
    Furthermore, we also have that $(k_{j})$ converges towards $(g_{j})$ in the module norm as $d\to\infty$, that is 
    \[\lim_{d\to\infty} \Vert (g_{j}) - (k_{j}) \Vert_{\La} = 0.\]

\end{enumerate}
\end{theorem}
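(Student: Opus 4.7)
The plan is to reduce everything to the single fact that $(k_{j}) \to (g_{j})$ in $\SO(\R)^{n}$, and then to leverage the projection hypothesis through Lemma \ref{le:wex-raz} to run a perturbation argument in the Banach algebra $\textnormal{M}_{n}(\rmodule^{\R}_{\theta^{-1}})$. Two ingredients do most of the work: the approximation statement of Lemma \ref{le:SO-R-approx-by-Z} supplies the requisite $\SO$-convergence directly, and the projection hypothesis pins the right frame operator $\mathbf{b}_{(g_{j})}$ down to be exactly the identity, leaving enough room for a Neumann series.

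First I would apply Lemma \ref{le:SO-R-approx-by-Z}(i) componentwise with $\gamma = a^{-1}$ in part (i), and Lemma \ref{le:SO-R-approx-by-Z}(ii) with $n=d$ in part (ii), to obtain $k_{j} \to g_{j}$ in $\SO(\R)$ as $a \to \infty$ (respectively $d \to \infty$). Since the map $\gmrhs{\,\cdot\,}{\,\cdot\,}{\Lac}$ is jointly continuous from $\SO(\R)^{n}\times\SO(\R)^{n}$ into the Banach algebra $\textnormal{M}_{n}(\rmodule^{\R}_{\theta^{-1}})$ (a consequence of Lemma \ref{le:SO-properties}(viii)), this $\SO$-convergence propagates to
\[ \mathbf{b}_{(k_{j})} \longrightarrow \mathbf{b}_{(g_{j})} \quad\text{in}\quad \textnormal{M}_{n}(\rmodule^{\R}_{\theta^{-1}}). \]
Next I would invoke the equivalence between clauses (ii) and (iv) of Lemma \ref{le:wex-raz} with $(h_{j})=(g_{j})$: the idempotence of $\gmlhs{(g_{j})}{(g_{j})}{\La}$ is equivalent to $\sum_{j}\grhs{g_{j}}{g_{j}}{\Lac}$ being the identity operator on $L^{2}(\R)$, which by the diagonal form of the $\textnormal{M}_{n}(\rmodule)$-valued inner product is precisely $\mathbf{b}_{(g_{j})} = I$. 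It follows that $\|\mathbf{b}_{(k_{j})} - I\| < 1$ for all sufficiently large $a$ (resp.\ $d$), and a Neumann series in $\textnormal{M}_{n}(\rmodule^{\R}_{\theta^{-1}})$ supplies $\mathbf{b}_{(k_{j})}^{-1}$ as an element of this Banach algebra, hence invertible on both $L^{2}(\R)^{n}$ and $\SO(\R)^{n}$.

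Setting $(h_{j}) := (k_{j}) \cdot \mathbf{b}_{(k_{j})}^{-1}$, the associativity identity \eqref{eq:associativity-for-mws} gives, for every $f \in \SO(\R)^{n}$,
\[ f \cdot \gmrhs{(k_{j})}{(h_{j})}{\Lac} = \gmlhs{f}{(k_{j})}{\La} \cdot (k_{j}) \cdot \mathbf{b}_{(k_{j})}^{-1} = f \cdot \mathbf{b}_{(k_{j})} \cdot \mathbf{b}_{(k_{j})}^{-1} = f, \]
so $\gmrhs{(k_{j})}{(h_{j})}{\Lac} = I$ and Lemma \ref{le:wex-raz} yields idempotence of $\gmlhs{(k_{j})}{(h_{j})}{\La}$. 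Selfadjointness (so that this idempotent is genuinely a projection) follows from the positivity of $\mathbf{b}_{(k_{j})}$, whence $\mathbf{b}_{(k_{j})}^{-1}$ is self-adjoint, together with the standard identity $\gmlhs{f \cdot \mathbf{b}}{g}{\La} = \gmlhs{f}{g \cdot \mathbf{b}^{*}}{\La}$ derived from the $*$-structure on the module inner product. The module norm convergence $\|(k_{j}) - (g_{j})\|_{\La} \to 0$ is in turn an immediate consequence of the $\SO$-continuity of $\gmlhs{\,\cdot\,}{\,\cdot\,}{\La}$ together with the inequality $\|\cdot\|_{\textnormal{op}} \le \|\cdot\|_{\lmodule}$. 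The principal obstacle to watch for is the perturbation step: the whole scheme depends on the \emph{sharp} equality $\mathbf{b}_{(g_{j})} = I$, not just on invertibility, and on the joint $\SO$-continuity of the module inner products---both facts are supplied by the earlier results, but they are the non-trivial inputs one has to identify to make the argument close.
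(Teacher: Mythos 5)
Your proposal is correct, and it reaches the conclusion by a route that differs from the paper's in the one step that carries the real weight: the invertibility of $\mathbf{b}_{(k_{j})}$ for large $a$ (resp.\ $d$). The paper gets this by citing the main stability result of Feichtinger--Kaiblinger \cite{feka04}: since $k_{j}\to g_{j}$ in $\SO(\R)$ and $(g_{j})$ generates a frame, $(k_{j})$ generates a frame for all sufficiently large $a$, which is then identified with invertibility of $\mathbf{b}_{(k_{j})}$ on $L^{2}(\R)^{n}$ \emph{and} on $\SO(\R)^{n}$ --- the latter passage implicitly resting on spectral-invariance results of Wiener-lemma type \cite{grle04}. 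You instead exploit the projection hypothesis through Lemma \ref{le:wex-raz} to pin down $\mathbf{b}_{(g_{j})}=I$ exactly, combine this with the joint $\SO$-continuity of $\gmrhs{\,\cdot\,}{\,\cdot\,}{\Lac}$ into the Banach algebra $\textnormal{M}_{n}(\rmodule^{\R}_{1/\theta})$ (which is what Lemma \ref{le:SO-properties}(vi)+(viii) really delivers, and is the same estimate the paper uses at the end for the module-norm convergence), and close with a Neumann series \emph{inside the algebra}. This buys you something: invertibility in $\textnormal{M}_{n}(\rmodule^{\R}_{1/\theta})$ immediately yields bounded invertibility on both $L^{2}(\R)^{n}$ and $\SO(\R)^{n}$ because the algebra acts boundedly on both spaces, so you never need the deeper frame-perturbation or spectral-invariance machinery. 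The trade-off is that your argument genuinely requires $\mathbf{b}_{(g_{j})}=I$ on the nose (i.e.\ the projection hypothesis), whereas the paper's appeal to \cite{feka04} would survive under the weaker assumption that $(g_{j})$ merely generates a frame; you correctly flag this dependency yourself. The remaining steps --- the associativity computation showing $\gmrhs{(k_{j})}{(k_{j})\cdot\mathbf{b}_{(k_{j})}^{-1}}{\Lac}=I$, self-adjointness via positivity of $\mathbf{b}_{(k_{j})}$, and the module-norm convergence estimate $\Vert (g_{j})-(k_{j})\Vert_{\La}^{2}\le c\sum_{j}\Vert g_{j}-k_{j}\Vert_{\SO}^{2}$ --- match the paper's proof in substance, up to your working with the $\La$-valued rather than the $\Lac$-valued inner product in the final estimate, which is immaterial since $\Vert\cdot\Vert_{\La}=\Vert\cdot\Vert_{\Lac}$.
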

\begin{proof}
(i). By Lemma \ref{le:SO-R-approx-by-Z} it follows that for each $j=\{0,1,\ldots,n-1\}$ the limit 
\[\lim_{a\to \infty} \Vert g_{j}-Q^{\R}_{a^{-1}\Z}\restrict{a^{-1}\Z}g_{j}\Vert_{\SO(\R)} = 0.\] The main result by Feichtinger and Kaiblinger in \cite{feka04}, states that for all $a$ sufficiently large the frame property of the Gabor system generated by $(g_{j})$ and the time-frequency shifts from $\La=\Z\times\theta \Z$ implies that also the Gabor system generated by the defined $n$-tuple $(k_{j})$ and $\La$ is a frame for $L^{2}(\R)$. This is equivalent to the fact that $\mathbf{b}_{(k)_{j}}$ is invertible as an operator on $L^{2}(\R)^{n}$ and $\SO(\R)^{n}$. That $\gmlhs{(k_{j})}{(k_{j})\cdot \mathbf{b}_{(k_{j})}^{-1}}{\La}$ is a projection is easily verified with the properties of the $\textnormal{M}_{n}(\lmodule_\theta^{\R})$-valued inner product (use the compatibility with the left and right action from $\textnormal{M}_{n}(\lmodule_\theta^{\R})$ and $\textnormal{M}_{n}(\rmodule_{1/\theta}^{\R})$ and the fact that $\mathbf{b}_{(k_{j})}$ is self-adjoint). 
Finally, we have the estimates

\begin{align*}
    & \Vert (g_{j}) - (k_{j}) \Vert_{\La}^{2} = \Vert (g_{j}) - (k_{j}) \Vert_{\Lac}^{2} = \big\Vert \gmrhs{(g_{j}-k_{j})}{(g_{j}-k_{j})}{\Lac} \big\Vert_{\textnormal{op},L^{2}} \\
    & \le \theta^{-1} \sum_{j\in\Z_{n}}\sum_{\lac\in\Lac} \big\vert \big\langle g_{j}-k_{j} \, , \, \pi(\lac)^{*} g_{j}- \pi(\lac)^{*} k_{j} \big\rangle_{L^{2}(\R)} \big\vert \\ 
    & = \theta^{-1} \sum_{j\in\Z_{n}}\sum_{\lac\in\Lac} \big\vert \mathcal{V}_{g_{j}-k_{j}} (g_{j}-k_{j})(\lac)\big\vert \\
    & \le c \, \sum_{j\in\Z_{n}} \Vert g_{j}-k_{j}\Vert_{\SO}^{2}  = c \, \sum_{j\in\Z_{n}} \Vert g_{j}-Q^{\R}_{a^{-1}\Z} \restrict{a^{-1}\Z} {g}_{j}\Vert_{\SO}^{2},
\end{align*}
for some $c>0$ that does not depend on $a$ or $(g_{j})$. The first inequality follows from Lemma \ref{le:so-implies-bessel}. The second inequality follows from Lemma \ref{le:s0-properties}(vi)+(viii). 
All that is left is to refer to Lemma \ref{le:SO-R-approx-by-Z} to conclude that the last term is indeed converging to zero as $a$ becomes larger. We conclude that $\lim_{a\to\infty} \Vert (g_{j}) - (k_{j}) \Vert_{\La} = 0$ as desired.

(ii). The reasoning for this statement is similar, we just use the second statement of Lemma \ref{le:SO-R-approx-by-Z}, rather than the first. 
\end{proof}

The theory of quantum Gromov-Hausdorff distance \cite{ri04-1} provides a notion of ``closeness'' between two $C^*$-algebras and thus provides a way to formalize the convergence of a sequence of $C^*$-algebras $(A_n)_{n\in\mathbb{N}}$  to a $C^*$-algebra $B$. In many problems one wants to approximate $B$ by finite-dimensional $C^*$-algebras $(A_n)_{n\in\mathbb{N}}$ like in the case of noncommutative tori. In addition, one is often interested in the behaviour of finitely generated projective modules over $A_n$ and if it in the ``limit'' it converges to a finitely generated projective module over $B$. The definition of a distance between modules over two $C^*$-algebras has been recently addressed in \cite{la16-3,ri18-1} and Latremoliere has worked out the case of Heisenberg modules over noncommutative 2-tori in \cite{la17-1,la18}. Here we complement these results by pointing out that if one uses that Heisenberg modules are finitely generated and projective over noncommutative tori, then one can study the behaviour of sequences of Heisenberg modules converging to a Heisenberg modules by understanding what is going on for the generators. 

Finally, we mention the following. Observe that Theorem \ref{th:intro-multi} only applies to rational $\theta$. If we translate results from \cite{feka04} and \cite{grle04} from statements of Gabor analysis into statements of the non-commutative torus, then we find the following.
\begin{theorem} \label{th:intro-irr}
Let all notation be as above. Furthermore, for a given $\tilde{\theta}$ 
we put $\La_{\tilde{\theta}} = \Z\times\tilde{\theta}\Z$ and $\La_{\tilde{\theta}}^{\circ} = \tilde\theta^{-1}\Z\times\Z$. If $(g_{j})$ is an $n$-tuple in $\SO(\R)^{n}$ such that $\gmlhs{(g_{j})}{(g_{j})}{\La}$, $\La=\Z\times\theta\Z$, is a projection in $\textnormal{M}_{n}(\lmodule^{\R}_{\theta})$ for some irrational $\theta$, then, for every rational $\tilde{\theta}$ that is sufficiently close to $\theta$, the element $\mathbf{b}_{(g_{j})} := \gmrhs{(g_{j})}{(g_{j})}{\Lac_{\tilde{\theta}}}\in \textnormal{M}_{n}(\rmodule^{\R}_{\tilde{\theta}})$ is invertible as an operator on $L^{2}(\R)^{n}$ and on $\SO(\R)^{n}$. Moreover, the element $(h_{j}):=(g_{j}) \cdot \mathbf{b}_{(g_{j})}
    ^{-1}$ in $\SO(\R)^{n}$ is such that  $\gmlhs{(g_{j})}{(h_{j})}{\La_{\tilde{\theta}}}$ is a projection in $\textnormal{M}_{n}(\lmodule^{\R}_{\tilde{\theta}})$.
 \end{theorem}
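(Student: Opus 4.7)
The hypothesis, by applying Lemma \ref{le:wex-raz} with $(h_j)=(g_j)$, is equivalent to the assertion that $(g_j)$ generates a Parseval multi-window Gabor frame for $L^2(\R)$ with respect to $\Lambda = \Z\times\theta\Z$: indeed, idempotency of $\gmlhs{(g_j)}{(g_j)}{\Lambda}$ is, by item (ii) of that lemma, the same as $\sum_j \grhs{g_j}{g_j}{\Lambda^\circ} = \textnormal{Id}_{L^2(\R)}$. The plan is then to transfer this frame property to the perturbed lattice $\Lambda_{\tilde\theta}=\Z\times\tilde\theta\Z$ and convert it into the desired projection identity.

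First I would invoke the stability theorem of Feichtinger--Kaiblinger \cite{feka04} (already used in the proof of Theorem \ref{th:from-discrete-to-continuous} in the multi-window form): if $(g_j) \in \SO(\R)^n$ generates a Gabor frame for $L^2(\R)$ with respect to $\Lambda$, then the same $n$-tuple generates a Gabor frame with respect to $\Lambda_{\tilde\theta}$ whenever $|\tilde\theta-\theta|$ is sufficiently small. Next I would translate this into the module language. The multi-window frame operator for $\{\pi(\lambda)g_j\}_{\lambda\in\Lambda_{\tilde\theta},j\in\Z_n}$ acts on $f \in L^2(\R)$ as $f \mapsto f \cdot \bigl(\sum_{j} \grhs{g_j}{g_j}{\Lambda_{\tilde\theta}^\circ}\bigr)$, and the diagonal matrix $\mathbf{b}_{(g_j)} = \gmrhs{(g_j)}{(g_j)}{\Lambda_{\tilde\theta}^\circ}$ has precisely this operator on each diagonal entry. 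Hence invertibility of $\mathbf{b}_{(g_j)}$ on $L^2(\R)^n$ is exactly the frame property. Simultaneous invertibility on $\SO(\R)^n$ follows from Gr\"ochenig--Leinert \cite{grle04} (cited in Section \ref{sec:frames-and-modules}): the frame operator of an $\SO$-generated Gabor frame is invertible on $\SO$ whenever it is invertible on $L^2$. In particular, $(h_j):=(g_j)\cdot\mathbf{b}_{(g_j)}^{-1}$ is a well-defined $n$-tuple in $\SO(\R)^n$.

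Finally, by construction $\sum_j \grhs{g_j}{h_j}{\Lambda_{\tilde\theta}^\circ} = \textnormal{Id}_{L^2(\R)}$, so $(g_j),(h_j)$ form a dual multi-window Gabor frame pair, and Lemma \ref{le:wex-raz} then gives that $\mathbf{p}:=\gmlhs{(g_j)}{(h_j)}{\Lambda_{\tilde\theta}}$ is idempotent in $\textnormal{M}_n(\lmodule^\R_{\tilde\theta})$. To upgrade idempotency to a projection (i.e., to establish self-adjointness) I would use that $\mathbf{b}_{(g_j)}$, hence $\mathbf{b}_{(g_j)}^{-1}$, is self-adjoint in $\textnormal{M}_n(\rmodule^\R_{\tilde\theta})$, together with the bimodule identity $\glhs{f}{g\cdot\mathbf{b}}{\Lambda_{\tilde\theta}} = \glhs{f\cdot\mathbf{b}^*}{g}{\Lambda_{\tilde\theta}}$ (a consequence of the associativity \eqref{eq:2302c}), which forces the $(j,k)$-entry of $\mathbf{p}$ to equal the operator-adjoint of its $(k,j)$-entry. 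The main technical point to verify is that the Feichtinger--Kaiblinger stability applies uniformly across the multi-window setting with $n\ge 2$; if a direct reference is not isolated in the literature, one can reduce to the single-window statement by stacking $(g_j)$ into a single super-window on $L^2(\R;\C^n)$, which preserves $\SO$-membership and transforms the multi-window frame property into a vector-valued one.
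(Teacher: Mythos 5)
Your proof is correct and follows essentially the same route as the paper's: the Feichtinger--Kaiblinger lattice-perturbation theorem gives the frame property for $\La_{\tilde\theta}$, which is equivalent to invertibility of $\mathbf{b}_{(g_j)}$ on $L^2(\R)^n$ and (by Gr\"ochenig--Leinert) on $\SO(\R)^n$, and the projection property then follows from self-adjointness of $\mathbf{b}_{(g_j)}$ together with the module identities. You supply more detail than the paper, which compresses the projection step into a reference to the argument in the proof of Theorem \ref{th:from-discrete-to-continuous}, and your remark on reducing multi-window lattice stability to the single-window case via super-windows addresses a point the paper leaves implicit.
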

\begin{proof}
It follows from the main result of \cite{feka07} that every rational $\tilde{\theta}$ that is close enough to $\theta$ is such that the functions $(g_{j})$ generate a multi-window Gabor frame. Equivalently, the element $\mathbf{b}_{(g_{j})}\in \textnormal{M}_{n}(\rmodule^{\R}_{\tilde{\theta}})$ is invertible on $L^{2}(\R)^{n}$ and on $\SO(\R)^{n}$. Similar as in the proof of Theorem \ref{th:intro-multi} we show that $\gmlhs{(g_{j})}{(g_{j})\cdot \mathbf{b}_{(g_{j})}^{-1}}{\La_{\tilde{\theta}}}$ is a projection.
\end{proof}

Using Theorem \ref{th:intro-irr} it is therefore possible to go from the irrational to the rational case and subsequently apply Theorem \ref{th:intro-multi} and or Theorem \ref{th:from-discrete-to-continuous}. 

\newcommand{\ind}{i} 

\begin{proposition} \label{pr:2711}
 Let $\theta$ be irrational and take $(g_{j})$ to be an $n$-tuple in $\SO(\R)^{n}$ such that $\gmlhs{(g_{j})}{(g_{j})}{\La}$, $\La = \Z\times\theta\Z$, is a projection in $\textnormal{M}_{n}(\lmodule^{\R}_{\theta})$. If $(\theta_{\ind})_{\ind\in\N}$ is a sequence of rational numbers such that $\lim_{\ind\to\infty} \vert \theta - \theta_{\ind}\vert = 0$, $\La_{\ind}=\Z\times\theta_{\ind}\Z$, and $(a_{\ind})$, $(b_{\ind})$, $(N_{\ind})$ and $(M_{\ind})$ are sequences of natural numbers such that $\theta_{\ind} = a_{\ind}/M_{\ind} = b_{\ind}/N_{\ind}$ for all $\ind\in\N$ and such that the sequence $(d_{\ind})=(a_{\ind}\cdot N_{\ind})$ is increasing and unbounded, then the following holds.
 \begin{enumerate}
     \item[(i)] For all $\ind$ that are sufficiently large the $n$-tuple of vectors in $\C^{d_{\ind}}$ given by 
     \begin{align*} (\tilde{\tilde{g}}_{j}) & = (a^{-1/2}_{\ind} \period{a^{-1}_{\ind} d_{\ind}\Z} \restrict{a^{-1}_{\ind} \Z} g_{j}) \\
     \tilde{\tilde{g}}_{j}(t) & = a_{\ind}^{-1/2} \sum_{k\in\Z} g_{j}(a_{\ind}t+a_{\ind}d_{\ind} k), \ \ t\in\{0,1,\ldots,d_{\ind}\}, \ j\in\{0,1,\ldots,n-1\}, \end{align*}
     is such that $\gmlhs{(\tilde{\tilde{g}}_{j})}{(\tilde{\tilde{g}}_{j})}{\tilde{\tilde{\La}}}$, $\tilde{\tilde{\La}}=a_{\ind} \Z_{N_{\ind}} \times b_{\ind} \Z_{M_{\ind}}\subset \Z_{d_{\ind}}^{2}$ is a projection in $\textnormal{M}_{d_{\ind}}(\lmodule^{\Z_{d_{\ind}}}_{\theta_{\ind}})$.
     \item[(ii)] For all $\ind$ that are sufficiently large the $n$-tuple of functions in $\SO(\R)^{n}$ given by
     \begin{align*}
         & (k_{j}^{(\ind)}) = ( a_{\ind}^{1/2} Q_{d_{\ind}^{-1/2}\Z}^{\R} Q_{d_{\ind}}^{d_{\ind}^{-1/2}\Z} \tilde{\tilde{g}}_{j})
     \end{align*} 
     
is such that $\mathbf{b}_{(k_{j}^{(\ind)})} := \gmrhs{(k_{j}^{(\ind)})}{(k_{j}^{(\ind)})}{\Lac_{\ind}}\in \textnormal{M}_{n}(\rmodule^{\R}_{\theta_{\ind}^{-1}})$ is invertible on $L^{2}(\R)^{n}$ and on $\SO(\R)^{n}$. In that case, the $n$-tuple $(k_{j}^{(\ind)})\cdot \mathbf{b}_{(k_{j}^{(\ind)})}^{-1}\in \SO(\R)^{n}$ is such that $\gmlhs{(k_{j}^{(\ind)})}{(k_{j}^{(\ind)})\cdot \mathbf{b}_{(k_{j}^{(\ind)})}^{-1}}{\La_{\ind}}$ is a  projection in $\textnormal{M}_{n}(\lmodule^{\R}_{\theta_{\ind}})$. 
     Furthermore, we also have that $(k_{j}^{(\ind)})$ converges towards $(g_{j})$ in the module norm as $\ind\to\infty$, that is     \[\lim_{\ind\to\infty} \Vert (g_{j}) - (k_{j}^{(\ind)}) \Vert_{\La} = 0.\]
 \end{enumerate}
\end{proposition}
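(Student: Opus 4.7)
The plan is to stitch together the three main technical ingredients of this section: Theorem~\ref{th:intro-irr} handles the transition from the irrational parameter $\theta$ to a nearby rational $\theta_\ind$; Theorem~\ref{th:intro-multi} handles the descent from the continuous rational model at $\theta_\ind$ to the finite-dimensional one at $\theta_\ind$; Theorem~\ref{th:from-discrete-to-continuous} together with Lemma~\ref{le:SO-R-approx-by-Z}(ii) handles the subsequent ascent back to $\SO(\R)^n$ with module-norm convergence. The coupling $d_\ind = a_\ind N_\ind \to \infty$ is forced by the assumption on the denominators, and $|\theta_\ind - \theta| \to 0$ by hypothesis, so the two limits can be engaged in tandem.

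For part~(i), I would first invoke Theorem~\ref{th:intro-irr} at each sufficiently large $\ind$: the element $\mathbf{b}^{(\ind)} := \gmrhs{(g_j)}{(g_j)}{\La_\ind^\circ}$ is invertible in $\textnormal{M}_n(\rmodule^\R_{1/\theta_\ind})$, so $(h_j^{(\ind)}) := (g_j)\cdot(\mathbf{b}^{(\ind)})^{-1}$ makes $\gmlhs{(g_j)}{(h_j^{(\ind)})}{\La_\ind}$ a projection in $\textnormal{M}_n(\lmodule^\R_{\theta_\ind})$. Since $\theta_\ind = a_\ind/M_\ind = b_\ind/N_\ind$ is now rational with $d_\ind = a_\ind N_\ind$, Theorem~\ref{th:intro-multi} applied to this dual pair, together with Proposition~\ref{pr:canonical-preserved} to preserve the canonical-dual relation across the sampling and periodization, yields the projection claim in $\textnormal{M}_{d_\ind}(\lmodule^{\Z_{d_\ind}}_{\theta_\ind})$ for the sampled-and-periodized generators.

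For part~(ii), I would interpolate $(\tilde{\tilde g}_j)$ back through $Q^{\R}_{d_\ind^{-1/2}\Z}$ and $Q^{d_\ind^{-1/2}\Z}_{d_\ind}$ to obtain $(k_j^{(\ind)}) \in \SO(\R)^n$. A scale-adjusted form of Lemma~\ref{le:SO-R-approx-by-Z}(ii) yields $\Vert g_j - k_j^{(\ind)} \Vert_{\SO(\R)} \to 0$ as $\ind \to \infty$, after which Theorem~\ref{th:from-discrete-to-continuous}(ii) applied at $\theta_\ind$ certifies the invertibility of $\mathbf{b}_{(k_j^{(\ind)})}$ in $\textnormal{M}_n(\rmodule^\R_{1/\theta_\ind})$ and the projection property of $\gmlhs{(k_j^{(\ind)})}{(k_j^{(\ind)})\cdot\mathbf{b}_{(k_j^{(\ind)})}^{-1}}{\La_\ind}$. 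For the module-norm convergence, I would re-run the chain of inequalities in the proof of Theorem~\ref{th:from-discrete-to-continuous}: bound $\Vert (g_j) - (k_j^{(\ind)}) \Vert_{\La}^2$ via Lemma~\ref{le:so-implies-bessel} by a sum of values $|\mathcal{V}_{g_j - k_j^{(\ind)}}(g_j - k_j^{(\ind)})(\lac)|$ over $\lac \in \La^\circ$, and then by Lemma~\ref{le:SO-properties}(viii) bound that by $c \sum_j \Vert g_j - k_j^{(\ind)} \Vert_{\SO}^2$, which tends to zero.

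The principal obstacle is the simultaneous coordination of the two limiting procedures: the stability thresholds underpinning both Theorem~\ref{th:from-discrete-to-continuous} and Theorem~\ref{th:intro-irr} depend on $(g_j)$ and on the parameter, so one must verify that ``$|\theta_\ind - \theta|$ small'', ``$d_\ind$ large'', and ``$(k_j^{(\ind)})$ close to $(g_j)$ in $\SO$'' can all be arranged for the same index $\ind$, if necessary after passage to a subsequence. A secondary, milder point is that the scales $a_\ind^{-1}$ and $a_\ind^{-1} d_\ind$ used in the statement do not literally match the $n^{-1/2}$, $n^{1/2}$ pair in Lemma~\ref{le:SO-R-approx-by-Z}(ii), so that lemma has to be applied in a slightly generalised form obtained by a routine modification of the arguments in \cite{feka07, ka05}.
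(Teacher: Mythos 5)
Your proposal follows essentially the same route as the paper's own (very terse) proof: Theorem~\ref{th:intro-irr} to pass from the irrational $\theta$ to the nearby rationals $\theta_{i}$, the sampling/periodization machinery for part~(i), and Lemma~\ref{le:SO-R-approx-by-Z} combined with the Feichtinger--Kaiblinger stability result and the module-norm estimate from the proof of Theorem~\ref{th:from-discrete-to-continuous} for part~(ii). You are in fact more explicit than the paper about the two delicate points --- the coordination of the limits $\vert\theta_{i}-\theta\vert\to 0$ and $d_{i}\to\infty$ for a common index, and the mismatch between the sampling scale $a_{i}^{-1}$ and the $d_{i}^{-1/2}$ grid of Lemma~\ref{le:SO-R-approx-by-Z}(ii) --- which the paper's proof simply glosses over.
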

\begin{proof}
(i). This follows by Theorem \ref{th:intro-irr} together with Theorem \ref{th:from-discrete-to-continuous}(ii). \\
(ii). Since $(d_{\ind})$ is increasing and unbounded, it follows from Lemma \ref{le:SO-R-approx-by-Z} that for each $j$ the function $k_{j}^{(i)}$ converges towards $g_{j}$ in the $\SO$-norm. Hence, as in the proof of Theorem \ref{th:from-discrete-to-continuous} and \ref{th:intro-irr}, we conclude together with the main result of \cite{feka07} that the $n$-tuple $(k_{j}^{(i)})$ has all the desired properties with the convergence in the module norm.
\end{proof}

Let us note that the sequence of rational noncommtuative tori $(A_{\theta_{\ind}})$ converges in the quantum Gromov-Hausdorff distance to $A_\theta$, see \cite{ri04-1}. Hence Proposition \ref{pr:2711} shows that Heisenberg modules over $(A_{\theta_{\ind}})$  are also close to Heisenberg modules over $A_\theta$ in the Heisenberg module norm. 

\subsection*{Acknowledgments}
The work of M.S.J.\ was carried out during the tenure of the ERCIM ``Alain Bensoussan'' Fellowship Programme at NTNU. This project was completed while both authors were visiting the Faculty of Mathematics at the University of Vienna and the second author also acknowledges the hospitality of the Erwin Schr\"odinger Insitute while attending the program on ``Bi-Variant K-theory and its applications to physics''. The first author thanks Jordy T. van Velthoven for discussions on the embeddings in Remark \ref{rem:1901} and \ref{rem:1902}.

\bibliographystyle{abbrv}

\end{document}